\definecolor{darkgreen}{rgb}{0,0.5,0}
\definecolor{darkblue}{rgb}{0,0,0.7}
\definecolor{darkred}{rgb}{0.9,0.1,0.1}
\newtheorem{proposition}{Proposition}
\newtheorem{theorem}[proposition]{Theorem}
\newtheorem{lemma}[proposition]{Lemma}
\newtheorem{corollary}[proposition]{Corollary}
\theoremstyle{definition}
\newtheorem{remark}[proposition]{Remark}
\newcommand{\cref}[1]{Corollary~\ref{c.#1}}
\numberwithin{equation}{section}
\numberwithin{proposition}{section}
\newcommand{\N}{\mathbb{N}}
\newcommand{\R}{\mathbb{R}}
\newcommand{\E}{\mathbb{E}}
\renewcommand{\P}{\mathbb{P}}
\newcommand{\Zd}{\mathbb{Z}^d}
\newcommand{\Rd}{{\mathbb{R}^d}}
\newcommand{\eps}{\varepsilon}
\renewcommand{\a}{\mathbf{a}}
\newcommand{\g}{\mathbf{g}}
\newcommand{\f}{\mathbf{f}}
\renewcommand{\subset}{\subseteq}
\renewcommand{\a}{\mathbf{a}}
\newcommand{\ahom}{{\overline{\mathbf{a}}}}
\newcommand{\chom}{{\overline{c}}}
\renewcommand{\subset}{\subseteq}
\newcommand{\cu}{{\scaleobj{1.2	}{\square}}}
\renewcommand{\L}{\mathcal{L}}
\renewcommand{\fint}{\strokedint}
\DeclareMathOperator{\var}{var}
\renewcommand{\bar}{\overline}
\renewcommand{\tilde}{\widetilde}
\newcommand{\indc}{\mathds{1}}
\renewcommand{\hat}{\widehat}
\begin{document}
\title{Local central limit theorem for gradient field models}

\begin{abstract}
We consider the gradient field model in $\left[ -N,N\right]
^{2}\cap \mathbb{Z}^{2}$ with a uniformly convex interaction potential.   Naddaf-Spencer \cite{NS} and Miller \cite{Mi} proved that the macroscopic averages of linear statistics of the field converge to a continuum Gaussian free field. In this paper we prove the distribution of $\phi(0)/\sqrt{\log N}$ converges uniformly in $\R$ to a Gaussian density,  with a Berry-Esseen type bound. This implies the distribution of $\phi(0)$ is sufficiently `Gaussian like' between $[-\sqrt {\log N}, \sqrt {\log N}]$. 
\end{abstract}

\author[W. Wu]{Wei Wu}
\address[W. Wu]{Mathematics Department, NYU Shanghai \& NYU-ECNU Institute of Mathematical Sciences,  China}
\email{wei.wu@nyu.edu}
\maketitle

\section{Introduction}

In this paper we study  a two dimensional
 gradient interface field with a nearest neighbor potential $V$.  Explicitly, let $Q_{N}:=\left[ -N,N\right]
^{2}\cap \mathbb{Z}^{2}$ and let the boundary $\partial Q_{N}$ consist of
the vertices in $Q_{N}$ that are connected to $\mathbb{Z}^{2}\setminus Q_{N}$
by an edge. The gradient field on $Q_{N}$ with zero boundary
condition is a random field denoted by $\phi ^{Q_{N},0}$, whose distribution
is given by the Gibbs measure 
\begin{equation}
d\mu _{N}=Z_{N}^{-1}\exp \left[ -\sum_{v\in Q_{N}}\sum_{i=1}^{2}V\left(
\nabla _{i}\phi \left( v\right) \right) \right] \prod_{v\in Q_{N}\backslash
\partial Q_{N}}d\phi \left( v\right) \prod_{v\in \partial Q_{N}}\delta
_{0}\left( \phi \left( v\right) \right) ,  \label{e.GL}
\end{equation}%
where $\nabla _{i}\phi \left( v\right) =\phi \left( v+e_{i}\right) -\phi
\left( v\right) $, {$e_{1}=(1,0)$ and $e_{2}=(0,1)$}, and we set $\phi
\left( v\right) =0$ for all $v\in \mathbb{Z}^{2}\setminus Q_{N}$. Here $%
Z_{N} $ is the normalizing constant ensuring that $\mu _{N}$ is a
probability measure, i.e. $\mu _{N}(\mathbb{R}^{|Q_{N}|})=1$. We denote
expectation and variance with respect to $\mu _{N}$ by $\langle \cdot \rangle_{\mu_N}$ and $\var_{\mu_N}$,
respectively.

We assume the interaction potential $V:\R\to\R$ in \eqref{e.GL} satisfies the following:
\begin{enumerate}

\item[(i)]\emph{Symmetry}: for every $t\in\R$, we have $V(t) = V(-t)$.

\smallskip

\item[(ii)] \emph{Uniform convexity}: for every $t\in\R$, we have $0< \lambda \leq V''(t) \leq \Lambda<\infty$.

\smallskip

\item[(iii)] \emph{Regularity}: $V\in C^{2,1}(\R)$.  In other words,  $\mathsf V''$ is Lipschitz continuous with Lipschitz constant $L$.   
\end{enumerate}

\smallskip
The Gibbs measure \eqref{e.GL} was introduced in the 1970s by Brascamp, Lieb and Lebowitz \cite{BLL},  in the name of anharmonic crystals.  Since then, numerous efforts have been made to study the large-scale (macroscopic) statistical behavior of the field $\nabla \phi$.  Notable progress was made by Naddaf and Spencer \cite{NS},  who studied the infinite volume limit of the Gibbs measure \eqref{e.GL} (the infinite volume Gibbs states were rigorously characterized by Funaki and Spohn \cite{FS}),  and proved a central limit for (rescaled) linear functions of $\nabla \phi$.  More precisely,  they consider, for $R\geq 1$, the random variable
\begin{equation}
F_R(\nabla \phi) := R^{-\frac d2}\sum_{x\in\Zd} \sum_{i=1}^d f_i\left(\frac xR\right) \left( \phi(x+e_i)-\phi(x) \right), 
\end{equation}
where for each $i\in\{1,\ldots,d\}$,  take $f_i: \Rd \to \R$ to be a compactly supported, smooth, deterministic function,  and proved that the random variable $ F_R$ converges in law to a normal random variable.  Later,  the result of \cite{NS} has been generalized to dynamical settings \cite{GOS} and also to finite volume measures such that the boundary condition has at most poly-logarithmic fluctuations \cite{Mi}.  We also mention the related work \cite{DGI} which characterizes the Wulff shape and the large deviation principle for macroscopic profiles of $\nabla \phi$,  and results that are related to the extension of the Gibbs measure \eqref{e.GL} to non-uniformly convex settings \cite{She, BS, AKMu, MiP, MaP, Bau1,Bau2},   the study of the maximum of the random field \eqref{e.GL} \cite{BW,WZ}, and to spin models which is related to a Gibbs measure with convex interaction by a duality transform \cite{DW}. 

A natural question arises is whether the Gaussian limit established in \cite{NS} holds on much smaller scales, given that the microscopic interaction is not Gaussian.  Under an additional assumption on the ellipticity contrast for $V$,  namely $\Lambda < 2\lambda$,  Conlon and Spencer proved a stronger result \cite{CS},  which states that for the infinite volume gradient Gibbs measure $\mu$,
\begin{equation*}
\left|  \log \left\langle \exp(t(\phi(0) - \phi(x))) \right\rangle_\mu 
- \frac {t^2}2 \var_\mu(\phi(0) - \phi(x))
\right|
\leq 
Ct^3 \sup_{x\in\R} |V'''(x)|.
\end{equation*}
The result suggests that under these assumptions of $V$,  the pointwise distribution of $\phi(x) -\phi(0)$ is close to a Gaussian,  and one has to move to a large deviation regime to see non-Gaussian tails.

In this paper we are able to bring down the scale to $R=1$,  and prove a local central limit theorem of the gradient Gibbs measure \eqref{e.GL},   under the assumptions on the potential~$V$ given at the beginning of the paper. 
\begin{theorem}
\label{t.main}
Let $\phi $ be sampled from the Gibbs measure (\ref{e.GL}). Assume
the potential $\mathsf V\left( \cdot \right) $ satisfies the coniditions (i) -(iii). 
Then the density function $g_N$ of $\phi(0)/ \sqrt{\log N}$ converges uniformly in $\R$ to $\frac{1}{\sqrt{2\pi}} e^{-\frac1{2 \g} x^2}$.  Moreover, there exists $C<\infty$ such that 
$|g_N(x) - \frac{1}{\sqrt{2\pi}} e^{-\frac1{2\g} x^2} | \leq   \frac {C}{\sqrt{\log N}}$
\end{theorem}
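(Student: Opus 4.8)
The plan is to establish a local CLT for $\phi(0)$ by combining a Helffer--Sj\"ostrand representation of the characteristic function with homogenization estimates for the associated random walk in a dynamic environment. Write $\psi_N(\xi) := \langle e^{i\xi\phi(0)}\rangle_{\mu_N}$ for the characteristic function of $\phi(0)$. Since $g_N$ is the density of $\phi(0)/\sqrt{\log N}$, Fourier inversion gives $g_N(x) = \frac{1}{2\pi}\int_\R e^{-i x t}\,\psi_N\!\big(t/\sqrt{\log N}\big)\,dt$, and the target bound will follow from a quantitative comparison of $\psi_N(t/\sqrt{\log N})$ with $e^{-\g t^2/2}$ in a suitable weighted $L^1$ sense in $t$. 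First I would differentiate $\psi_N$ in $\xi$ and use the Helffer--Sj\"ostrand (random walk) representation: $\partial_\xi \log\langle e^{i\xi\phi(0)}\rangle_{\mu_N}$ can be written in terms of a resolvent $(\mathcal L + \text{zeroth order})^{-1}$ of a uniformly elliptic operator with random (field-dependent) coefficients, tilted by the factor $e^{i\xi\phi(0)}$. Because $V''$ is bounded between $\lambda$ and $\Lambda$, this operator is uniformly elliptic, and the Brascamp--Lieb inequality already gives $\var_{\mu_N}(\phi(0)) \sim \g \log N$ up to $O(1)$; the point is to control the \emph{next} order and the higher cumulants.

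The key step is a quantitative homogenization / decoupling estimate: I would show that the third and higher cumulants of $\phi(0)$ are $O(1)$ uniformly in $N$ (not growing like $\log N$), so that after rescaling by $\sqrt{\log N}$ they contribute $O((\log N)^{-1/2})$ to $\log\psi_N(t/\sqrt{\log N})$ on the relevant range of $t$. For the third cumulant this is exactly the Conlon--Spencer-type bound $|\langle \phi(0)^3\rangle^c_{\mu_N}| \le C\sup|V'''|$, which I would re-derive \emph{without} the restriction $\Lambda < 2\lambda$ by exploiting the two-dimensional geometry: the relevant triple sum of products of gradient-gradient covariances is summable in $d=2$ when one covariance is differentiated, using the $C^{2,1}$ regularity of $V$ to get the Lipschitz (gradient) bound $|\nabla_x \langle \nabla\phi(0);\nabla\phi(x)\rangle| \le C|x|^{-3}$ from large-scale regularity theory for the linearized operator. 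The higher cumulants are handled the same way by induction on the cumulant order, each additional derivative in the Helffer--Sj\"ostrand hierarchy producing another summable decay exponent. Simultaneously I must pin down the variance precisely: $\var_{\mu_N}(\phi(0)) = \g\log N + O(1)$ with $\g$ the homogenized conductivity, which is the content of quantitative homogenization for $\nabla\phi$ and follows from comparing the annealed Green function with the homogenized (discrete harmonic) Green function on $Q_N$, with an $O(|x|^{-1-\alpha})$ error.

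Putting these together: on the range $|t| \le (\log N)^{1/10}$, say, a Taylor expansion of $\log\psi_N(t/\sqrt{\log N})$ gives $-\tfrac12\g t^2 + O(|t|^3(\log N)^{-1/2})$, so $\psi_N(t/\sqrt{\log N}) = e^{-\g t^2/2}(1 + O(|t|^3(\log N)^{-1/2}))$; for larger $|t|$ one needs an a priori bound $|\psi_N(\xi)| \le e^{-c\xi^2\log N}$ for $|\xi| \lesssim 1$, which again comes from Brascamp--Lieb (the field is more log-concave than a Gaussian with variance $\Lambda^{-1}$ times the lattice Green function), ensuring the tail of the Fourier integral is negligible. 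Integrating against $e^{-ixt}$ and using $\int |t|^3 e^{-\g t^2/2}\,dt < \infty$ yields $|g_N(x) - \frac{1}{\sqrt{2\pi}}e^{-x^2/(2\g)}| \le C(\log N)^{-1/2}$ uniformly in $x$, which also gives the uniform convergence. I expect the main obstacle to be the cumulant bounds uniformly in $N$ without the ellipticity-contrast hypothesis: this requires genuine large-scale regularity (a $C^{0,1}$-type estimate for the linearized gradient field equation, in the spirit of the Gloria--Neukamm--Otto / Armstrong--Kuusi--Mourrat quantitative homogenization theory, adapted to this non-divergence-after-linearization setting) to upgrade the $L^2$ covariance decay to a pointwise gradient decay that is summable in $d=2$.
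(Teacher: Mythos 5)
Your proposal follows a genuinely different route from the paper. The paper does not control cumulants of $\phi(0)$ directly; instead it writes $\phi(0)$ as a sum of roughly $\log N$ increments $A_k$ of harmonic averages of the field (\sref{s.decouple}), uses Miller's harmonic coupling (\tref{decouple}) to show the characteristic function approximately \emph{factorizes} over these increments (Proposition~\ref{ind}), and uses quantitative homogenization (\tref{qclt}) only to pin down $\E^{r_j,0}[A_j^2]=\g+O(r_j^{-\beta})$. That decoupling sidesteps any explicit cumulant expansion. By contrast, your route requires third and higher cumulant bounds of order $O(1)$ uniformly in $N$ \emph{without} the Conlon--Spencer ellipticity restriction $\Lambda<2\lambda$; you correctly flag this as the main obstacle, but it is not a minor technical point -- it is essentially an open problem, and the $d=2$/large-scale-regularity sketch you give is not a proof. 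The paper's coupling-based factorization is precisely what lets one avoid having to control all cumulants to all orders.

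There are also two concrete gaps on the Fourier-tail side. First, the claim that the a priori bound $|\psi_N(\xi)|\le e^{-c\xi^2\log N}$ for $|\xi|\lesssim 1$ ``comes from Brascamp--Lieb'' is not correct: Brascamp--Lieb gives an \emph{upper} bound on the variance, $\var_{\mu_N}\phi(0)\le \lambda^{-1}G_{Q_N}(0,0)$, and a lower bound $\hess H\ge \lambda(-\Delta)$ on the Hamiltonian only makes the measure more concentrated; neither of these forces decay of the characteristic function (a $\delta$-mass is log-concave with arbitrarily large Hessian and has $|\hat\mu|\equiv 1$). The paper instead obtains this bound via a Mermin--Wagner deformation argument (\lref{MW} and \sref{s.MW}) which crucially uses the \emph{upper} bound $V''\le\Lambda$ through the pointwise two-sided estimate $V(a+h)+V(a-h)\le 2V(a)+h^2\sup V''$; this is a different mechanism from Brascamp--Lieb and cannot be replaced by it. Second, your proposal never addresses $|\xi|\gg 1$ at all: to invert the Fourier transform one needs $\psi_N$ integrable, and for $|s|$ large the right bound is the polynomial $(C/s^2)^{\log N}$ from \lref{MW}, obtained again by Mermin--Wagner after the rescaling $\phi\mapsto t\phi$. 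Without both of these tail estimates the inversion integral is not controlled and the Berry--Esseen rate $O((\log N)^{-1/2})$ does not follow.
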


We remark here the same proof (with a slightly modification of the multiscale argument in Section \ref{s.decouple} ) also gives the local CLT at other locations inside the bulk.  Namely,  for any $x_N\in Q_N$ such that the graph distance between $x_N$ and $\partial Q_N$ tends to infinity,  the density of $\frac{\phi(x_N)}{\sqrt{\log \text{dist}(x_N,  \partial Q_N)}}$ converges uniformly in $\R$ to the same Gaussian limit.   The same proof also works in the inifnite volume setting,  gives the local CLT for $\frac{\phi(0)- \phi(x)}{(\log |x|)^\frac 12}$ as $|x|\to \infty$.  

Notice that if $\phi(0)$ can be written as $X_1 + \cdots X_{\log N}$,  where $X_i$ are i.i.d random variables, then the Berry-Esseen Theorem gives $|g_N(x) - \frac{1}{\sqrt{2\pi}} e^{-\frac1{2\g} x^2} | \leq   \frac {C\E[|X_1|^3]}{\sqrt{\log N}}$.  We will see in Section \ref{s.decouple} that the analogues of $X_i$ are increments of harmonic averages of $\phi$,    which are not independent but have certain decoupling properties (thanks to \cite{Mi}),  and Theorem \ref{t.main} gives a Berry-Esseen type estimate for the density function.  An immediate consequence of Theorem \ref{t.main} is that the distribution of $\phi(0)$ is sufficiently spreadout in $[-\sqrt{\log N}, \sqrt{\log N}]$. Indeed,
given any $a\in [-\sqrt{\log N}, \sqrt{\log N}]$, we apply Theorem \ref{t.main} to obtain

\begin{equation*}
\int_{\frac{a}{\sqrt{\log N}}}^{\frac{a+1}{\sqrt{\log N}}} g_N(x) \,dx - \int_{\frac{a}{\sqrt{\log N}}}^{\frac{a+1}{\sqrt{\log N}}}\frac{1}{\sqrt{2\pi}} e^{-\frac1{2 \g} x^2} \,dx 
= O(\frac{1}{\log N})
\end{equation*}
Thus 
\begin{equation*}
 \P(\phi(0) \in [a,a+1]) = \frac{1}{\sqrt{2\pi \log N}}e^{-\frac1{2 \g} \frac{a^2}{\log N} } + O(\frac{1}{\log N})
\end{equation*}

Why do we expect a local CLT at scale $O(1)$? The argument to prove a macroscopic CLT in \cite{NS} was based on a beautiful observation that the scaling limit can be derived from an elliptic homogenization problem via the Helffer-Sj\"ostrand representation \cite{HS}.  With Armstrong \cite{AW},  we extend and quantify the homogenization argument by Naddaf and Spencer, based on the quantitative theory for homogenization developed by Armstrong, Kuusi and Mourrat \cite{AKM, AKMBook}.  In particular,  we obtained the convergence of the Hessian of the surface tension with an algebraic rate,  resolve an open question posed by Funaki and Spohn \cite{FS} regarding the $C^2$ regularity of surface tension,  and the fluctuation-dissipation conjecture of \cite{GOS}.  Following the approach of \cite{AW} and \cite{AKMBook},  we are able to obtain a quantitative homogenization of the Helffer-Sj\"ostrand PDE,  thus estimate the covariance structure of $\mu_N$, as $N$ gets large,  with a high precision (we also refer to \cite{AD2} for some further applications of the quantitative homgenization ideas to the $\nabla\phi$-model).   To obtain a Gaussian limit of $\phi(0)/ \sqrt{\log N}$, 
we apply the harmonic approximation result by Miller \cite{Mi}, which enables us to write $\phi(0)$ as the sum of $\log N$ increments,  with certain decoupling properties.  These are the two main ingredients behind the proof of Theorem \ref{t.main}.

We remark here that the key estimate for proving Theorem \ref{t.main} is the characteristic function asymptotics for $\left\langle \exp\left( i\frac{t}{\sqrt{\log N}} \phi(0) \right) 
\right\rangle_{\mu_N}$, given in Lemma \ref{l.small}.  If one has the convergence stated in Lemma  \ref{l.small} for $t\in\R$, without quantifying the rate of convergence (namely,  a CLT for $\phi(0)/ \sqrt{\log N}$), then it implies a local CLT without any rate of convergence.  The rate of convergence for the local CLT depdends on the convergence rate and the validity of the $t$ region such that Lemma  \ref{l.small} holds.  In this paper we present a proof with a convergence rate using quantitative homogenization,  and explain briefly in the next section how it can be simplified if one only aims for a qualitative local CLT.

The proof of Theorem \ref{t.main} presented in this paper implies the asymptotics for the characteristic function for the distribution of $\phi(0)$,  namely there exists some $\g= \g(V)>0$, such that  
\begin{equation}
\label{e.char}
\left\langle \exp\left( is \phi(0) \right) 
\right\rangle_{\mu_N}
\approx
e^{- \frac{s^2}{2} \g \log N},
\end{equation}
as long as $s = O((\log N)^{-\frac 14})$.  Using the same major ingredients,  but with a more elaborate multiscale argument,  we may improve the \eqref{e.char} so that it holds for $s$ within a small neighborhood of the origin,  with radius independent of $N$.  It would be very interesting to extend the characteristic function estimates \eqref{e.char} beyond $|s| =1$.  For this,  we include an open question posed by Tom Spencer. 
\newline

\textbf{Open Question:}  Consider the lattice dipole gas model,  which is a special case of the gradient Gibbs measure \eqref{e.GL} with $V(x) = \frac{x^2}{2} + z \cos x$,  with $|z|<1$.  Prove that the leading term in the asymptotic expansion of the characteristic function 
\begin{equation}
\label{e.dipole}
\left\langle \exp\left( i\pi ( \phi(0) -\phi(x)) \right) 
\right\rangle_{\mu_{dipole}}
\approx
e^{- \frac{\pi^2}{2} \g \log |x|},
\end{equation}
for some $\g >0$.  
Estimates of type \eqref{e.dipole} plays an important role in the study of the (lattice) Coulomb gas and of the Coulomb gas representation of the low temperature Abelian spin models. 

We summarize the characteristic function estimates needed for proving Theorem \ref{t.main} in the next section,  and introduce some notations in Section \ref{s.prelim}.  In Section \ref{s.CLT} we derive a central limit theorem for the linear statistics of $\nabla\phi$ with an algebraic rate of convergence, which quantifies the result of \cite{NS}, following the argument of \cite{AW} and \cite{AKMBook}.  In Section \ref{s.decouple} we recall the harmonic approximation result in \cite{Mi},  and use a multiscale argument to obtain the precise characteristic function asymptotic of $\phi(0)$.  Finally,  we gave an upper bound for the large $s$ characteristic function in Section \ref{s.MW},  based on a Mermin-Wagner type argument,  and finishes the proof of Theorem \ref{t.main} . 

\section{Estimates for characteristic functions}

Theorem \ref{t.main} follows from the quantitative estimates of the characteristic function below. 
The first lemma gives the precise estimate for the characteristic function $\langle e^{is\phi(0)} \rangle_{\mu_N}$ for $s = o((\log N)^{-\frac 14})$.  
\begin{lemma}
\label{l.small}
There exists $\g=\g(V) >0$,  such that for $N$ sufficiently large and $t= o((\log N)^{\frac 14})$, we have 
\begin{equation}
\left\langle \exp\left( i\frac{t}{\sqrt{\log N}} \phi(0) \right) 
\right\rangle_{\mu_N}
= e^{- \frac{t^2}{2} \g} \left( 1+O\left(\frac{t^2}{(\log N)^\frac 12}\right)\right)
\end{equation}
\end{lemma}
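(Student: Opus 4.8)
The plan is to compute the characteristic function $\langle \exp(i s \phi(0))\rangle_{\mu_N}$ with $s = t/\sqrt{\log N}$ by first expressing $\phi(0)$ as a telescoping sum of harmonic-average increments over dyadic scales, using the harmonic approximation result of Miller \cite{Mi} recalled in Section~\ref{s.decouple}. Write $\phi(0) = \sum_{k=1}^{K} \Delta_k + \mathcal{E}$, where $K \asymp \log N$, each increment $\Delta_k$ is (close to) the difference of harmonic averages of $\phi$ on annuli at scales $2^{k-1}$ and $2^k$, and $\mathcal{E}$ is a small remainder. The key structural input from \cite{Mi} is that the $\Delta_k$ are \emph{approximately independent} across well-separated scales, with quantitative decoupling errors, and the quantitative homogenization of the Helffer–Sj\"ostrand PDE from Section~\ref{s.CLT} identifies $\var_{\mu_N}(\Delta_k) \to \g$ (the limiting constant being $\frac{1}{2\pi}$ times the inverse homogenized conductivity, reflecting the $\log$ Green's function asymptotics) with an algebraic rate, and controls higher cumulants.

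\emph{First} I would set up the telescoping decomposition and collect the three quantitative facts I need as black boxes: (a) $\E[\Delta_k^2] = \g + O(2^{-\alpha k})$ for some $\alpha > 0$ from the homogenization estimates of Section~\ref{s.CLT}; (b) a decoupling bound $\big|\E[e^{is(\Delta_{k_1}+\cdots+\Delta_{k_m})}] - \prod_j \E[e^{is\Delta_{k_j}}]\big| \lesssim (\text{error})$ for scales $k_1 < \cdots < k_m$ separated enough, from \cite{Mi}; (c) a third-moment / higher-cumulant bound $|\E[\Delta_k^3]| \leq C$ uniformly, which makes each single-scale characteristic function $\E[e^{is\Delta_k}] = 1 - \frac{s^2}{2}\E[\Delta_k^2] + O(s^3)$ with a controlled error. \emph{Then} the heart of the argument is a replacement scheme: approximate $\langle e^{is\phi(0)}\rangle$ by $\prod_{k=1}^K \E[e^{is\Delta_k}]$ using (b) summed over scales (handling the remainder $\mathcal{E}$ separately, which is where one needs $s = o((\log N)^{-1/4})$ so that $s \cdot \|\mathcal{E}\|$ is negligible), then take logarithms: $\log \prod_k \E[e^{is\Delta_k}] = \sum_k \log(1 - \tfrac{s^2}{2}\g + O(2^{-\alpha k}s^2) + O(s^3))$. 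With $s^2 \asymp t^2/\log N$ and $K \asymp \log N$, the main term sums to $-\frac{t^2}{2}\g$, the $O(2^{-\alpha k} s^2)$ errors sum to $O(s^2) = O(t^2/\log N)$, and the cubic errors sum to $K \cdot O(s^3) = O(t^3/(\log N)^{1/2})$, giving exactly the claimed $e^{-t^2\g/2}(1 + O(t^2/(\log N)^{1/2}))$ after absorbing $t^3/(\log N)^{1/2}$ into $t^2/(\log N)^{1/2}$ in the regime $t = o((\log N)^{1/4})$.

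\emph{The main obstacle} will be step (b): making the multiscale decoupling quantitative enough that the accumulated error over $\asymp \log N$ scales stays smaller than $t^2/(\log N)^{1/2}$. The harmonic approximation of \cite{Mi} controls $\phi$ minus its harmonic extension in terms of the \emph{boundary} fluctuations, so each $\Delta_k$ depends on $\phi$ only through a thin annulus; conditional independence then requires showing that the Gibbs measure restricted to disjoint annuli factorizes up to small errors, which is not literally true and must be quantified using the quantitative homogenization (exponential-type decay of correlations of the field modulo logarithmic growth). One must iterate the two-scale decoupling estimate carefully — e.g. grouping scales into blocks, peeling off one block at a time, and bounding the total error geometrically — rather than naively summing $\log N$ errors of fixed size. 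A secondary technical point is that the increments $\Delta_k$ are only defined via the harmonic approximation up to an error that is itself random and field-dependent; I would need to show this error contributes $o(1)$ to the characteristic function uniformly over the allowed range of $t$, again using $s = o((\log N)^{-1/4})$ to gain the extra power of $(\log N)^{1/4}$ needed to kill it.
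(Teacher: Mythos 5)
Your overall strategy --- telescoping $\phi(0)$ as a sum of harmonic-average increments across $\asymp \log N$ scales, decoupling via Miller's harmonic approximation, and identifying $\E[\Delta_k^2] \to \g$ with an algebraic rate from the quantitative homogenization --- is the same as the paper's. However, your error analysis contains a genuine gap that prevents you from reaching the claimed Berry--Esseen rate.

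You propose to expand $\E[e^{is\Delta_k}] = 1 - \tfrac{s^2}{2}\E[\Delta_k^2] + O(s^3)$ using a uniform third-moment bound $\E|\Delta_k|^3 \le C$. Summing these cubic errors over $K\asymp\log N$ scales with $s = t/\sqrt{\log N}$ produces an error in $\log \Psi_N(t)$ of order $K s^3 = t^3/\sqrt{\log N}$. You then claim this is ``absorbed into $t^2/\sqrt{\log N}$,'' but that is false when $t$ is unbounded: the ratio of the two quantities is $t$. For a concrete example in the allowed range, take $t = (\log N)^{1/5}$, which is $o((\log N)^{1/4})$; then $t^3/\sqrt{\log N} = (\log N)^{1/10} \to \infty$ while the target $t^2/\sqrt{\log N} = (\log N)^{-1/10} \to 0$. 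The cubic error would dominate the main term $t^2\g/2$ itself, so the asymptotics would be destroyed rather than merely blurred.

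The missing ingredient, which the paper uses crucially, is the symmetry assumption (i) on $V$. Under the zero-boundary Gibbs measure on a ball, the law of the linear functional $A_j$ is symmetric under $\phi\mapsto-\phi$, so $\E^{r_j,0}[A_j^3]=0$ \emph{exactly}, not just bounded, and $\E^{r_j,0}[e^{isA_j}]$ is real and even in $s$. The next correction is then governed by the fourth moment: $\E^{r_j,0}[e^{isA_j}] = 1 - \tfrac{s^2}{2}\E^{r_j,0}[A_j^2] + O(s^4 \E^{r_j,0}[A_j^4])$, where $\E^{r_j,0}[A_j^4]$ is uniformly bounded via Brascamp--Lieb by the corresponding Gaussian moment. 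Summing over scales gives an error $K s^4 = t^4/\log N = o(t^2/\sqrt{\log N})$ for $t = o((\log N)^{1/4})$, which is what makes the stated rate attainable. Replacing your third-moment bound with this symmetry-driven cancellation is not optional; it is the reason the lemma holds in the stated range of $t$.
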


\begin{remark}
Lemma \ref{l.small} quantifies the CLT for the pointwise field $\phi(0)/ \sqrt{\log N}$, namely for $t\in\R$, 
\begin{equation}
\label{e.clt}
\left\langle \exp\left( i\frac{t}{\sqrt{\log N}} \phi(0) \right) 
\right\rangle_{\mu_N}
= e^{- \frac{t^2}{2} \g} +o_N(1).
\end{equation}
The pointwise CLT \eqref{e.clt} can be proved by combining the CLT for the macroscopic average of the field established in \cite{NS, Mi}, and the multscale argument presented in \cite{BW} or in Section \ref{s.decouple} of this paper. 
\end{remark} 

We also need (non-optimal) decay estimate of the characteristic function for large $s$,  summarized in the two lemmas below.  
\begin{lemma}
\label{l.medium}
Let $\g=\g(V) >0$ be the same constant as in Lemma \ref{l.small}. There exists $\eps = \eps(V)>0$ , such that for $|s|<\eps$, we have for $N$ sufficiently large, 
\begin{equation}
\left\langle \exp\left( is \phi(0) \right) 
\right\rangle_{\mu_N}
\leq 2e^{- \frac{s^2}{3} \g \log N}
\end{equation}
\end{lemma}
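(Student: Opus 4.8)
The plan is to bootstrap the sharp small-$s$ asymptotics of Lemma \ref{l.small} to the fixed-but-small range $|s| < \eps$ by a two-step argument: first a crude a priori Gaussian-type upper bound valid for all $s$ in a neighborhood of the origin, then a convexity/tensorization argument in $s$ that upgrades the exponent. The key point is that $\phi(0)$, viewed through the Helffer--Sj\"ostrand representation, has an approximate decomposition into $\log N$ increments of harmonic averages (as will be made precise in Section \ref{s.decouple}); writing $\phi(0) \approx \sum_{k=1}^{\log N} X_k$ where the $X_k$ are the dyadic increments, the characteristic function factorizes approximately over scales. For each fixed scale the increment $X_k$ is itself a linear statistic of $\nabla\phi$ on an annulus of dyadic size, so the quantitative CLT of Section \ref{s.CLT} applies to $X_k$ and gives $\langle e^{isX_k}\rangle = e^{-\frac{s^2}{2}\g_k}(1 + \text{error})$, with $\sum_k \g_k = \g\log N + O(1)$. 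The errors accumulate, but because we only demand the weaker exponent $\g\log N/3$ rather than $\g\log N/2$, we have a factor-$\tfrac32$ slack in the exponent to absorb them.

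First I would establish the a priori bound: for $|s|$ below some $\eps_0(V)$,
\begin{equation*}
\left\langle \exp\left( is\phi(0) \right) \right\rangle_{\mu_N} \leq e^{-\frac{s^2}{2}(\g - \delta)\log N}
\end{equation*}
for any fixed $\delta>0$ and $N$ large, by running the multiscale decoupling of Section \ref{s.decouple} at \emph{fixed} $s$ rather than at $s = t/\sqrt{\log N}$. The decoupling lemma from \cite{Mi} gives, at each scale, a bound of the form $\langle e^{isX_k} \mid \mathcal{F}_{k-1}\rangle \leq e^{-c s^2}(1 + r_k)$ with a multiplicative remainder $r_k$ that is small once $|s|$ is small (uniformly in the scale), since $X_k$ is a sum of $O(1)$-scale gradient variables with uniformly controlled moments by uniform convexity (Brascamp--Lieb). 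Multiplying the $\log N$ conditional bounds and choosing $\eps$ so that $\prod_k(1+r_k) \leq e^{\delta \log N}$ then yields the claim. The factor $2$ and the exponent $\g/3$ in the statement are slack: taking $\delta = \g/6$ and $\eps$ correspondingly small suffices, and the prefactor $2$ absorbs the $O(1)$ additive error in $\sum_k \g_k$.

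The main obstacle I anticipate is controlling the conditional-expectation remainders $r_k$ \emph{uniformly over all $\log N$ scales simultaneously at a fixed $s$}. In Lemma \ref{l.small} one exploits that $s = t/\sqrt{\log N} \to 0$, so each individual $r_k$ is tiny; here $s$ is merely small, so each $r_k$ is merely $O(s^2)$ (or $O(|s|^3)$ after using symmetry of $V$, which kills the odd term), and one must check that the product over $\log N$ factors does not blow up — this is exactly what forces $\eps$ to be a small absolute constant rather than something shrinking in $N$, and it is why the cleanest route is to sacrifice the constant in the exponent. A secondary technical point is that the harmonic-average increments $X_k$ are not exactly the $\phi$-increments and not exactly independent; one needs the decoupling error from \cite{Mi} (harmonic approximation up to a lower-order field with polynomially small variance) to be summable against $e^{\text{const}\cdot s^2}$ over scales, which again is fine for $|s|$ small. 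Once the a priori bound is in place, the stated inequality with exponent $\g/3$ and prefactor $2$ follows immediately for $N$ large.
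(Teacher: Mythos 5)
Your proposal captures the essential mechanism used in the paper: decouple $\phi(0)$ into $\sim\log N$ increments of harmonic averages at geometric scales via Miller's approximate harmonic coupling, apply the quantitative variance convergence of Section~\ref{s.CLT} (Lemma~\ref{l.homogvar}) to each increment $A_j$ so that $\E^{r_j,0}[A_j^2]=\g+O(r_j^{-\beta})$, use symmetry to kill the odd term in the Taylor expansion of $\E^{r_j,0}[e^{isA_j}]$, and exploit the slack between $\g/2$ and $\g/3$ to absorb the $O(s^4)$ corrections and the decoupling errors. So the substance matches the paper's proof (Proposition~\ref{ind} together with the final argument in Section~\ref{s.smallmedium}).

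Two points of confusion are worth flagging. First, your opening frames this as a bootstrap from Lemma~\ref{l.small} followed by a ``convexity/tensorization argument in $s$'' — neither step actually appears in what you then describe, and neither appears in the paper; the proof is a direct multiscale argument at fixed $s$, independent of Lemma~\ref{l.small}, and you should drop that framing. Second, you attribute the per-scale remainder $r_k$ to ``$|s|$ small'', but the decoupling error from Miller's theorem is $O(r_k^{-\delta})$, polynomially small in the scale $r_k=e^{-k}N$ and \emph{independent} of $s$; what depends on $s$ is the \emph{propagated} error, which after iterating $k$ times picks up a factor $e^{Cs^2k}$ (this is precisely the $O(\sum_j e^{Cs^2k}r_j^{-\delta})$ term in Proposition~\ref{ind}). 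This is the genuine constraint: for $e^{Cs^2k}r_k^{-\delta}\to 0$ one needs $k\le(1-\Theta(\eps^2))\log N$, so the iteration cannot be run all the way to scale $\log N$ when $s$ is a fixed constant, and the inner piece $\phi(0)-X_{r_{k_1,-}}$ from the unresolved scales contributes no Gaussian decay (it is bounded trivially by $1$). Your claim of an a~priori bound $e^{-\frac{s^2}{2}(\g-\delta)\log N}$ for ``any $\delta>0$'' is therefore only available with $\eps\to 0$ as $\delta\to 0$; as you correctly note, a fixed choice like $\delta=\g/6$ with a correspondingly small fixed $\eps$ is what one actually gets, and this does give an exponent $\ge\g/3$, consistent with the statement. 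Once these two issues are cleaned up, your outline is a faithful rendering of the paper's argument.
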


\begin{remark}
If one only aims for the qualitative local CLT,  then it suffices to prove a weaker estimate,  that there exist $c_1>0$ and  $\eps = \eps(V)>0$ , such that for $|s|<\eps$, we have for $N$ sufficiently large, 
\begin{equation}
\label{e.mediumweak}
\left\langle \exp\left( is \phi(0) \right) 
\right\rangle_{\mu_N}
\leq 2e^{- c_1 s^2  \log N}.
\end{equation}
As will be explained in Section \ref{s.decouple},  the proof of \eqref{e.mediumweak} is simpler,  and the quantitative CLT presented in Section \ref{s.CLT} would not be needed.  
\end{remark}

\begin{lemma}
\label{l.MW}
There exists $\eps_1 >0$ and $C<\infty$, such that for $s\in\R$, we have 
\begin{equation}
\label{e. MW}
|\left\langle \exp\left( is \phi(0) \right) \right\rangle_{\mu_N}|
\le
\min\{1-\eps_1, \frac C{s^2}\}^{\log N}
\end{equation}
\end{lemma}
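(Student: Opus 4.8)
The plan is to prove, by induction over dyadic scales, a statement uniform in the boundary condition, using only the uniform convexity of $V$ and the domain Markov property of $\mu_{Q_r,b}$. For $1\le r\le N$ and a boundary datum $b$ on $\partial Q_r$, write $\overline{\phi}_{\partial Q_r}$ for the average of $\phi$ over $\partial Q_r$ and set
\[
\Phi_r(b):=\Big\langle \exp\!\big(is(\phi(0)-\overline{\phi}_{\partial Q_r})\big)\Big\rangle_{\mu_{Q_r,b}}.
\]
Since the interaction sees only gradients, $\mu_{Q_r,b+c\mathbf 1}$ is the image of $\mu_{Q_r,b}$ under $\phi\mapsto\phi+c$, while $\phi(0)-\overline{\phi}_{\partial Q_r}$ is invariant under this shift; hence $\Phi_r(b)$ depends on $b$ only through its ``shape'' $b-\overline{b}_{\partial Q_r}\mathbf 1$, and in particular $\langle e^{is\phi(0)}\rangle_{\mu_N}=\Phi_N(0)$ since $\mu_N$ has zero boundary data. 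The key step is the one-scale estimate
\[
\sup_b|\Phi_r(b)|\ \le\ \rho(s)\,\sup_{b'}|\Phi_{\lfloor r/2\rfloor}(b')|,\qquad \rho(s):=\min\{1-\eps_1,\,C/s^2\},
\]
valid uniformly for all $2\le r\le N$; iterating it $\lceil\log_2 N\rceil$ times down to the trivial bound $\sup_b|\Phi_1(b)|\le 1$ gives $|\langle e^{is\phi(0)}\rangle_{\mu_N}|\le\rho(s)^{\lceil\log_2 N\rceil}$, which after absorbing the fixed factor $1/\log 2$ into $\eps_1$ and $C$ is the assertion (meaningful, as in the statement, for $|s|$ bounded away from $0$).

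To prove the one-scale estimate, put $r':=\lfloor r/2\rfloor$ and condition $\mu_{Q_r,b}$ on $\phi|_{\partial Q_{r'}}$. Splitting $\phi(0)-\overline{\phi}_{\partial Q_r}=(\phi(0)-\overline{\phi}_{\partial Q_{r'}})+(\overline{\phi}_{\partial Q_{r'}}-\overline{\phi}_{\partial Q_r})$ and invoking the domain Markov property, the conditional expectation of $\exp(is(\phi(0)-\overline{\phi}_{\partial Q_{r'}}))$ given $\phi|_{\partial Q_{r'}}$ equals $\Phi_{r'}(\phi|_{\partial Q_{r'}})$, which again depends on $\phi|_{\partial Q_{r'}}$ only through its shape. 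Conditioning additionally on that shape, and using that $\overline{\phi}_{\partial Q_r}=\overline{b}_{\partial Q_r}$ is deterministic, one obtains $|\Phi_r(b)|\le\big\langle|\Phi_{r'}(\phi|_{\partial Q_{r'}})|\cdot|\chi_r(\phi|_{\partial Q_{r'}})|\big\rangle_{\mu_{Q_r,b}}$, where $\chi_r(u)$ (also depending on $b$) is the characteristic function at $s$ of the conditional law of the level $\overline{\phi}_{\partial Q_{r'}}$ under $\mu_{Q_r,b}$, given that $\phi|_{\partial Q_{r'}}$ has the same shape as $u$. Thus the one-scale estimate reduces to $|\chi_r(u)|\le\rho(s)$, uniformly in $r,b,u$.

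This last bound is the heart of the matter, and is where uniform convexity enters. Marginalising out the field inside $Q_{r'}$---whose partition function, with $\phi|_{\partial Q_{r'}}$ prescribed, is translation invariant because the interaction sees only gradients---one finds that for a fixed shape the conditional law of $t=\overline{\phi}_{\partial Q_{r'}}$ has density proportional to $e^{-f(t)}$, where $f(t)$ is the free energy of the annulus $Q_r\setminus Q_{r'}$ with inner boundary raised uniformly by $t$ and outer boundary $b$. A Helffer--Sj\"ostrand / resistor-network comparison shows that $f$ is $C^2$, convex, with $f''$ comparable, up to the factors $\lambda$ and $\Lambda$, to the discrete capacity between $\partial Q_{r'}$ and $\partial Q_r$; since every dyadic annulus has capacity of order one, this yields $0<c_1\le f''\le c_2<\infty$ with $c_1,c_2$ depending only on $(\lambda,\Lambda)$. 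Two integrations by parts then give $|\chi_r(u)|=s^{-2}|\langle f''-(f')^2\rangle|\le 2c_2/s^2$ (using $\langle(f')^2\rangle=\langle f''\rangle$), while the elementary identity $\var(t)\,\langle f''\rangle\ge 1$ gives $\var(t)\ge 1/c_2$, so these one-parameter families of log-concave densities are uniformly non-degenerate, and by a standard compactness argument together with the $s^{-2}$ bound one gets $|\chi_r(u)|\le 1-\eps_1$ whenever $|s|\ge\eps'>0$; combining the two bounds gives $\rho(s)$. The base case $r=1$ is identical: conditionally on the four neighbours $a_1,\ldots,a_4$ of $0$, the variable $\phi(0)$ has density proportional to $\exp(-\sum_i V(a_i-\cdot))$, whose second log-derivative lies in $[4\lambda,4\Lambda]$.

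The main obstacle is not any individual step but the \emph{uniformity}: the $\asymp\log N$ one-scale factors get multiplied, so the two-sided bound $c_1\le f''\le c_2$---and hence $\eps_1$ and $C$---must be independent of both the scale $r$ and the boundary datum $b$. This is precisely what the scale invariance of the dyadic annuli delivers, but it requires carrying the resistor-network comparison through with constants that do not degenerate as $r\to\infty$. In particular, unlike Lemma~\ref{l.small}, this crude bound should need neither the decoupling of increments from Section~\ref{s.decouple} nor the quantitative homogenisation input---only the domain Markov property and the uniform convexity of $V$.
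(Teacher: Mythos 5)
Your multiscale conditioning skeleton is close in spirit to the paper's: you peel off a scale at a time using the domain Markov property and reduce to a one-scale characteristic-function bound. The paper does the same thing, conditioning on the gradient $\sigma$-algebras $\mathcal F_k$ on the diamond shells $\{|x_1|+|x_2|=2^k\}$ (which plays exactly the role of your ``shape'' of $\phi|_{\partial Q_{r'}}$) and telescoping $\phi(0)$ as $\sum_k (\phi(x_k)-\phi(x_{k-1}))$, yielding a product of conditional characteristic functions of the annulus increments. The difference is in how the one-scale bound is obtained. The paper perturbs the entire field by the piecewise-linear interpolator $\tau$, uses only $V''\le\Lambda$ to get the density ratio $g(\phi^+)g(\phi^-)\ge e^{-Cb^2}g(\phi)^2$, projects this to the one-dimensional conditional density of the increment, wraps it on the circle, and invokes Lemma~\ref{l.density} to bound the Fourier coefficient by $\min\{1-\eps_1, C/s^2\}$; the general-$s$ case is handled by rescaling $\phi\mapsto s\phi$, which replaces $V$ by $V(\cdot/s)$ and improves the curvature to $\Lambda/s^2$. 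You instead compute directly with the free energy $f(t)$ of the annulus and establish $f''\le c_2$, which is equivalent (infinitesimally) to the Mermin--Wagner density ratio and correctly gives you the $C/s^2$ bound by two integrations by parts and the $\var(t)\ge 1/c_2$ lower bound.

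Two points deserve scrutiny. First, you assert the two-sided bound $c_1\le f''\le c_2$ ``by a Helffer--Sj\"ostrand / resistor-network comparison.'' The upper bound $f''\le\Lambda\cdot\mathrm{cap}$ is elementary (choose the harmonic extension $\psi$ and drop the variance term in $f''=\langle\sum V''(\nabla\psi)^2\rangle-\var(\sum V'\nabla\psi)$), and this is the direction the paper's deformation also uses. The \emph{lower} bound $f''\ge\lambda\cdot\mathrm{cap}$ is a strict-convexity-of-surface-tension statement and is not elementary: the naive convexity bound $V(a+b)\ge V(a)+V'(a)b+\tfrac\lambda2 b^2$ produces an extra cumulant-generating term of the same order, so it does not close; it needs a duality or HS input. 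Second, your ``standard compactness argument'' to get $|\chi_r|\le 1-\eps_1$ for $|s|\ge\eps'$ is where the lower bound is actually load-bearing: the family of log-concave densities on $\R$ with $f''\le c_2$ and $\var\ge 1/c_2$ is not tight modulo shifts (take wide near-uniform distributions), so compactness fails without the lower bound $f''\ge c_1$ (equivalently $\var\le 1/c_1$). So either you must prove the lower bound carefully, or better, bypass compactness entirely by noting that $f''\le c_2$ gives the on-$\R$ density ratio $\rho(a+b)\rho(a-b)\ge e^{-c_2 b^2}\rho(a)^2$, wrap on the circle of the appropriate circumference, and apply the same Lemma~\ref{l.density}. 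With that substitution your argument becomes essentially equivalent to the paper's, and your claim that only $V''\le\Lambda$ is needed (no homogenization, no decoupling input from Section~\ref{s.decouple}) is then correct.
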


Before proving these lemmas,  we now explain how they imply Theorem \ref{t.main}.  

\begin{proof}
[Proof of Theorem \ref{t.main} with no rate]
To prove Theorem \ref{t.main},  arguing as the classical local CLT and write 
\begin{equation}
\label{e.Psi}
\Psi_N (t) := \left\langle \exp\left( it \phi(0)/\sqrt{\log N} \right) \right\rangle_{\mu_N}
\end{equation}
Then by inversion theorm,
\begin{equation*}
|g_N(x) - \frac{1}{\sqrt{2\pi}} e^{-\frac1{2\g} x^2} |
=
\left| \int_{-\infty} ^\infty  \frac 1{2\pi} \left( \Psi_N(t) e^{itx} - e^{-\frac 12 \g t^2} e^{itx} \right) \,dt\right|
 \leq
\frac 1{2\pi} \int_{-\infty}^\infty \left| \Psi_N (t) - e^{-\frac 12 \g t^2} \right| \,dt
\end{equation*}
 
 We claim the right side above goes to zero by split the integral into three parts: 
 
 - For $|t|\le a $,  we apply \eqref{e.clt} to conclude that 
 $\Psi_N(t)$ goes to $ e^{-\frac 12 \g t^2} $ uniformly for $t\in [-a,a]$. Thus
 \begin{equation}
 \label{e.bounded}
 \int_{-a}^a \left| \Psi_N (t) - e^{-\frac 12 \g t^2} \right| \,dt \to 0 
\end{equation}
as $N\to\infty$. 

- For $a \le |t| \le \eps \sqrt{\log N}$, we apply \eqref{e.mediumweak},  which yields
 \begin{equation}
\Psi_N (t)  \leq C  e^{-c_1 t^2}
\end{equation}
 Thus 
  \begin{equation}
 \int_{a \le |t| \le \eps \sqrt{\log N}} \left| \Psi_N (t) - e^{-\frac 12 \g t^2} \right| \,dt 
 \leq 2C \int_a^\infty e^{-c_1 t^2} \, dt \to 0 
\end{equation}
if we take $a\to \infty$.

- For $|t| \ge \eps \sqrt{\log N}$, we use the bound \eqref{e.  MW} which implies (let $s = t/\sqrt{\log N}$)
 \begin{multline*}
 \int_{|t| > \eps \sqrt{\log N}} \left| \Psi_N (t) \right| \, dt
 =    \sqrt{\log N} \int_{|s|>\eps} \left| \Psi_N (s) \right| \, ds \\
 \leq
  \sqrt{\log N}  \left( \int_{C>|s|>\eps} (1-\eps_1)^{\log N} \, ds
  + \int_{|s|>C} (\frac C{s^2})^{\log N} \, ds \right)
 \end{multline*}
 which goes to $0$ as $N \to \infty$.
 And we conclude Theorem \ref{t.main}.
 \end{proof}
 
 \begin{proof}[Quantitative proof of Theorem \ref{t.main}]
 To quantify the rate of convergence for the local CLT,  take $a_N= \sqrt{\frac 3{2\g}\log\log N} $ in the proof above.  
  
   - For $|t|\le a_N $,  we apply Lemma \ref{l.small} to obtain a rate of convergence that 
  \begin{equation}
  \int_{-a_N}^{a_N} \left| \Psi_N (t) - e^{-\frac 12 \g t^2} \right| \,dt
  \leq
  \int_{-a_N}^{a_N} C e^{- \frac{t^2}{2} \g} \frac{t^2}{(\log N)^\frac 12}\,dt
  \leq
   \frac{C}{(\log N)^\frac 12}
  \end{equation}
for $N$ sufficiently large. 
  
  - For $a_N \le |t| \le \eps \sqrt{\log N}$, we apply Lemma \ref{l.medium},  which yields
 \begin{equation}
\Psi_N (t)  \leq 2  e^{-\frac13 \g t^2}
\end{equation}
 Thus 
  \begin{equation}
 \int_{a_N \le |t| \le \eps \sqrt{\log N}} \left| \Psi_N (t) - e^{-\frac 12 \g t^2} \right| \,dt 
 \leq 3 \int_{a_N}^\infty e^{-\frac13 \g t^2} \, dt
 \leq
 \frac{C}{(\log N)^\frac 12}
\end{equation}
for $N$ sufficiently large. 

 Combine with the estimates for $|t| \ge \eps \sqrt{\log N}$ in the qualitative proof above,  we conclude $|g_N(x) - \frac{1}{\sqrt{2\pi}} e^{-\frac1{2\g} x^2} | =O((\log N)^{-\frac 12})$.
  \end{proof}

 Lemma \ref{l.small} and \ref{l.medium} will be proved in Section \ref{s.smallmedium},  whereas Lemma 
 \ref{l.MW} will be proved in Section \ref{s.MW}.

 \section{Preliminaries and Notation}
 \label{s.prelim}
 Given a set $U \subset Q_R$,  we let~$\mathcal{E}(U)$ denote the set of directed edges on~$U$ and $U^\circ$  the \emph{interior} of $U$.  Define $\Omega_0(U)$ to be the set of functions $\phi:U \to \R$ such that $\phi=0$ on $\partial U$.  Given $e=(x,y)\in \mathcal{E}(U)$ and $\phi\in \R^U$, we define $\nabla \phi(e):= \phi(y) - \phi(x)$.  The formal adjoint~$\nabla^*$ of~$\nabla$, which is the discrete version of the negative of the divergence operator, is defined for functions~$\g:\mathcal{E}(U)\to \R$ by 
\begin{equation}
\left( \nabla^*\g\right)(x)
:=
\sum_{e\ni x} \g(e) , 
\quad x\in U^\circ.
\end{equation}

The average of a function $f: U\to\R$ on $U$ is denoted as $(f)_U := \frac 1{|U|} \sum_{x\in U} f(x) $. 

We define, for each $x\in U$, the basis element $\omega_x\in \Omega_0(U)$ by 
\begin{equation*}
\omega_x(y):= \left\{ 
\begin{aligned}
& 1 & \mbox{if} \ x=y,\\
& 0 & \mbox{if} \ x\neq y,
\end{aligned}
\right. 
\end{equation*}
and the differential operator~$\partial_x$ by 
\begin{equation}
\label{e.partial}
\partial_x u (\phi):= \lim_{h\to 0} \frac1h \left( u(\phi+h\omega_x) - u(\phi) \right).
\end{equation} 

Define $L^p(\mu)$ to be the set of measurable functions $u:\Omega_0(U) \to \R$ such that 
\begin{equation*}
\left\| u \right\|_{L^p(\mu)} 
: = \left( \int_\Omega \left| u(\phi)\right|^p \,d\mu(\phi) \right)^{\frac1p}
< +\infty.
\end{equation*}
We define $H^1(\mu)$ to be 
\begin{equation*}
\left\| u \right\|_{H^1(\mu)}
:=
\left(
\left\| u \right\|_{L^2(\mu)}^2 
+
\sum_{x\in U^\circ} \left\| \partial_x u \right\|_{L^2(\mu)}^2
\right)^{\frac12}.
\end{equation*}
We let $H^{-1}(\mu)$ denote the dual space of~$H^1(\mu)$, that is, the closure of~$C^\infty(\Omega_0(U))$ functions under the norm
\begin{equation*}
\left\| 
w
\right\|_{H^{-1}(\mu)}:= 
\sup
\left\{ 
\int_\Omega u(\phi) w(\phi) \,d\mu(\phi)
\,:\,
u\in H^1(\mu), \ \left\| u \right\|_{H^1(\mu)} \leq 1
\right\}. 
\end{equation*}
We define the space $L^2(U,\mu)= L^2(U;L^2(\mu))$ to be the set of measurable functions $u:U\times \Omega_0(U) \to \R$ with respect to the norm 
\begin{equation*}
\left\| u \right\|_{L^2(U,\mu)}
:=
\left(
\sum_{x\in U} \left\| u(x,\cdot) \right\|_{L^2(\mu)}^2 \right)^{\frac12}.
\end{equation*}
We also define $H^1(U,\mu)$ by the norm
\begin{equation*}
\left\| u \right\|_{H^1(U,\mu)}
:=
\left( 
\sum_{x\in U} 
\left\| u(x,\cdot) \right\|_{H^1(\mu)}^2
+
\sum_{e\in \mathcal{E}(U)} 
\left\| \nabla u(e,\cdot) \right\|_{L^2(\mu)}^2 
\right)^{\frac12} 
\end{equation*}
The subset $H^1_0(U,\mu) \subseteq H^1(U,\mu)$ consists of those functions $u\in H^1(U,\mu)$ which satisfy
$u(x,\phi) = 0$ for every $\partial U\times \Omega_0(U)$. 

We define $H^{-1}(U,\mu)$ to be the dual space of $H^1_0(U,\mu)$. 
That is, $H^{-1}(U,\mu)$ is the closure of smooth functions
with respect to the norm
\begin{equation*}
\left\| 
w
\right\|_{H^{-1}(U,\mu)}:= 
\sup
\left\{ 
\sum_{x\in U} \int_{\Omega_0(U)} u(x,\phi) w(x,\phi) \,d\mu(\phi)
\,:\,
u\in H^1_0(U,\mu), \ \left\| u \right\|_{H^1(U,\mu)} \leq 1
\right\}. 
\end{equation*}
It is sometimes convenient to work with the volume-normalized versions of the $L^2$ and Sobolev norms, defined by 
\begin{equation*}
\left\| u \right\|_{\underline{L}^2(U,\mu)}
:=
\left(
\frac1{|U|}
\sum_{x\in U} \left\| u(x,\cdot) \right\|_{L^2(\mu)}^2 \right)^{\frac12},
\end{equation*}
\begin{equation*}
\left\| u \right\|_{\underline{H}^1(U,\mu)}
:=
\left( 
\frac1{|U|}
\sum_{x\in U} 
\left\| u(x,\cdot) \right\|_{H^1(\mu)}^2
+
\frac1{|U|}\sum_{e\in\mathcal{E}(U)} 
\left\| \nabla u(e,\cdot) \right\|_{L^2(\mu)}^2 
\right)^{\frac12},
\end{equation*}
\begin{multline*}
\left\| 
w
\right\|_{\underline{H}^{-1}(U,\mu)}
\\
:= 
\sup
\left\{ 
\frac1{|U|}\sum_{x\in U} \int_{\Omega_0(U)} u(x,\phi) w(x,\phi) \,d\mu(\phi)
\,:\,
u\in H^1_0(U,\mu), \ \left\| u \right\|_{\underline{H}^1(U,\mu)} \leq 1
\right\}. 
\end{multline*}

We notice that the formal adjoint of~$\partial_x$ with respect to $\mu_{N}$, which we denote as $\partial_x^*$, is given by 
\begin{equation*}
\partial_x^* w := -\partial_x w 
+
\sum_{y\sim x} 
V'(\phi(y)-\phi(x) - \xi \cdot (y-x)) w(\phi).
\end{equation*}
This can be easily checked by the identity for all $u,v \in H^1(\mu_{N})$ that
\begin{equation*}
\left\langle (\partial_x u) v \right\rangle_{\mu_{N}}
= \left\langle u (\partial^*_x v)  \right\rangle_{\mu_{N}}.
\end{equation*}
We also have the commutator identity
\begin{equation}
\label{e.commutator}
\left[ \partial_x, \partial_y^* \right]  
=
- \indc_{\{x \sim y\}}V''\big( \phi(y) - \phi(x) - \xi \cdot (y-x) \big)
+ \indc_{\{x=y\}} \sum_{e \ni x} V''\left( \nabla \phi(e) - \nabla \ell_\xi(e)\right).
\end{equation}
Define the Witten Laplacian $\L_{\mu_N}$ as
\begin{equation*}
\L_{\mu_N} F = - \sum_{x\in Q_N^\circ} \partial_x^* \partial_x F, 
\end{equation*}

For every cube $Q \subseteq\Zd$ and $u,v\in H^1(Q,\mu)$, we define 
\begin{equation}
\label{e.defBU}
\mathsf{B}_{\mu, Q}\left[u,v\right]
:= 
\frac{1}{|Q|}
\sum_{y\in Q_N^\circ} 
\sum_{x\in Q^\circ}
\left( \partial_y u(x,\cdot), \partial_y v(x,\cdot)\right)_{\mu} 
+ \frac1{|Q|}\sum_{e\in \mathcal{E}(Q)} \left\langle 
 \nabla u(e,\cdot) V''(e,\cdot) \nabla v(e,\cdot) 
\right\rangle_{\mu}
\end{equation}
and
\begin{align*}
\mathsf{E}_{\mu, Q,\f} \left[u\right]
& 
:=
\frac12 \mathsf{B}_{\mu, Q}\left[u,u\right]
- \frac1{|Q|}\sum_{e\in \mathcal{E}(Q)}
\left\langle \f(e,\phi) \nabla u(e,\cdot)  \right\rangle_{\mu} .
\end{align*}
 
 For $D\subset Q_R$,  and $f: \partial D \to \R$, define the
$\nabla \phi$ measure on $D$ with Dirichlet boundary condition $f$ by 
\begin{equation}
\label{e.GLD}
d\mu _{D}^{f}=Z_{D}^{-1}\exp \left[ -\sum_{x\in D}\sum_{i=1}^{2}V\left(
\nabla _{i}\phi \left( x\right) \right) \right] \prod_{x\in D\backslash
\partial D}d\phi \left( x\right) \prod_{x\in \partial D}\delta _{0}\left(
\phi \left( x\right) -f\left( x\right) \right) .  
\end{equation}

Here $%
Z_{D} $ is the normalizing constant ensuring that $\mu _{D}^f$ is a
probability measure. We denote
expectation and variance with respect to $\mu _{D}^f$ by $\E^{D,f}$ and $\var_{D,f}$,
respectively.
 
 We finally present the Brascamp-Lieb inequality \cite{BL,NS}, which states that the variance of observables with respect to a log-concave measure is dominated by that of a Gaussian measure. We denote the Green function for the discrete Laplacian with zero Dirichlet boundary conditions in~$Q_L$ by $G_{Q_L}(x,y)$.

\begin{proposition}[Brascamp-Lieb inequality for $\mu_L$]
\label{p.BL} 
For every $F\in H^1(\mu_L)$,
\begin{equation}
\label{e.BL.var}
\var_{\mu_L} \left[ F \right] 
\leq 
\frac1\lambda 
\sum_{x,y \in Q_L^\circ}G_{Q_L}(x,y) \left\langle \left( \partial
_{x}F\right) \left( \partial _{y}F\right) \right\rangle_{\mu_L}.
\end{equation}
\item
For every $f \in \R^{Q_L}$, we have 
\begin{equation}
\label{e.BL.linexp}
\log \left \langle  \exp (t \sum_{y\in Q_L} \phi(y)f(y) ) \right \rangle_{\mu_L} 
\leq 
\frac{t^{2}}{2\lambda} \sum_{x,y \in Q_L^\circ}
G_{Q_L}(x,y) f(x)f(y)
\end{equation}
\end{proposition}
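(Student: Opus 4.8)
The plan is to deduce both estimates from the Helffer--Sj\"ostrand representation for the log-concave measure $\mu_L$ (the route best suited to the machinery set up above; the classical Pr\'ekopa/semigroup proofs would also work), treating the variance bound \eqref{e.BL.var} as the fundamental inequality and obtaining \eqref{e.BL.linexp} from it by exponential tilting. For the second part I would set $F_f(\phi):=\sum_{y\in Q_L}\phi(y)f(y)=\sum_{y\in Q_L^\circ}\phi(y)f(y)$ and $\psi(t):=\log\langle e^{tF_f}\rangle_{\mu_L}$ (finite by Gaussian domination); then $\psi(0)=0$ and $\psi'(0)=\langle F_f\rangle_{\mu_L}=0$ by the $\phi\mapsto-\phi$ symmetry coming from hypothesis~(i), while $\psi''(t)=\var_{\mu_t}[F_f]$ for the tilted measure $d\mu_t\propto e^{tF_f}\,d\mu_L$, whose Hamiltonian differs from that of $\mu_L$ only by the linear term $-tF_f$ and therefore has the same Hessian lower bound $\nabla^*\mathrm{diag}(V'')\nabla\ge\lambda(-\Delta_{Q_L})$. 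Applying \eqref{e.BL.var} to $\mu_t$ with $\partial_xF_f=f(x)$ gives $\psi''(t)\le\tfrac1\lambda\sum_{x,y\in Q_L^\circ}G_{Q_L}(x,y)f(x)f(y)$, and Taylor's formula with integral remainder then yields \eqref{e.BL.linexp}. So it suffices to prove \eqref{e.BL.var}.

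For \eqref{e.BL.var} I would use the Witten Laplacian $\L_{\mu_L}$. First, solve $-\L_{\mu_L}u=g$, i.e.\ $\sum_{x\in Q_L^\circ}\partial_x^*\partial_x u=g$, where $g:=F-\langle F\rangle_{\mu_L}$; pairing the equation with $g$ and integrating by parts gives
\begin{equation*}
\var_{\mu_L}[F]=\langle g^2\rangle_{\mu_L}=\sum_{x\in Q_L^\circ}\langle (\partial_x g)(\partial_x u)\rangle_{\mu_L}=\sum_{x\in Q_L^\circ}\langle (\partial_x F)(\partial_x u)\rangle_{\mu_L}.
\end{equation*}
Next, differentiate the equation: since the $\partial_x$ commute with one another, applying $\partial_y$ and using the commutator identity \eqref{e.commutator} (with the tilt $\xi$ set to $0$) shows that $w(x,\cdot):=\partial_x u$ satisfies
\begin{equation*}
\partial_y F=\big(-\L_{\mu_L}w\big)(y,\cdot)+\mathbf{L}_V w(y,\cdot),\qquad y\in Q_L^\circ,
\end{equation*}
where $\mathbf{L}_V=\nabla^*\big(V''(\nabla\phi)\,\nabla(\cdot)\big)$ is the discrete elliptic operator in the spatial variable with coefficients $V''(\nabla\phi(e))$ on edges $e$, and $w$ is extended by $0$ on $\partial Q_L$.

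The key step is the energy estimate. Testing the previous display against $w(y,\cdot)$ and summing over $y\in Q_L^\circ$, the $-\L_{\mu_L}$ term contributes $\sum_{x,y}\|\partial_x\partial_y u\|_{L^2(\mu_L)}^2\ge 0$, while $V''\ge\lambda$ makes the $\mathbf{L}_V$ term contribute at least $\lambda\langle(-\Delta_{Q_L})w,w\rangle$, with $-\Delta_{Q_L}=\nabla^*\nabla$ the Dirichlet Laplacian on $Q_L$; since the left-hand side equals $\sum_y\langle(\partial_yF)(\partial_y u)\rangle_{\mu_L}=\var_{\mu_L}[F]$, we get $\langle(-\Delta_{Q_L})w,w\rangle\le\tfrac1\lambda\var_{\mu_L}[F]$. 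Finally, Cauchy--Schwarz in the inner product induced by $-\Delta_{Q_L}$ (pointwise in $\phi$), then Cauchy--Schwarz in $L^2(\mu_L)$, together with $(-\Delta_{Q_L})^{-1}=G_{Q_L}$, yield
\begin{equation*}
\var_{\mu_L}[F]=\sum_{x\in Q_L^\circ}\langle(\partial_xF)\,w(x,\cdot)\rangle_{\mu_L}\le\Big(\sum_{x,y\in Q_L^\circ}G_{Q_L}(x,y)\langle(\partial_xF)(\partial_yF)\rangle_{\mu_L}\Big)^{1/2}\Big(\tfrac1\lambda\var_{\mu_L}[F]\Big)^{1/2},
\end{equation*}
and dividing through proves \eqref{e.BL.var}.

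\textbf{Main obstacle.} The two integrations by parts and the two uses of Cauchy--Schwarz are routine; the only genuinely delicate point is the solvability and regularity of $-\L_{\mu_L}u=g$, which borders on circular since invertibility of $\L_{\mu_L}$ on mean-zero functions is itself a Poincar\'e-type statement. Because $\phi$ is pinned to $0$ on $\partial Q_L$, $\mu_L$ is a smooth log-concave probability measure on $\R^{Q_L^\circ}$ whose Witten Laplacian has a spectral gap above its simple zero eigenvalue (the constants), so $u$ exists, is unique up to an additive constant, and is smooth. To keep things self-contained I would first prove \eqref{e.BL.var} for $F\in C_c^\infty(\Omega_0(Q_L))$ using the regularized equation $(-\L_{\mu_L}+\epsilon)u_\epsilon=g$, run the estimates above with constants uniform in $\epsilon$, let $\epsilon\downarrow0$, and then extend to general $F\in H^1(\mu_L)$ by density, noting that both sides of \eqref{e.BL.var} are continuous for the $H^1(\mu_L)$-norm since $\|G_{Q_L}\|_{\mathrm{op}}<\infty$.
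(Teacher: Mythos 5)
The paper does not prove \pref{p.BL}: it states it with citations to \cite{BL,NS} and moves on, since this is a classical result. So there is no ``paper's proof'' to compare against; what I can do is assess your argument on its own terms and relate it to the standard literature.

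Your proof is correct and is, in essence, the Naddaf--Spencer / Helffer--Sj\"ostrand route (the one taken in \cite{NS}), as opposed to Brascamp and Lieb's original convexity argument in \cite{BL}. The four steps are all sound: (a) writing $\var_{\mu_L}[F]=\sum_x\langle(\partial_xF)(\partial_x u)\rangle_{\mu_L}$ via the Poisson equation $-\L_{\mu_L}u=F-\langle F\rangle_{\mu_L}$; (b) applying $\partial_y$ and using the commutator identity \eqref{e.commutator} with $\xi=0$ to show that $w=\partial u$ solves the Helffer--Sj\"ostrand equation $(-\L_{\mu_L}+\nabla^*V''\nabla)w=\partial F$ with Dirichlet data; (c) the energy estimate, which after discarding the nonnegative $\sum_{x,y}\|\partial_x\partial_y u\|^2$ term and using $V''\ge\lambda$ gives $\lambda\langle\langle(-\Delta_{Q_L})w,w\rangle\rangle_{\mu_L}\le\var_{\mu_L}[F]$; (d) the double Cauchy--Schwarz (spatially in the $G_{Q_L}$-inner product, then in $L^2(\mu_L)$), with $(-\Delta_{Q_L})^{-1}=G_{Q_L}$, which closes the inequality. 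The deduction of \eqref{e.BL.linexp} from \eqref{e.BL.var} by exponential tilting is also standard and correct; the observation that $\psi'(0)=0$ uses the symmetry hypothesis (i), and the key point that the tilted measure still has Hessian bounded below by $\lambda(-\Delta_{Q_L})$ is right since a linear tilt leaves the Hessian unchanged.

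Your flagged ``main obstacle'' about solvability of $-\L_{\mu_L}u=g$ is a fair concern but is not really circular, and is handled cleanly by your $\epsilon$-regularization with uniform constants (or, alternatively, by first restricting to $g$ in $\mathrm{ran}(-\L_{\mu_L})$, which is dense in the mean-zero subspace of $L^2(\mu_L)$ by self-adjointness, and then closing by $H^1(\mu_L)$-density since $\|G_{Q_L}\|_{\mathrm{op}}<\infty$ makes both sides of \eqref{e.BL.var} continuous). One small thing worth making explicit in step (d): the spatial Cauchy--Schwarz uses that $\sum_{x,y}G_{Q_L}(x,y)(\partial_xF)(\phi)(\partial_yF)(\phi)\ge0$ pointwise in $\phi$, which holds because $G_{Q_L}$ is positive semi-definite; otherwise the subsequent $L^2(\mu_L)$ Cauchy--Schwarz would not be directly applicable.
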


We sometimes denote by $\mu_{G,D}^f$ the finite volume Gaussian measure in $D$ (i.e.,  the special case of \eqref{e.GLD} with $V(x) = \frac 12 x^2$).  We denote the corresponding expectation 
and variance by $\E^{G,D,f}$ and $\var_{G,D,f}$ respectively.  When $f=0$ we will omit its appearence on the supercripts. 
 
 \section{Quantitative convergence of the variance}
 \label{s.CLT}
 A main ingredient for the refined estimate of $\Psi_N$, defined in \eqref{e.Psi} is the following convergence of the variance of the linear statisitcs of $\nabla \phi$, with an algebraic rate. 
 
  \begin{theorem}
[{Quantitative convergence of variance}]
\label{t.qclt}
Fix $R \in [1,\infty)$.  Let $\phi$ be sampled from the finite volume Gibbs measure $\mu_R$ with zero boundary condition \eqref{e.GL}.
Let $f_R: Q_R \to \R$ and  $f\in L^2([0,1]^2)$  be such that there exists $\alpha>0$,  so that  $\|\nabla^*\cdot f_R(\frac \cdot R) - f(\cdot)\|_{L^\infty([0,1]^2)} \le R^{-\alpha}$.
Define the random variable 
\begin{equation*}
\Phi_R \left(f\right) 
:=
R^{-d/2}\sum_{e\in \mathcal E( Q_R)} \nabla \phi(e) \cdot f_R\left( e\right).
\end{equation*}
Then there exists $\g= \g_{V,f}>0$, 
 $\beta =\beta(d, \alpha, \lambda, \Lambda) \in \left(0,\tfrac12\right]$ and $C_0(d, \lambda, \Lambda) < \infty$  such that, for every $R\in [1,\infty)$, 
\begin{equation*}
\left| \var_{\mu_R} [\Phi_R \left(f \right) ]  - \g \right|
\leq 
C_0 R^{-\beta } \|\nabla^*\cdot f_R\|_{L^\infty} . 
\end{equation*}

  \end{theorem}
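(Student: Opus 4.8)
The plan is to reduce the statement to a quantitative homogenization estimate for the Helffer--Sj\"ostrand (Witten Laplacian) PDE, following the strategy of \cite{AW} and \cite{AKMBook}. First I would recall the Helffer--Sj\"ostrand representation of the covariance: for the linear statistic $\Phi_R(f)$ one has
\begin{equation*}
\var_{\mu_R}[\Phi_R(f)] = R^{-d}\sum_{e,e'\in\mathcal E(Q_R)} f_R(e)\,\bigl\langle \nabla u_e(e',\cdot)\bigr\rangle_{\mu_R},
\end{equation*}
where $u_e$ solves the Helffer--Sj\"ostrand equation $\mathcal L_{\mu_R}u_e + (\text{a second-order spatial operator with coefficient }V''(\nabla\phi(e)))\,u_e = (\text{source from }f_R)$. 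Equivalently, $\var_{\mu_R}[\Phi_R(f)]$ is the energy $\mathsf E_{\mu,Q_R,\f}$-type quantity minimized over $H^1_0(Q_R,\mu_R)$ with $\f$ built from $f_R$; this is exactly the bilinear form $\mathsf B_{\mu,Q}$ introduced in \eqref{e.defBU}. The point is that this variational problem is a (random, infinite-dimensional) uniformly elliptic problem whose coefficient field $V''(\nabla\phi(e))$ is stationary and has finite range of dependence properties inherited from $\mu_R$.

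Next I would invoke the quantitative homogenization theory: the coefficients $a(e)=V''(\nabla\phi(e))$, $\lambda\le a\le\Lambda$, define a homogenized (constant, by symmetry and the $\Z^2$ structure, scalar) matrix $\ahom$, and the subadditive-quantity machinery of \cite{AKM,AKMBook} --- in the form adapted to the $\nabla\phi$ setting in \cite{AW} --- gives an algebraic rate $R^{-\beta}$ for the convergence of the relevant energy quantities $\mu(Q_R,\f)\to \frac12\f\cdot\ahom^{-1}\f$ (or the dual statement). Concretely I would: (1) show the finite-volume energy $\mathsf E_{\mu,Q_R,\f}[u_{\min}]$, after the appropriate normalization, equals $-\frac12\var_{\mu_R}[\Phi_R(f)]$ up to boundary-layer errors controlled by $\|\nabla^*\cdot f_R\|_{L^\infty}$; (2) apply the quantitative two-scale expansion / subadditivity estimates to replace the heterogeneous energy by the homogenized one with error $C_0 R^{-\beta}\|\nabla^*\cdot f_R\|_{L^\infty}$; (3) identify the limiting constant as $\g=\g_{V,f}:=\frac12\int_{[0,1]^2} f\cdot\ahom^{-1}f$ (or its natural analogue determined by $\ahom$ and the macroscopic test function $f$), using the hypothesis $\|\nabla^*\cdot f_R(\tfrac\cdot R)-f\|_{L^\infty}\le R^{-\alpha}$ to pass from the discrete to the continuum object with the same algebraic rate. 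The Brascamp--Lieb inequality of \pref{p.BL} is used throughout to control the $L^2(\mu)$ and $H^{-1}(\mu)$ norms of the correctors and the solution $u_e$ by their Gaussian analogues, which is what makes the elliptic estimates uniform in $N$ (this is the $d=2$ logarithm-free statement, since here one is looking at the gradient, not the field itself).

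The main obstacle, as usual in this circle of ideas, is step (2): obtaining the algebraic rate of homogenization for the Witten/Helffer--Sj\"ostrand operator rather than for a classical divergence-form equation. The difficulty is that the coefficient $V''(\nabla\phi(e))$ is a nonlinear functional of the field, the "spatial" operator is coupled to the infinite-dimensional Witten Laplacian $\mathcal L_{\mu_N}$, and one must run the subadditive-quantities argument (or the Meyers-type and Caccioppoli estimates, then iterate over dyadic scales) in this augmented $H^1(U,\mu)$ functional-analytic framework. I expect to handle this exactly as in \cite{AW}: define the subadditive quantities $\mu(U,\f)$ and $\mu^*(U,\f^*)$ via the energies $\mathsf E_{\mu,U,\f}$, prove their approximate subadditivity and finite-range decorrelation (using the product structure of $\mu_N$ over well-separated regions), deduce a rate for $\E[\mu(Q_R,\f)]-\frac12\f\cdot\ahom^{-1}\f$ by the standard iteration, and then transfer this to $\var_{\mu_R}[\Phi_R(f)]$ via the identification in step (1). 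Everything else --- the boundary-layer bookkeeping, the passage from $\nabla^*\cdot f_R$ to $f$, and the constant identification --- is routine once the quantitative homogenization input is in place.
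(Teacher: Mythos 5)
Your overall strategy matches the paper's: reduce the statement to a quantitative homogenization estimate for the Helffer--Sj\"ostrand PDE via the variational representation \eqref{e.variational}, import the convergence-of-subadditive-energies machinery of \cite{AW}, run a two-scale expansion (which is the content of Theorem~\ref{t.HS}), and identify the limiting constant $\g$ from the homogenized operator. Two points in your write-up, however, deserve attention.

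First, a conceptual inaccuracy that would bite if you tried to execute the argument as stated: you assert the coefficient field $V''(\nabla\phi(e))$ ``is stationary and has finite range of dependence properties inherited from $\mu_R$'' and propose to prove ``finite-range decorrelation (using the product structure of $\mu_N$ over well-separated regions).'' The $\nabla\phi$ measure has no finite-range structure and no product structure; gradient-gradient correlations decay only polynomially in $d=2$. The decorrelation mechanism that actually powers \cite{AW}, and that the paper uses in Lemma~\ref{l.varest} to show concentration of the spatial average of the flux, is the \emph{spectral gap} in the form of the Brascamp--Lieb inequality combined with decay of the discrete Green's function $\nabla_x\nabla_y G_{\cu_m}(x,y)\lesssim |x-y|^{-2}$ and a localization of the minimizer to subcubes. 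Your later sentence invoking Brascamp--Lieb ``throughout'' points toward the right tool, but the finite-range framing would lead you into a dead end if followed literally.

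Second, a small but not cosmetic point in the identification of $\g$: you propose $\g=\tfrac12\int f\cdot\ahom^{-1}f$, whereas the correct formula (see the remark after Theorem~\ref{t.qclt} and the computation following Theorem~\ref{t.HS}) is $\g=\int_{[0,1]^2\times[0,1]^2} f(x)\,(\nabla^*\cdot\ahom\nabla)^{-1}(x,y)\,f(y)\,dx\,dy$, i.e.\ it involves the inverse of the homogenized \emph{operator} with Dirichlet boundary condition on the unit square, not just the inverse of the matrix $\ahom$. Since $f$ is not constant this distinction matters: one must solve the homogenized Dirichlet problem \eqref{e.homog} and pair the solution with $f$. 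You hedged with ``or its natural analogue,'' but when fleshing out step (3) you need the full boundary-value problem, which is also where the hypothesis $\|\nabla^*\cdot f_R(\tfrac{\cdot}{R})-f\|_{L^\infty}\le R^{-\alpha}$ is consumed (to pass from a Riemann sum against $\nabla u$ to the continuum integral with rate $R^{-\alpha}$).
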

  
  \begin{remark}
  The central limit theorem for the $\nabla \phi$ model, i.e., the convergence of $\Phi_R$ in distribution to a normal random variable,  was established in \cite{NS, GOS, Mi},  without quantifying the rate of convergence. \end{remark}
  \begin{remark}
  It will be clear from Theorem \ref{t.HS} below that $\mathsf g$ can be explicity written as 
 \begin{equation*}
 \mathsf g = \int_{[0,1]^2 \times [0,1]^2} f(x) (\nabla^*\cdot \ahom \nabla )^{-1} (x,y) f(y) \, dxdy,
 \end{equation*}
 for some positive definite matrix $\ahom = \ahom (V)$. 
  \end{remark}

Theorem \ref{t.qclt} follows from homogenization of an elliptic PDE based on the convergence results of  \cite{AW},  as we explain below.  The starting observation is the variational characterization of the variance,  known as the Helffer-Sj\"ostrand representation (see \cite{NS,  AW}), which gives 
\begin{equation}
\label{e.variational}
\var_{\mu_R} [R^{-d/2}\sum_{e\in \mathcal E( Q_R)} \nabla \phi(e) \cdot f_R\left( e\right) ]   
=
-2 \inf_{w \in H^1_0(Q_R,\mu_{R})} \mathsf{E}_{\mu_{R},Q_R,f_R} \left[w\right],
\end{equation}
where we let $\mathsf{E}_{\mu ,U,f} \left[\cdot \right]$ denote the energy functional
\begin{align*}
\mathsf{E}_{\mu,U,f} \left[w\right]
& 
:=
\frac12 \sum_{y\in Q} \sum_{x\in U^\circ} \left\langle (\partial_y w(x,\cdot) )^2 \right\rangle_{\mu }
+
\frac12 \sum_{e\in \mathcal{E}(U)} 
\left\langle V''(e) (\nabla w(e,\cdot))^2  \right\rangle_{\mu } 
\\ & \qquad 
- \sum_{x\in U^\circ} \left\langle f(x,\cdot) w(x,\cdot) \right\rangle_{\mu }.
\end{align*}
The minimizer of \eqref{e.variational} can be written as $R^{-\frac d2} u_R$, where $u_R$ solves the Helffer-Sj\"ostrand PDE
\begin{equation}
\label{e.HS}
\left\{ 
\begin{aligned}
& 
-\L_{\mu}u_R + \nabla^*\cdot V'' \nabla u_R 
 =  \nabla^* \cdot f_R
\quad &\mbox{in} & \ Q_R\times \Omega_0(Q_R)
\\ & 
u_R = 0& \mbox{on} & \ \partial Q_R \times\Omega_0(Q_R),
\end{aligned}
\right.
\end{equation}
and by testing \eqref{e.HS} with $u_R$ and integration by parts,  we may rewrite the energy functional  $ \mathsf{E}_{\mu_{R},Q_R,f_R} \left[u_R\right]$, and thus \eqref{e.variational} as \cite{NS,AW}
\begin{equation*}
\var_{\mu_R} [R^{-d/2}\sum_{x\in Q_R} \nabla \phi(x) \cdot f_R\left(x \right) ]   
=
\sum_{x\in Q_R} R^{-d} \left\langle  f_R\left(x \right) \nabla u_R  (x)  \right\rangle_{\mu_R}
\end{equation*}

Therefore Theorem \ref{t.qclt} follows from the quantitative homogenization of the Hellfer-Sj\"ostrand equation \eqref{e.HS},  presented below. 

\begin{theorem}
\label{t.HS}
Suppose that $f_R, f$ satisfy the conditions in Theorem \ref{t.qclt}, and let $\a$ be the diagonal matrix with $\a(e,e) = \mathsf V'' (\nabla \phi(e))$, where $\nabla\phi$ is sampled from the  Gibbs measure $\mu_R$ \eqref{e.GL}. Let $u_R, u$ denote respectively the solution to the equations: 
\begin{equation}
\left\{ 
\begin{aligned}
& 
-\L_{\mu}u_R + \nabla^*\cdot \a \nabla u_R 
 =  \nabla^* \cdot f_R
\quad &\mbox{in} & \ Q_R\times \Omega_0(Q_R)
\\ & 
u_R = 0& \mbox{on} & \ \partial Q_R \times\Omega_0(Q_R),
\end{aligned}
\right.
\end{equation}
and 
\begin{equation}
\label{e.homog}
\left\{ 
\begin{aligned}
& 
-\nabla \cdot \ahom \nabla u = f \quad &\mbox{in}& \ [0,1]^2
\\ & 
u = 0& \mbox{on} & \ \partial ([0,1]^2).
\end{aligned}
\right.
\end{equation}

Then there exists $\beta = \beta(d,\lambda,\Lambda)>0$, such that 
\begin{equation}
\left\| u_R(R \cdot ) -u(\cdot)\right\|_{ L^2(\frac 1R Q_R, \mu_R)} + \left\| \nabla u_R( R \cdot ) -\nabla u(\cdot)\right\|_{H^{-1}(\frac 1R Q_R, \mu_R)} 
\leq 
CR^{-\alpha}  \|\nabla^*\cdot f_R\|_{L^\infty}  .
\end{equation}

\end{theorem}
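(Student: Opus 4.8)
The plan is to read \eqref{e.HS} --- with the random diagonal field $\a(e,e)=\mathsf V''(\nabla\phi(e))$ in place of $V''$ --- as a uniformly elliptic homogenization problem on the product space $Q_R\times\Omega_0(Q_R)$, quote the quantitative homogenization theory developed for precisely this operator in \cite{AW}, and then upgrade its corrector-level conclusions to a comparison estimate for solutions with a general right-hand side by running the two-scale expansion of \cite{AKMBook}. First I would record the functional-analytic backbone: by uniform convexity (condition (ii)) the Helffer--Sj\"ostrand bilinear form $\mathsf B_{\mu_R,Q_R}$ of \eqref{e.defBU} is bounded and coercive on $H^1_0(Q_R,\mu_R)$ with constants depending only on $\lambda,\Lambda$, so \eqref{e.HS} has a unique solution obeying the energy estimate $\|u_R\|_{\underline H^1(Q_R,\mu_R)}\le C\|\nabla^*\!\cdot f_R\|_{\underline H^{-1}(Q_R,\mu_R)}$; the Brascamp--Lieb inequality (\pref{BL}) supplies the additional control of the $\phi$-dependence that makes the $\L_{\mu}$-part of the operator amenable to the same machinery. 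Combined with stationarity and ergodicity of the infinite-volume gradient Gibbs measure, this is exactly the framework in which \cite{AW} constructs the deterministic positive-definite matrix $\ahom=\ahom(V)$ and proves, for some exponent $\beta=\beta(d,\lambda,\Lambda)>0$, algebraically sublinear bounds on the first-order correctors $\chi_1,\chi_2$ and on the associated flux correctors of the Helffer--Sj\"ostrand equation. I take these corrector estimates as the black box.

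Second, for a fixed smooth profile $\bar f$ on $[0,1]^2$, let $\bar u$ solve $-\nabla\cdot\ahom\nabla\bar u=\bar f$ with zero Dirichlet data, and form the two-scale ansatz
\[
w_R(x):=\eta(x/R)\,\bar u(x/R)+R^{-1}\sum_{i=1}^{2}\chi_i(x)\,(\partial_i\bar u)(x/R),
\]
where $\eta$ is a cutoff vanishing in an $R^{1-\kappa}$-neighborhood of $\partial Q_R$. Inserting $w_R$ into \eqref{e.HS} and using the sublinearity of $\chi_i$ and of the flux correctors, one checks that $w_R$ solves \eqref{e.HS} up to an error that is $O(R^{-\beta})$ in $\underline H^{-1}(Q_R,\mu_R)$; coercivity then yields $\|u_R-w_R\|_{\underline H^1(Q_R,\mu_R)}\le CR^{-\beta}$. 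Since $R^{-1}\chi_i(\partial_i\bar u)(\cdot/R)$ is $O(R^{-\beta})$ in $\underline L^2$ while its gradient differs from zero only by the $\underline H^{-1}$-small flux-corrector term, discarding it gives $\|u_R(R\cdot)-\bar u\|_{\underline L^2}+\|\nabla u_R(R\cdot)-\nabla\bar u\|_{\underline H^{-1}}\le CR^{-\beta}$ on $\tfrac1R Q_R$; the boundary layer introduced by $\eta$ contributes a further $O(R^{-\kappa/2})$, handled exactly as for a polygonal domain in \cite{AKMBook}.

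Third, I would remove the smoothness hypothesis on the data. Given $f_R$ and $f$ with $\|\nabla^*\!\cdot f_R(\cdot/R)-f\|_{L^\infty([0,1]^2)}\le R^{-\alpha}$, mollify $f$ at a scale comparable to $R^{-1}$ to get a smooth $\bar f$, let $\bar u$ and $\bar u_R$ be the corresponding continuum and Helffer--Sj\"ostrand solutions, apply the second step to the pair $(\bar u_R,\bar u)$, and estimate $u_R-\bar u_R$ and $u-\bar u$ by the energy inequality and elliptic regularity for \eqref{e.homog}, using $\|\nabla^*\!\cdot(f_R-\bar f_R)\|_{\underline H^{-1}(Q_R,\mu_R)}\le CR^{-\min(\alpha,\beta)}$. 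Converting between the discrete norms on $\tfrac1R Q_R$ and the continuum norms on $[0,1]^2$ costs only $O(R^{-1})$ because $u_R$ and $u$ are bounded in $H^1$; collecting the terms gives the bound of \tref{HS} with exponent $\min(\alpha,\beta)$ (hence $\alpha$, after decreasing $\alpha$ if necessary), with the prefactor $\|\nabla^*\!\cdot f_R\|_{L^\infty}$ tracking the size of the data.

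I expect the genuine obstacle to be the second step in the infinite-dimensional setting. Unlike in \cite{AKMBook}, here ``$\nabla$'' is the mixed spatial/field gradient, and the coefficient field $\a$ is built from the very variable $\phi$ over which one integrates, so one must verify that the corrector and flux-corrector bounds of \cite{AW} feed into the two-scale expansion in the $\underline H^1$/$\underline H^{-1}$ topologies --- in particular that the contribution of the Witten Laplacian $\L_{\mu}$ to the expansion error is itself $O(R^{-\beta})$ in $\underline H^{-1}$. Controlling that term is where the large-scale regularity theory for the Helffer--Sj\"ostrand equation has to be combined with \pref{BL}; once it is in place, the approximation and norm-bookkeeping steps of the third paragraph are routine.
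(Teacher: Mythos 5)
Your proposal follows the same overall strategy as the paper --- a two-scale expansion around the correctors of \cite{AW}, closed by coercivity of the Helffer--Sj\"ostrand bilinear form --- but it diverges in two technical choices and, more importantly, leaves the one genuinely non-routine point open.

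On the technical side, you build the ansatz from infinite-volume correctors $\chi_i$ plus a cutoff $\eta$ near $\partial Q_R$, which forces you to estimate a boundary-layer error $O(R^{-\kappa/2})$. The paper instead uses finite-volume correctors $\chi_{e_i}^\eps = v(\cdot,\cu_m,e_i)-\ell_{e_i}$, where $v$ is the minimizer of the subadditive quantity $\nu_R$ with affine Dirichlet data. These vanish at $\partial\cu_m$ by construction, so no cutoff and no boundary-layer term is needed, and the corrector sublinearity and flux convergence come directly from \pref{convergence.muL} via Lemmas \ref{l.convergencesol} and \ref{l.convergenceflux}. Both routes are legitimate, but the finite-volume one keeps the error bookkeeping to a single exponent $\beta$ and is better matched to the Dirichlet problem at hand. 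Your third paragraph (mollification of $f$ and comparison of the two Dirichlet problems by the energy inequality) is correct but heavier than necessary: the paper folds the mismatch $\|\nabla^*\!\cdot f_R(\cdot/R)-f\|_{L^\infty}\le R^{-\alpha}$ directly into the $\underline H^{-1}$ error at the end of Step 1, using $C^{1,1}$-regularity of the constant-coefficient solution $u$ to bound $\nabla^k\nabla_j u$ by $\|f\|_{L^\infty}$.

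The real gap is the Witten-Laplacian term, which you flag as ``the genuine obstacle'' but do not resolve. Here is why it is not an obstacle once you use the Helffer--Sj\"ostrand finite-volume correctors. Since $u(x)$ has no $\phi$-dependence, $\L_\mu w^\eps = \sum_j \nabla_j u \cdot \L_\mu v(\cdot/\eps,\cu_m,e_j)$, and this combines with the leading flux term so that
\begin{equation*}
\nabla^*\!\cdot\a\nabla w^\eps - \L_\mu w^\eps
= \sum_j \nabla_j u\bigl(\nabla^*\!\cdot\a\nabla v - \L_\mu v\bigr)
+ \sum_j \nabla^*\!\cdot \nabla_j u\,\bigl(\a\nabla v - \ahom e_j\bigr)
+ f
+ \eps\,\nabla^*\!\cdot\a\Bigl(\sum_j \chi^\eps_{e_j}\nabla\nabla_j u\Bigr),
\end{equation*}
and the first sum vanishes \emph{identically} because $v$ solves $-\L_\mu v + \nabla^*\!\cdot\a\nabla v = 0$ in \eqref{e.BVP.v}. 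The $\L_\mu$ contribution therefore never appears as an error term; the only errors are the flux defect and the lower-order $\eps\chi^\eps\nabla\nabla u$ term, both controlled by Lemmas \ref{l.convergenceflux} and \ref{l.convergencesol}. If you insist on infinite-volume correctors, you need them to solve the full Helffer--Sj\"ostrand equation (not just the spatial part), so the same cancellation occurs; simply quoting sublinearity and flux bounds from \cite{AW} without recording this PDE fact leaves a hole in your Step 2.
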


Applying Theorem \ref{t.HS} and rescale the domain by $R$, we have
\begin{multline*}
\left|  \var_{\mu_R} [R^{-d/2}\sum_{x\in Q_R} \nabla \phi(x) \cdot f_R\left( x\right) ]   
-\sum_{x\in Q_R} R^{-d} f_R\left(x\right)  \nabla u\left( \frac{x}R \right)  \right|\\
\leq
 \|\nabla^*\cdot f_R \|_{ L^\infty}   \| u_R - u\left( \frac{\cdot}R \right) \|_{\underline L^2(Q_R,\mu_R)}  
\leq 
CR^{-\beta} \|\nabla^*\cdot f_R \|_{ L^\infty}
\end{multline*}

Moreover,  the limit 
\begin{equation*}
\g := \lim_{R\to \infty} \sum_{x\in Q_R} R^{-d} f_R\left(x\right)  \nabla u\left( \frac{x}R \right) 
= \int_{[0,1]^2} f(x) u(x) \, dx
= \int_{[0,1]^2 \times [0,1]^2} f(x) (\nabla^*\cdot \ahom \nabla )^{-1} (x,y) f(y) \, dxdy
\end{equation*}
exists,  by the convergence of Riemann sum to integral,  with a rate of convergence $O(R^{-\alpha})$.  Combining these estimates 
we conclude Theorem \ref{t.qclt}.

\subsection{Finite-volume energy quantities}
In this section we recall the energy quantities and their quantitative convergence results established in \cite{AW}.  As can be seen from the variational characterization,  the convergence of the energy quantities will play an essential role in the proof of Theorem \ref{t.qclt}.  
   Define the subadditive energy quantity 
  \begin{equation*}
  \nu_R(Q_R, f, p):= \inf_{w \in H^1_0(Q_R,\mu_{R,p})} \mathsf{E}_{\mu_{R,p},Q_R,f} \left[w\right],
  \end{equation*}
  where $\mu_{R,p}$ denotes the finite volume Gibbs measure in $Q_R$ with an affine boundary condition $\ell_p (x) = p\cdot x$.   In what follows we consider $f=0$ and simply write it as $\nu_R(Q_R,p)$.  As was explained in \eqref{e.variational}, the minimizer of $\nu_R(Q_R,p)$, which we denote as $v(\cdot, Q_R, p)$, solves the Helffer-Sj\"ostrand equation with an affine boundary condition: 
  \begin{equation}
\label{e.BVP.v}
\left\{ 
\begin{aligned}
& \left( -\L_{\mu} + \nabla^* \a \nabla \right) v(\cdot,Q_R,p) 
 =0
& \mbox{in} & \ Q^\circ_R \times \Omega_0(Q_R), 
\\ & 
v(\cdot,Q_R,p) - \ell_p = 0& \mbox{on} & \ \partial Q_R \times\Omega_0(Q_R),
\end{aligned}
\right.
\end{equation}

We recall the fact that~$\nu_R$  are actually quadratic polynomials for all $R\ge 1$, and one may compute the first and second variations of their defining optimization problems. The following lemma is ~\cite[Lemma 5.2]{AW}. 

\begin{lemma}
[Basic properties of $\nu_R$ ]
\label{l.basicprops}
Fix a cube~$Q \subseteq Q_R$.  The quantities $\nu_R(Q,p)$ and its optimizing functions $v(\cdot,Q,p)$ satisfies

\begin{itemize}
\item \emph{Quadratic representation.}
There exist symmetric matrices $\ahom(Q)\in \R^{d\times d}$,  such that 
\begin{equation}
\label{e.quadrep}
\nu_R(Q,p)
= \frac12 p\cdot \ahom(Q) p + \nu_R(Q, 0) \quad \forall p\in\Rd.
\end{equation}
where the matrix $\ahom(Q)$ can be characterized such that for all $p, p' \in\Rd$, 
\begin{equation}
\label{e.ahomQ}
p' \cdot \ahom(Q) p
=
\mathsf{B}_{\mu_{R,p},Q}\left[ \ell_{p'}, v(\cdot,Q,p)\right].
\end{equation}

\item \emph{First variation.} The optimizing functions are characterized as follows: $v(\cdot,Q,p)$ is the unique element of $\ell_p+H^1_0(Q,\mu_{R,p})$ satisfying 
\begin{align}
\label{e.firstvar.nu}
\mathsf{B}_{\mu_{R,p},Q} \left[ v(\cdot,Q,p) ,w \right]
=
0, \quad \forall w \in H^1_0(Q,\mu_{R,p});
\end{align}

\item \emph{Second variation.}
For every~$w\in \ell _{p}+H_{0}^{1}\left( Q,\mu_{R,p}\right)$, 
\begin{equation}
\label{e.quadresp.nu}
\mathsf{E}_{\mu_{R,p}, Q}\left[ w \right] - |Q| \nu_R(Q,p)
= 
\frac12 \mathsf{B}_{\mu_{R,p},Q} \left[ v(\cdot,Q,p) -w, v(\cdot,Q,p) -w\right] 
\end{equation}
\end{itemize}
\end{lemma}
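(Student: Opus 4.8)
The plan is to exploit the fact that $\mathsf E_{\mu_{R,p},Q}[\cdot]$ is a strictly convex quadratic functional on the affine space $\ell_p + H^1_0(Q,\mu_{R,p})$, so that all three claims are instances of the general linear-algebra identities relating a quadratic form, its unique minimizer, and its value at an arbitrary competitor. First I would record that $w\mapsto \mathsf E_{\mu_{R,p},Q}[w]$ decomposes as $\tfrac12\mathsf B_{\mu_{R,p},Q}[w,w]$ minus a linear term (here the linear term vanishes since $f=0$), where $\mathsf B_{\mu_{R,p},Q}[\cdot,\cdot]$ defined in \eqref{e.defBU} is a symmetric bilinear form, and that it is \emph{coercive} on $H^1_0(Q,\mu_{R,p})$: the potential-energy piece $\frac1{|Q|}\sum_{e}\langle \nabla u\,V''(e)\,\nabla u\rangle_\mu \ge \frac\lambda{|Q|}\sum_e \langle (\nabla u(e))^2\rangle_\mu$ by uniform convexity (ii), and the Witten-Laplacian piece $\frac1{|Q|}\sum_{y\in Q_N^\circ}\sum_{x}\langle (\partial_y u(x,\cdot))^2\rangle_\mu\ge 0$ controls the $H^1(\mu)$-directions; together with a Poincar\'e inequality on $\Omega_0(Q)$ this gives equivalence with the $\underline H^1(Q,\mu_{R,p})$-norm. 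Hence a unique minimizer $v(\cdot,Q,p)$ exists, which proves the uniqueness asserted in the first-variation bullet.

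Next I would derive the Euler--Lagrange equation: for any $w\in H^1_0(Q,\mu_{R,p})$ the function $t\mapsto \mathsf E_{\mu_{R,p},Q}[v+tw]$ is a scalar quadratic with a minimum at $t=0$, so differentiating gives $\mathsf B_{\mu_{R,p},Q}[v(\cdot,Q,p),w]=0$ for all such $w$, which is \eqref{e.firstvar.nu}; conversely, strict convexity shows this condition characterizes $v$. This simultaneously identifies $v(\cdot,Q,p)$ as the solution to the boundary-value problem \eqref{e.BVP.v}, since \eqref{e.firstvar.nu} is precisely the weak formulation of $(-\L_\mu+\nabla^*\a\nabla)v=0$ with boundary data $\ell_p$ (using the adjoint identity for $\partial_x$ and summation by parts for $\nabla$). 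For the second variation \eqref{e.quadresp.nu}, I would write an arbitrary $w\in\ell_p+H^1_0(Q,\mu_{R,p})$ as $w=v-(v-w)$ with $v-w\in H^1_0(Q,\mu_{R,p})$, expand $\mathsf E_{\mu_{R,p},Q}[w]=\tfrac12\mathsf B[v,v]-\mathsf B[v,v-w]+\tfrac12\mathsf B[v-w,v-w]$, kill the cross term by \eqref{e.firstvar.nu}, and note that $\tfrac12\mathsf B[v,v]=\mathsf E_{\mu_{R,p},Q}[v]=|Q|\,\nu_R(Q,p)$ by definition of $\nu_R$.

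For the quadratic representation \eqref{e.quadrep}, I would use linearity of the boundary-value problem in the slope: writing $v(\cdot,Q,p)=\ell_p + v_0(\cdot,Q,p)$ where $v_0$ has zero boundary data, the map $p\mapsto v(\cdot,Q,p)$ is affine (its linear part $p\mapsto \ell_p+v_0(\cdot,Q,p)$ solving the homogeneous equation, plus the $p=0$ correction $v(\cdot,Q,0)$), hence $p\mapsto \nu_R(Q,p)=|Q|^{-1}\mathsf E_{\mu_{R,p},Q}[v(\cdot,Q,p)]$ is a quadratic polynomial in $p$; its second-order part defines a symmetric matrix $\ahom(Q)$, giving \eqref{e.quadrep}, and polarizing the identity $p\cdot\ahom(Q)p = \mathsf B_{\mu_{R,p},Q}[\ell_p,v(\cdot,Q,p)]$ — obtained by testing \eqref{e.BVP.v} against $\ell_p$ and accounting for the $\nu_R(Q,0)$ shift — yields the off-diagonal characterization \eqref{e.ahomQ}. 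Symmetry of $\ahom(Q)$ follows because $\mathsf B_{\mu_{R,p},Q}$ is symmetric; one small subtlety is that the measure $\mu_{R,p}$ itself depends on $p$ through the tilt $\ell_\xi$ appearing in \eqref{e.commutator} and in $\partial_x^*$, so the dependence of $v$ and of the coefficients on $p$ is not purely through the boundary condition — the main obstacle is to check that, after the change of variables $\phi\mapsto\phi-\ell_p$ that removes the tilt, the bilinear form $\mathsf B$ picks up only a $p$-independent quadratic structure (the $V''$'s are evaluated at gradients that are shifted by the constant $\nabla\ell_p$, and $\nu_R(Q,p)$ inherits its polynomial-in-$p$ character from this), which is exactly the content of \cite[Lemma 5.2]{AW} and can be imported verbatim; I would cite that reference rather than reproduce the bookkeeping.
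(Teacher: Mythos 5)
The paper does not prove this lemma: it is imported verbatim as~\cite[Lemma~5.2]{AW}, with the text stating only that ``the following lemma is~\cite[Lemma~5.2]{AW}.'' Your reconstruction is a correct sketch of the standard variational argument that such a citation compresses, and your treatment of the first and second variations and the coercivity of $\mathsf B_{\mu_{R,p},Q}$ is exactly right. The one place where your write-up slightly understates the work is the quadratic representation: the assertion that ``$p\mapsto v(\cdot,Q,p)$ is affine'' is not literally true, because the measure $\mu_{R,p}$ (and hence the bilinear form $\mathsf B_{\mu_{R,p},Q}$, whose coefficients $V''(\nabla\phi(e))$ are integrated against $\mu_{R,p}$) depends on $p$ in a way that survives the change of variables $\phi\mapsto\phi-\ell_p$ — that substitution merely trades the $p$-dependent boundary data for a $p$-dependent tilt of the potential. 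Establishing that $\nu_R(Q,\cdot)$ is nonetheless an exact quadratic polynomial in $p$, with a symmetric Hessian, is precisely the nontrivial content of~\cite[Lemma~5.2]{AW}, not a ``small subtlety'' riding on top of linear algebra; you flag this correctly and defer to~\cite{AW}, which is what the paper does for the entire lemma. So the two approaches coincide — both ultimately invoke~\cite{AW} — but your version adds the helpful scaffolding (coercivity, Euler--Lagrange, the quadratic-response identity via killing the cross term) that the paper leaves implicit.
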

 
  As $R\to \infty$, the subadditive quantity $\nu_R$ is proved to converge with an algebraic rate of convergence.  Define,  for some positive definite matrix $\ahom = \ahom (V)$ and $\chom$,    
  \begin{equation}
\label{e.quadrep.homs}
\overline{\nu}(p) = \frac12p\cdot \ahom p -\chom.
\end{equation}

We have 
  \begin{proposition}[Proposition 6.9 of \cite{AW}]
\label{p.convergence.muL}
There exist~$\beta(d, \lambda, \Lambda)\in \left( 0,\frac12\right]$ and~$C(d, \lambda, \Lambda)<\infty$ such that, for every~$L\in\N$ with $L\leq R$, we have
\begin{equation}
\left| \nu_R (Q_L,p) - \overline{\nu}(p) \right| 
\leq
C  L^{-\beta}. 
\end{equation}
\end{proposition}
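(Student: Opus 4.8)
This is \cite[Proposition~6.9]{AW}, and I would prove it by the subadditive approach to quantitative homogenization of \cite{AKM,AKMBook}, adapted to the Helffer--Sj\"ostrand operator. The first step is to reduce the claim to a rate of convergence for the matrices $\ahom(Q_L)$ of \lref{basicprops}: uniform convexity gives $\tfrac\lambda2|p|^2\le\nu_R(Q_L,p)-\nu_R(Q_L,0)=\tfrac12 p\cdot\ahom(Q_L)p\le\tfrac\Lambda2|p|^2$, so it suffices to produce $\beta=\beta(d,\lambda,\Lambda)\in(0,\tfrac12]$ with $\|\ahom(Q_L)-\ahom\|+|\nu_R(Q_L,0)+\chom|\le CL^{-\beta}$, where $\overline\nu(p)=\tfrac12 p\cdot\ahom p-\chom$. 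To obtain a rate rather than mere convergence I would introduce the dual (concave) quantity
\[
\nu^*_R(Q_L,q):=\sup_{w\in H^1(Q_L,\mu_{R})}\frac1{|Q_L|}\Big(-\tfrac12\mathsf B_{\mu_{R},Q_L}[w,w]+\sum_{e\in\mathcal E(Q_L)}\big\langle\,(q\cdot\widehat e)\,\nabla w(e,\cdot)\,\big\rangle_{\mu_{R}}\Big),
\]
with $\widehat e$ the orientation of $e$; it is quadratic in $q$, $\nu^*_R(Q_L,q)=\tfrac12 q\cdot\ahom^*(Q_L)q-\nu_R(Q_L,0)$, and satisfies $\nu_R(Q_L,p)+\nu^*_R(Q_L,q)\ge p\cdot q$ for all $p,q$, with near-saturation at $q\approx\ahom(Q_L)p$. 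The quantity that will be shown to decay geometrically is the scale-$L$ ellipticity gap $J(L):=\|\ahom(Q_L)-\ahom^*(Q_L)^{-1}\|\ge0$; the decay of $\|\ahom(Q_L)-\ahom\|$ itself then follows by telescoping, as explained below.

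Next I would establish the two monotonicity properties. Partitioning $Q_L$ into disjoint translates of $Q_{L'}$ ($L'\mid L$) and pasting the associated $\nu_R(\cdot,p)$-minimizers --- which all coincide with $\ell_p$ on the internal boundaries --- yields an admissible competitor for $\nu_R(Q_L,p)$, hence $\nu_R(Q_L,p)\le\frac1{\#\{i\}}\sum_i\nu_R(Q_{L'}^{(i)},p)+\mathrm{err}(L',L)$, and dually $\nu^*_R(Q_L,q)\ge\frac1{\#\{i\}}\sum_i\nu^*_R(Q_{L'}^{(i)},q)-\mathrm{err}(L',L)$. The defect $\mathrm{err}(L',L)$ has two sources: the buffer layers along the internal boundaries, bounded by $CL'/L$ via uniform ellipticity and a Caccioppoli inequality; and the failure of $\mu_{R,p}$ to factorize over the subcubes, which I would handle by conditioning the field on a shell of width $\sim\log L'$ around each subcube --- making the subcube fields conditionally independent by the domain-Markov property --- and then comparing the conditioned subcube problem with the affine-data one using \pref{BL} and a large-scale (Lipschitz-type) regularity estimate for the Helffer--Sj\"ostrand PDE; this localization also makes every estimate uniform in the outer scale $R\ge L$. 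With the ellipticity bounds, a Fekete-type argument then produces quadratic limits $\overline\nu(p)=\lim_L\nu_R(Q_L,p)$ and $\overline\nu^*(q)=\lim_L\nu^*_R(Q_L,q)$ with $\lambda\Id\le\ahom\le\Lambda\Id$, and convex duality identifies $\overline\nu^*(q)=\tfrac12 q\cdot\ahom^{-1}q+\chom$ with $\overline\nu(p)+\overline\nu^*(q)\ge p\cdot q$, equality iff $q=\ahom p$.

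The heart of the proof is the algebraic rate, carried out as in \cite[Ch.~2]{AKMBook} and \cite[\S6]{AW}. The quadratic-response identity \eqref{e.quadresp.nu} (and its dual) expresses the decrement $\frac1{\#\{i\}}\sum_i\nu_R(Q_{L'}^{(i)},p)-\nu_R(Q_L,p)$ of $\nu_R$ across scales as, up to $\mathrm{err}(L',L)$, half the $\mathsf B$-norm of the difference between the coarse glued minimizer and the true minimizer on $Q_L$, which convexity and duality bound by $CJ(L')$. Running this at adjacent scales then gives a one-step contraction $J(mL)\le(1-c)J(L)+CL^{-\delta}$ for a fixed ratio $m\ge2$ and $c,\delta>0$: the gain $(1-c)$ comes from the fact that the spatial average over the subcubes of the ``slopes'' of the scale-$mL$ dual maximizer equals its global slope, so strict convexity forces a definite drop in the averaged subcube energies unless all local slopes already coincide. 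Iterating gives $J(L)\le CL^{-\beta}$, and telescoping, $\nu_R(Q_L,p)-\overline\nu(p)=\sum_{k\ge0}\big(\nu_R(Q_{m^kL},p)-\nu_R(Q_{m^{k+1}L},p)\big)$, where each term lies between $-\mathrm{err}$ and $J(m^kL)+\mathrm{err}$, so the sum is $\lesssim L^{-\beta}$; the constant term $\nu_R(Q_L,0)+\chom$ is handled identically. This is the asserted estimate.

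The step I expect to be the main obstacle is controlling the non-factorization part of $\mathrm{err}(L',L)$ with an algebraic rate --- that is, replacing the restriction of $\mu_{R,p}$ to a subcube by the finite-volume Gibbs measure on that subcube with affine boundary data, uniformly over the field-dependent but uniformly elliptic coefficients $\a(e,e)=V''(\nabla\phi(e))$. This requires quantitative elliptic regularity --- interior and boundary Caccioppoli estimates, Meyers higher integrability, and a large-scale Lipschitz estimate --- for the Helffer--Sj\"ostrand equation, valid for all such coefficients, together with \pref{BL} to dominate the fluctuations of $\nabla\phi$ near the shells. These are exactly the ingredients developed in \cite{AW} in the sections preceding its Proposition~6.9, and I would invoke them rather than reprove them.
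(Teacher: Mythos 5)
The paper does not prove this proposition; it is imported verbatim as \cite[Proposition~6.9]{AW}, so there is no ``paper's own proof'' to compare against. Your sketch is, in effect, an outline of the argument in \cite{AW}, which in turn adapts the subadditive-quantity framework of \cite{AKM,AKMBook} to the Helffer--Sj\"ostrand operator: reduce to rates for $\ahom(Q_L)$ and $\nu_R(Q_L,0)$, introduce the dual quantity $\nu^*_R$ and the ellipticity gap $J$, establish approximate sub/superadditivity with an error coming from the failure of $\mu_{R,p}$ to factorize over subcubes, contract $J$ geometrically across scales via convex duality, and telescope. That matches the strategy underlying the cited result, and you have correctly identified the genuinely new technical obstacle relative to the deterministic/stationary-random setting --- namely, controlling the non-product structure of the Gibbs measure via conditioning on separating shells, the Brascamp--Lieb inequality, and a large-scale regularity theory for the Helffer--Sj\"ostrand equation, which is precisely what the earlier sections of \cite{AW} supply. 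Since the present paper simply cites the result, invoking \cite{AW} directly (as the paper does) is the intended reading; your expanded sketch is consistent with it.
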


Combine with the quadratic representation \eqref{e.quadrep.homs}, this implies 
\begin{corollary}
\label{c.coeff}
There exist $\beta(d,\lambda,\Lambda)\in \left(0,\tfrac12\right]$ and $C(d,\lambda,\Lambda)<\infty$ such that, for every $R\in\N$,
\begin{equation}
\label{e.coeff}
\left|\ahom(Q_R)  - \ahom \right| 
\leq CR^{-\beta} 
\end{equation}
\end{corollary}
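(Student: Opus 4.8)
The plan is to simply combine the quadratic representation of $\nu_R(Q_L,p)$ from Lemma \ref{l.basicprops} with the quantitative convergence of $\nu_R(Q_L,p)$ to the homogenized quantity $\overline\nu(p)$ provided by Proposition \ref{p.convergence.muL}, and then extract information about the quadratic coefficient by a polarization/finite-difference argument. Concretely, fix $R\in\N$ and apply Proposition \ref{p.convergence.muL} with $L=R$, so that for every $p\in\Rd$ one has $|\nu_R(Q_R,p)-\overline\nu(p)|\leq CR^{-\beta}$. By \eqref{e.quadrep} we may write $\nu_R(Q_R,p)=\tfrac12 p\cdot\ahom(Q_R)p+\nu_R(Q_R,0)$, and by \eqref{e.quadrep.homs} we have $\overline\nu(p)=\tfrac12 p\cdot\ahom p-\chom$. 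Subtracting and using the triangle inequality with the bound at $p$ and at $p=0$ yields
\begin{equation*}
\Ll| \tfrac12 p\cdot\big(\ahom(Q_R)-\ahom\big)p\Rr| \leq \Ll|\nu_R(Q_R,p)-\overline\nu(p)\Rr| + \Ll|\nu_R(Q_R,0)-\overline\nu(0)\Rr| \leq 2CR^{-\beta}
\end{equation*}
for every $p\in\Rd$.

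The remaining step is to pass from control of the quadratic form $p\mapsto p\cdot(\ahom(Q_R)-\ahom)p$ to control of the matrix norm of the symmetric matrix $M:=\ahom(Q_R)-\ahom$ (it is symmetric since both $\ahom(Q_R)$ and $\ahom$ are symmetric by Lemma \ref{l.basicprops} and the definition in \eqref{e.quadrep.homs}). For a symmetric matrix this is immediate: $|M|=\sup_{|p|=1}|p\cdot Mp|$, so applying the displayed bound with unit vectors $p$ gives $|M|\leq 4CR^{-\beta}$. Relabelling the constant $C$ (absorbing the factor $4$, which depends only on $d$) gives \eqref{e.coeff} with the same exponent $\beta=\beta(d,\lambda,\Lambda)$ and a constant $C=C(d,\lambda,\Lambda)$.

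There is essentially no obstacle here; this corollary is purely a bookkeeping consequence of Proposition \ref{p.convergence.muL}. The only minor points to be careful about are (a) that the additive constants $\nu_R(Q_R,0)$ and $-\chom$ cancel correctly when one takes the difference of the $p$-bound and the $p=0$ bound, and (b) that one uses symmetry of the matrices so that the quadratic form determines the matrix. Both of these are standard, so the proof is a few lines.
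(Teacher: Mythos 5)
Your proof is correct and is exactly the argument the paper is implicitly invoking: the paper simply says that Proposition~\ref{p.convergence.muL} combined with the quadratic representations \eqref{e.quadrep} and \eqref{e.quadrep.homs} "implies" the corollary, and your subtraction at $p$ and $p=0$ followed by the identity $|M|=\sup_{|p|=1}|p\cdot Mp|$ for symmetric $M$ is the standard way to fill in that one-line implication. The bookkeeping is correct (the constants $\nu_R(Q_R,0)$ and $-\chom$ cancel as you say, and symmetry of both matrices is asserted in Lemma~\ref{l.basicprops} and the definition of $\ahom$).
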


\subsection{Estimates on finite-volume correctors}

A direct consequence of Proposition \ref{p.convergence.muL} and the quadratic response \eqref{e.quadresp.nu} implies the quantitative convergence of the solution to the Dirichlet problem \eqref{e.BVP.v} to the affine function.

\begin{lemma}
\label{l.convergencesol}
There exist $\beta(d,\lambda,\Lambda)\in \left(0,\tfrac12\right]$ and $C(d,\lambda,\Lambda)<\infty$ such that, for every $R\in\N$,
\begin{equation}
\frac 1R \left\| v(\cdot, Q_R,  p) - \ell_p \right\|_{\underline{L}^2(Q_R,\mu_R)}
\leq C R^{-\beta} .
\end{equation}
\end{lemma}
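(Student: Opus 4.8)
The plan is to extract the convergence of $v(\cdot,Q_R,p)$ to $\ell_p$ directly from the quadratic response identity \eqref{e.quadresp.nu} together with the convergence of the energy quantities in Proposition \ref{p.convergence.muL}. The starting point is the observation that $\ell_p$ is an admissible competitor in the optimization problem defining $\nu_R(Q_R,p)$, since $\ell_p \in \ell_p + H^1_0(Q_R,\mu_R)$ trivially (the difference is $0$). Applying \eqref{e.quadresp.nu} with $Q = Q_R$ and $w = \ell_p$ gives
\begin{equation*}
\mathsf{E}_{\mu_{R,p},Q_R}[\ell_p] - |Q_R|\,\nu_R(Q_R,p)
=
\frac12 \mathsf{B}_{\mu_{R,p},Q_R}\bigl[v(\cdot,Q_R,p) - \ell_p,\, v(\cdot,Q_R,p) - \ell_p\bigr].
\end{equation*}
Since $\nabla \ell_p \equiv p$ is deterministic and $\partial_y \ell_p = 0$ for all $y$, the left side simplifies: the first term of $\mathsf{B}$ vanishes and $\mathsf{E}_{\mu_{R,p},Q_R}[\ell_p] = \frac12 |Q_R|\, p \cdot \langle \mathsf V''\rangle_{\mu_{R,p}}\, p$ (roughly speaking — the precise constant is the averaged Hessian term from \eqref{e.defBU}). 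Thus the left side equals $|Q_R|$ times the gap between the energy of the affine competitor and the true minimal energy, and this gap is controlled by $|\nu_R(Q_R,p) - \overline\nu(p)|$ plus the analogous bound for the energy of $\ell_p$ — or, more cleanly, one writes the left side as $|Q_R|(\nu_R(Q_R,0) + \tfrac12 p\cdot A p - \nu_R(Q_R,p))$ using the quadratic representation \eqref{e.quadrep}, which is $O(|Q_R| R^{-\beta})$ by Proposition \ref{p.convergence.muL} applied with $L = R$ (and using \eqref{e.quadrep.homs}).

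The next step is to pass from the bilinear form $\mathsf{B}_{\mu_{R,p},Q_R}[v - \ell_p, v-\ell_p]$ to the $\underline{L}^2$ norm of $v - \ell_p$. Here the key is coercivity: by the uniform convexity of $V$ (condition (ii), giving $\mathsf V'' \geq \lambda$) and positivity of the $\partial_y$ terms, $\mathsf{B}_{\mu_{R,p},Q_R}[w,w] \geq \frac{\lambda}{|Q_R|} \sum_{e}\langle (\nabla w(e))^2\rangle_{\mu} = \lambda \|\nabla w\|_{\underline{L}^2(Q_R,\mu_R)}^2$ for $w = v - \ell_p \in H^1_0(Q_R,\mu_R)$. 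Then a discrete Poincaré inequality on $Q_R$ (for functions vanishing on $\partial Q_R$), applied $\mu$-realization-wise, converts the gradient $\underline{L}^2$ bound into $\|w\|_{\underline{L}^2(Q_R,\mu_R)}^2 \leq C R^2 \|\nabla w\|_{\underline{L}^2(Q_R,\mu_R)}^2$. Combining the three displays: $\frac{1}{R^2}\|v - \ell_p\|_{\underline{L}^2(Q_R,\mu_R)}^2 \leq C \|\nabla(v-\ell_p)\|_{\underline{L}^2(Q_R,\mu_R)}^2 \leq \frac{C}{\lambda} \mathsf{B}[v-\ell_p,v-\ell_p] \leq C R^{-\beta}$, which after taking square roots is exactly the claimed bound $\frac1R\|v(\cdot,Q_R,p) - \ell_p\|_{\underline{L}^2(Q_R,\mu_R)} \leq C R^{-\beta/2}$; replacing $\beta$ by $\beta/2$ (still in $(0,\tfrac12]$) gives the stated form.

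The main obstacle, and the step requiring the most care, is the reduction of the left-hand side of the quadratic response identity to a quantity controlled by Proposition \ref{p.convergence.muL}. One must be careful that the boundary condition in the optimization problem is affine with slope $p$, so the relevant measure is $\mu_{R,p}$ rather than $\mu_R$, and the Hessian terms $\mathsf V''(\nabla\phi(e))$ are evaluated at the tilted field; however, the Brascamp-Lieb bound and the energy convergence of \cite{AW} are all stated uniformly in $p$, so this is a bookkeeping matter rather than a genuine difficulty. A second minor point is justifying that the discrete Poincaré constant on $Q_R$ scales like $R^2$ independently of the random environment — this is purely deterministic, a standard fact about the discrete Laplacian on a box with Dirichlet data, applied separately for each fixed $\phi$. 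Once these are in hand the lemma follows; I expect the entire argument to be short, essentially a corollary of the machinery already assembled, which is why the paper states it as "a direct consequence."
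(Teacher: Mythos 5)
Your plan has the right skeleton (quadratic response, coercivity of $\mathsf{B}$, Poincar\'e) but the central step fails: $\mathsf{B}_{\mu_{R,p},Q_R}[v-\ell_p,v-\ell_p]$ is \emph{not} $O(R^{-\beta})$. Taking $w=\ell_p$ in \eqref{e.quadresp.nu} identifies this quantity with $2\bigl(\mathsf{E}_{\mu_{R,p},Q_R}[\ell_p]-\nu_R(Q_R,p)\bigr)$, and that difference converges to a strictly positive constant, not to zero. Indeed $\mathsf{E}[\ell_p]=\frac12\mathsf{B}[\ell_p,\ell_p]$ is essentially $\frac12\,p\cdot\langle V''\rangle\,p$, the arithmetic (Voigt) average of the coefficient, whereas $\nu_R(Q_R,p)\to\frac12\,p\cdot\ahom\,p-\chom$ involves the effective matrix $\ahom$; for non-constant $V''$ the Voigt inequality is strict and the gap is $\Omega(1)$. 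Equivalently, the corrector $v-\ell_p$ has $O(1)$ gradient in $\underline{L}^2$: the lemma asserts \emph{sublinearity} of the corrector, not smallness of its gradient. Your chain $\frac1{R^2}\|v-\ell_p\|^2\lesssim\|\nabla(v-\ell_p)\|^2\lesssim\mathsf{B}[v-\ell_p,v-\ell_p]$ therefore delivers only the trivial $\frac1R\|v-\ell_p\|_{\underline{L}^2(Q_R,\mu_R)}=O(1)$. The parenthetical rewriting of the left side as $\nu_R(Q_R,0)+\frac12 p\cdot A\,p-\nu_R(Q_R,p)$ is unjustified: \eqref{e.quadrep} evaluates $\nu_R$, the \emph{minimal} energy, and says nothing about the value of $\mathsf{E}$ at the non-optimal competitor $\ell_p$.

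The fix keeps your strategy but changes the competitor from $\ell_p$ to a patchwork of \emph{local} solutions. Fix a mesoscale $3^m\ll R$, cover $Q_R$ by translates $z+\cu_m$, and set $v_{loc}=v(\cdot,z+\cu_m,p)$ on $z+\cu_m$; since each local solution equals $\ell_p$ on $\partial(z+\cu_m)$, $v_{loc}$ lies in $\ell_p+H^1_0(Q_R,\mu_{R,p})$ and is admissible. Then quadratic response, coercivity and Poincar\'e on the full box give $\frac1R\|v-v_{loc}\|_{\underline{L}^2}\lesssim\|\nabla(v-v_{loc})\|_{\underline{L}^2}\lesssim\bigl(\nu_R(\cu_m,p)-\nu_R(Q_R,p)\bigr)^{1/2}\lesssim 3^{-m\beta/2}$ by Proposition~\ref{p.convergence.muL}; this is exactly the comparison the paper runs in the proofs of Lemmas~\ref{l.varest} and~\ref{l.convergenceflux}. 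On the other hand $v_{loc}-\ell_p$ vanishes on $\partial(z+\cu_m)$, so Poincar\'e on each subcube gives $\|v_{loc}-\ell_p\|_{\underline{L}^2(z+\cu_m)}\lesssim 3^m\|\nabla(v_{loc}-\ell_p)\|_{\underline{L}^2}\lesssim 3^m$, hence $\frac1R\|v_{loc}-\ell_p\|_{\underline{L}^2(Q_R)}\lesssim 3^m/R$. Summing the two contributions and choosing $m$ to balance $3^m/R$ against $3^{-m\beta/2}$ yields $\frac1R\|v-\ell_p\|_{\underline{L}^2}\lesssim R^{-\beta'}$. Using $\ell_p$ directly as the competitor skips the subdivision into mesoscale cubes, which is precisely where the $R^{-\beta}$ decay comes from.
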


An application of the multiscale Poincar\'e inequality (Proposition \ref{p.MP}) implies the quantitative convergence of the fluxes along the geometric scales $R= 3^n$.  We define for every $m\in\N$, 
\begin{equation}
\label{e.triad}
\cu_m:= \left[ -3^m, 3^m \right]^d\cap \Zd.  
\end{equation}
We also define,  for $m\le n$,  $\mathcal{Z}_{m}:= 3^m\Zd \cap \cu_n$, so that $\{ y+ \cu_m: y \in \mathcal Z_m\}$ is a partition of $\cu_n$. 
The next lemma shows the spatial average of the flux is concentrated around its mean. 

\begin{lemma}
\label{l.varest}
There exist $\beta(d,\lambda,\Lambda)\in \left(0,\tfrac12\right]$ and $C(d,\lambda,\Lambda)<\infty$ such that, for every $m\in\N$,
\begin{equation}
\var_{\mu_{\cu_m}} \left[ \left(\a \nabla v(\cdot, \cu_m, p)\right) _{\cu _{m}} \right]
\leq
 C 3^{-m\beta }. 
 \end{equation}
\end{lemma}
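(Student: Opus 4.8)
The plan is to bound the variance of the spatial average of the flux $\a \nabla v(\cdot, \cu_m, p)$ by splitting the contribution into two pieces: a ``deterministic'' part coming from the convergence of the energy quantities $\nu_R$, and a genuinely stochastic part that we control by the Brascamp--Lieb inequality applied to the Helffer--Sj\"ostrand structure. Concretely, writing $F := \left(\a \nabla v(\cdot, \cu_m, p)\right)_{\cu_m}$ (a vector-valued functional of $\phi$), I would estimate $\var_{\mu_{\cu_m}}[F]$ using \eqref{e.BL.var}, i.e.
\begin{equation*}
\var_{\mu_{\cu_m}}[F] \leq \frac1\lambda \sum_{x,y\in \cu_m^\circ} G_{\cu_m}(x,y) \left\langle (\partial_x F)(\partial_y F) \right\rangle_{\mu_{\cu_m}}.
\end{equation*}
So the task reduces to controlling the derivatives $\partial_x F$, which by the product/chain rule split into a term where $\partial_x$ hits the coefficient $\a(e,e) = \mathsf V''(\nabla\phi(e))$ (producing $\mathsf V'''$, bounded by the Lipschitz constant $L$ from assumption (iii)) and a term where $\partial_x$ hits $\nabla v(\cdot,\cu_m,p)$, i.e. $\nabla \partial_x v$. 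The function $\partial_x v$ solves a linearized Helffer--Sj\"ostrand equation with a right-hand side supported near the edges containing $x$, so one expects $\|\nabla \partial_x v\|$ to have summable-in-$x$ $L^2(\mu)$ estimates, with total mass controlled by the energy $\mathsf{B}_{\mu_{\cu_m},\cu_m}[v,v] \lesssim |p|^2 |\cu_m|$ coming from \eqref{e.quadrep} and \eqref{e.coeff}.

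The key steps, in order, are as follows. First, I record that $F$ is the gradient in $p$ of (half) the energy density, so by the first-variation identity \eqref{e.firstvar.nu} and the quadratic representation \eqref{e.ahomQ}, $\langle F\rangle_{\mu_{\cu_m}} = \ahom(\cu_m)p$ and hence, by Corollary~\ref{c.coeff}, $|\langle F\rangle_{\mu_{\cu_m}} - \ahom p| \leq C 3^{-m\beta}|p|$; this is not literally needed for the variance bound but fixes the normalization. Second, I differentiate the defining equation \eqref{e.BVP.v} in the direction $\omega_x$ to obtain the equation solved by $\partial_x v$, then test it against $\partial_x v$ itself and use uniform ellipticity (assumption (ii)) together with the commutator identity \eqref{e.commutator} to get an energy estimate of the form $\|\partial_x v\|_{H^1(\mu_{\cu_m})}^2 \lesssim (\text{local data near } x)$, and then sum over $x$ using that $\sum_x \mathsf{B}_{\mu_{\cu_m},\cu_m}[v,v]$-type quantities are $O(|\cu_m||p|^2)$. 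Third, I insert these bounds into the Brascamp--Lieb right-hand side; because $F$ is a volume average, $\partial_x F$ carries a prefactor $|\cu_m|^{-1}$, and the Green function sum $\sum_{x,y} G_{\cu_m}(x,y)$ is $O(|\cu_m|\cdot 3^{2m})$ in $d=2$, so naively one only gets an $O(1)$ bound --- this is where the averaging must be exploited more carefully, via the multiscale Poincar\'e inequality (Proposition~\ref{p.MP}) applied to the flux field, exactly as in the proof of Lemma~\ref{l.convergencesol}: one writes the variance of the average in terms of variances of averages over the sub-triads $\{y+\cu_k : y\in\mathcal Z_k\}$ across scales $k\le m$, and uses subadditivity/convergence of $\nu_R$ to gain the decay $3^{-m\beta}$.

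The main obstacle, I expect, is precisely this last point: obtaining the algebraic gain $3^{-m\beta}$ rather than a bound of order $1$. The Brascamp--Lieb inequality alone is lossy by a factor of the volume for a spatial average in two dimensions (the Green function is only logarithmically better than $3^{2m}$), so the decay cannot come from Brascamp--Lieb in isolation. The resolution, following \cite{AW} and \cite{AKMBook}, is to combine Brascamp--Lieb (which gives concentration at each fixed scale) with the quantitative convergence of the energy $\nu_R(\cu_k,p) \to \bar\nu(p)$ from Proposition~\ref{p.convergence.muL}: the quadratic response \eqref{e.quadresp.nu} converts the $O(3^{-k\beta})$ closeness of energies into $L^2$-closeness of the minimizers $v(\cdot,\cu_k,p)$ restricted to subcubes versus $v(\cdot,\cu_m,p)$, and then a Caccioppoli/interior-regularity argument upgrades this to closeness of the fluxes $\a\nabla v$; averaging the near-additivity of the fluxes over $\mathcal Z_k$ and telescoping over $k$ yields the claimed variance bound. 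I would therefore structure the proof to first establish the flux near-additivity and its quantitative version as a lemma (essentially bundled into Proposition~\ref{p.convergence.muL}), and only then run the short Brascamp--Lieb plus multiscale Poincar\'e argument to conclude.
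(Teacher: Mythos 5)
You correctly identify the central obstacle: applying Brascamp--Lieb directly to $F=(\a\nabla v(\cdot,\cu_m,p))_{\cu_m}$ is lossy in $d=2$, and some combination of the energy-convergence estimates with Brascamp--Lieb is needed. You also come close to the key replacement idea when you speak of ``$L^2$-closeness of the minimizers $v(\cdot,\cu_k,p)$ restricted to subcubes versus $v(\cdot,\cu_m,p)$.'' But the final assembly you propose does not close the argument, for two reasons.

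First, you still apply Brascamp--Lieb to the \emph{global} flux and then try to recover the decay through a ``multiscale Poincar\'e plus telescoping'' step. The multiscale Poincar\'e inequality (Proposition~\ref{p.MP}) controls $H^{-1}$ norms, not variances, and in the paper's architecture it runs the other way: Lemma~\ref{l.varest} is an \emph{input} to the flux-convergence estimate of Lemma~\ref{l.convergenceflux} (see the term \eqref{e.c}), which is the result that uses multiscale Poincar\'e. Routing the proof of Lemma~\ref{l.varest} through that machinery would be circular. Second, the ``Caccioppoli/interior-regularity upgrade'' is superfluous: the second variation \eqref{e.quadresp.nu} already gives $L^2$-closeness of the \emph{fluxes}, not just of the solutions.

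What is missing is the localization trick, carried out explicitly: define $v_{\mathrm{loc}}$ by gluing the minimizers $v(\cdot, z+\cu_{m/3},p)$ on the subcubes $z+\cu_{m/3}$, $z\in\mathcal Z_{m/3}$. The quantitative energy convergence (Proposition~\ref{p.convergence.muL}) together with the second variation bounds $\|\a\nabla v - \a\nabla v_{\mathrm{loc}}\|_{\underline L^2(\cu_m,\mu_{\cu_m})}$, and hence the difference of variances, by $C3^{-m\beta/3}$. One then applies Brascamp--Lieb to $(\a\nabla v_{\mathrm{loc}})_{\cu_m}$, and \emph{this} is where the decay actually comes from: because $\partial_x v_{\mathrm{loc}}$ is supported in the single subcube containing $x$, the volume normalization makes $\|\partial_x(\a\nabla v_{\mathrm{loc}})_{\cu_m}\|_{L^2(\mu_{\cu_m})}\lesssim 3^{-4m/3}$. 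Finally, one must observe that $\partial_x\a$ is a discrete divergence of a bounded field, so the Brascamp--Lieb kernel can be integrated by parts to use $|\nabla_x\nabla_y G_{\cu_m}(x,y)|\lesssim \max\{1,|x-y|\}^{-2}$ rather than $G$ itself; summing $|x-y|^{-2}$ over $\cu_m$ costs only $m\,3^{2m}$. Putting these together gives $\var[(\a\nabla v_{\mathrm{loc}})_{\cu_m}]\lesssim m\,3^{-2m/3}$, and the lemma follows. Without the localization of $v$ and the divergence-structure of $\partial_x\a$, neither the $3^{-4m/3}$ nor the $|x-y|^{-2}$ kernel is available, and the estimate stalls at $O(1)$ as you anticipated.
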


\begin{proof}
We define a localized solution $v_{loc}$,  such that if $x\in \cu_m$ is contained in $z+ \cu_{m/3}$ for some $z\in \mathcal Z_{m/3}$,  set $v_{loc}(x) = v(x, z+ \cu_{m/3}, p)$.
We notice that 
\begin{equation*}
\var_{\mu_{\cu_m}} \left[ \left(\a \nabla v(\cdot, \cu_m, p)\right) _{\cu _{m}} \right]
\leq
 \var_{\mu_{\cu_m}} \left[ \left(\a \nabla v_{loc}\right) _{\cu _{m}} \right]+ C 3^{-m\beta/3 }.
 \end{equation*}
 Indeed, it follows from the second variation and triangle inequality that 
 \begin{multline*}
 \left| \var_{\mu_{\cu_m}} \left[ \left(\a \nabla v(\cdot, \cu_m, p)\right) _{\cu _{m}} \right] - \var_{\mu_{\cu_m}} \left[ \left(\a \nabla v_{loc}\right) _{\cu _{m}} \right]\right| \\
 \leq
2 \left\| \a \nabla v(\cdot, \cu_m, p) - \a \nabla v_{loc} \right\|_{\underline L^2(\cu_m, \mu_{\cu_m})}
 \leq
 C(\nu_{3^m}(\cu_{m/3},p) - \nu_{3^m}(\cu_{m},p)),
  \end{multline*}
  and this is bounded by $3^{-m\beta/3}$ by the quantitative convergence of energy (Proposition \ref{p.convergence.muL}).
  
  Therefore it suffices to estimate $\var_{\mu_{\cu_m}} \left[ \left(\a \nabla v_{loc} \right) _{\cu _{m}} \right]$, which we do by using the spectral gap (namely, the Brascamp-Lieb inequality) and that $v_{loc}$ is a localized solution,  to derive a correlation decay for $\a \nabla v_{loc}$.  For simplicity, denote by $\bar {\a\nabla v_{loc}}:=  \left(\a \nabla v_{loc}\right) _{\cu _{m}}$.  Applying the Brascamp-Lieb inequality (Proposition \ref{p.BL}) then yields
  \begin{equation}
  \label{e.BLcum}
 \var_{\mu_{\cu_m}} \left[ \left(\a \nabla v_{loc}\right) _{\cu _{m}} \right]
\leq 
\frac1\lambda 
\sum_{x,y \in \cu_m^\circ}G_{\cu_m}(x,y) \left\langle \left( \partial
_{x}\bar {\a\nabla v_{loc}}\right) \left( \partial _{y}\bar {\a\nabla v_{loc}}\right) \right\rangle_{\mu_{\cu _{m}}}
 \end{equation}
 Notice that by definition \eqref{e.partial}
 \begin{equation*}
  \partial
_{x}\bar {\a\nabla v_{loc}} = \lim_{h \to 0} \frac{\sum_{e\ni x} \a_e (\phi+ h\omega_x) - \a_e(\phi) }{h} \frac{\bar {\a\nabla v_{loc}}(\phi+ h\omega_x) - \bar {\a\nabla v_{loc}}(\phi)}{\sum_{e\ni x} \a_e (\phi+ h\omega_x) - \a_e(\phi) }
\end{equation*}

It follows from the regularity assuption that $\a_e = V''$ is uniformly Lipshitz, and since $\a_e$ is a function of $\nabla \phi$,  we may write $\frac{\sum_{e\ni x} \a_e (\phi+ h\omega_x) - \a_e(\phi) }{h} = \nabla^*\cdot  \mathsf b^h$,  where $\mathsf b^h$ is bounded by a constant independent of $h$.  We claim that there exists $C(d,\lambda,\Lambda)<\infty$, such that 
\begin{equation}
\label{e.barav}
\left\| \frac{\bar {\a\nabla v_{loc}}(\phi+ h\omega_x) - \bar {\a\nabla v_{loc}}(\phi)}{\sum_{e\ni x} \a_e (\phi+ h\omega_x) - \a_e(\phi) } \right\|_{L^2 (\mu_{\cu_m})}
\leq
C3^{-4m/3}.
\end{equation}
This implies, by \eqref{e.BLcum},  and the estimate that $|\nabla_x \nabla_y \cdot G_{\cu_m}(x,y)| \leq  \frac{C}{\max\{1, |x-y|^2\}}$, that
\begin{multline*}
 \var_{\mu_{\cu_m}} \left[ \left(\a \nabla v_{loc}\right) _{\cu _{m}} \right] \\
\leq 
\frac1\lambda 
\sum_{x,y \in \cu_m^\circ}  \left|\mathsf b(x)  \nabla_x^ \cdot G_{\cu_m}(x,y)  \nabla_y^* \cdot  \mathsf b(y) \right|
\left\| \partial
_{x}\bar {\a\nabla v_{loc}} \right\|_{L^2 (\mu_{\cu_m})}
\left\| \partial
_{y}\bar {\a\nabla v_{loc}} \right\|_{L^2 (\mu_{\cu_m})} \\
\leq 
C3^{-8m/3} \sum_{x,y \in \cu_m^\circ}  \left|\mathsf b(x)  \nabla_x^* \cdot G_{\cu_m}(x,y)  \nabla_y  \mathsf b(y) \right|\\
\leq 
C3^{-8m/3} \sum_{x,y \in \cu_m^\circ} \frac{C}{\max\{1, |x-y|^2\}}
\leq 
Cm 3^{-2m/3} 
\end{multline*}
Thus we conclude the lemma.  
To prove \eqref{e.barav}, notice that 
 \begin{multline*}
\left\| \frac{\bar {\a\nabla v_{loc}}(\phi+ h\omega_x) - \bar {\a\nabla v_{loc}}(\phi)}{\sum_{e\ni x} \a_e (\phi+ h\omega_x) - \a_e(\phi) } \right\|_{L^2 (\mu_{\cu_m})}
\leq 
2 \left\| \frac{(\a(\phi+ h\omega_x) (\nabla v_{loc}(\phi+ h\omega_x) -\nabla v_{loc}(\phi) ) )_{\cu_m} }{\sum_{e\ni x} \a_e (\phi+ h\omega_x) - \a_e(\phi) } \right\|_{L^2 (\mu_{\cu_m})} \\
+
2  \left\| \frac{((\a(\phi+ h\omega_x)  -\a(\phi)) \nabla v_{loc}(\phi) ) )_{\cu_m} }{\sum_{e\ni x} \a_e (\phi+ h\omega_x) - \a_e(\phi) } \right\|_{L^2 (\mu_{\cu_m})}
\end{multline*}
And since 
 \begin{equation}
 \label{e.1}
 ((\a(\phi+ h\omega_x)  -\a(\phi)) \nabla v_{loc}(\phi) ) )_{\cu_m} 
 = 3^{-2m} \sum_{e\ni x} (\a_e (\phi+ h\omega_x) - \a_e(\phi)) \nabla v_{loc} (e) 
 \end{equation}
 Therefore 
  \begin{equation*}
   \left\| \frac{((\a(\phi+ h\omega_x)  -\a(\phi)) \nabla v_{loc}(\phi) ) )_{\cu_m} }{\sum_{e\ni x} \a_e (\phi+ h\omega_x) - \a_e(\phi) } \right\|_{L^2 (\mu_{\cu_m})}
   \leq
   C3^{-2m} \sum_{e\ni x} \| \nabla v_{loc} (e) \|_{L^2(\mu_{\cu_m})}
    \end{equation*}
    where the right side is bounded by 
     \begin{equation*}
     C3^{-2m}\sum_{e \in \mathcal E(\cu_{m/3})} \| \nabla v_{loc} (e) \|_{L^2(\mu_{\cu_m})}
     \leq 
     C3^{-2m}3^{2m/3} \nu_{3^m} (\cu_{m/3},p)
     \leq
     C3^{-4m/3}
      \end{equation*}
      
      To estimate the other term, we claim that for the unique $z\in\mathcal Z_{m/3}$ such that $x\in z+\cu_{m/3}$, 
        \begin{equation}
        \label{e.locav}
        \left\| \frac{\nabla v_{loc}(\phi+ h\omega_x) -\nabla v_{loc} }{\sum_{e\ni x} \a_e (\phi+ h\omega_x) - \a_e(\phi) } \right\|_{ L^2 (z+\cu_{m/3}, \mu_{\cu_m})}
        \leq 
        C3^{2m/3}
          \end{equation}
          
        Indeed,  since $\tilde v_{loc}:=  v_{loc}(\phi+ h\omega_x)$ is a solution to the Dirichlet problem:
      \begin{equation}
\left\{ 
\begin{aligned}
& \left( -\L_{\mu} + \nabla^* \a(\phi+ h\omega_x) \nabla \right)  \tilde v_{loc}
 =0
& \mbox{in} & \ z+\cu_{m/3} \times \Omega_0( z+\cu_{m/3}), 
\\ & 
 \tilde v_{loc} - \ell_p = 0& \mbox{on} & \ \partial  z+\cu_{m/3} \times\Omega_0( z+\cu_{m/3}),
\end{aligned}
\right.
\end{equation}
Testing the equation of $\tilde v_{loc}$ and $ v_{loc}$ and subtract them, we obtain 
\begin{multline*}
\left\langle \sum_{e \in \mathcal E (z+ \cu_{m/3})} \a_e (\phi) \nabla v_{loc}(e) \nabla ( v_{loc}(e)- \tilde v_{loc}(e) ) \right\rangle_{\mu_{\cu_m}} \\
\leq \left\langle \sum_{e \in \mathcal E (z+ \cu_{m/3})} \a_e (\phi+ h\omega_x) \nabla \tilde v_{loc}(e) \nabla ( v_{loc}(e)- \tilde v_{loc}(e) ) \right\rangle_{\mu_{\cu_m}}
 \end{multline*}
 Therefore 
   \begin{equation*}
     \left\|  \nabla \tilde v_{loc} - \nabla v_{loc} \right\| _{L^2 (z+\cu_{m/3}, \mu_{\cu_m})}^2
     \leq 
     \left\langle \sum_{e\ni x} | \a_e (\phi+ h\omega_x) - \a_e (\phi) | \nabla v_{loc}(e) \nabla ( v_{loc}(e)- \tilde v_{loc}(e) ) \right\rangle _{\mu_{\cu_m}}
      \end{equation*}
      thus by Cauchy-Schwarz
       \begin{equation*}
        \left\| \frac{\nabla \tilde v_{loc} -\nabla v_{loc} }{\sum_{e\ni x} \a_e (\phi+ h\omega_x) - \a_e(\phi) } \right\|_{ L^2 (z+\cu_{m/3}, \mu_{\cu_m})}
        \leq
        \sum_{e\ni x} \| \nabla v_{loc} (e) \|_{L^2(\mu_{\cu_m})}
        \leq 
        C3^{2m/3} \nu_{3^m} (\cu_{m/3},p)
       \end{equation*}
       this is \eqref{e.locav}.  Therefore 
       \begin{multline}
       \label{e.2}
       \left\| \frac{(\a(\phi+ h\omega_x) (\nabla v_{loc}(\phi+ h\omega_x) -\nabla v_{loc}(\phi) ) )_{\cu_m} }{\sum_{e\ni x} \a_e (\phi+ h\omega_x) - \a_e(\phi) } \right\|_{L^2 (\mu_{\cu_m})} \\
       \leq 
       C 3^{-2m}   \left\| \frac{\nabla v_{loc}(\phi+ h\omega_x) -\nabla v_{loc} }{\sum_{e\ni x} \a_e (\phi+ h\omega_x) - \a_e(\phi) } \right\|_{ L^2 (z+\cu_{m/3}, \mu_{\cu_m})}
       \leq
       C3^{-4m/3}
        \end{multline}
        Combine the estimates \eqref{e.1} and \eqref{e.2} above we conclude \eqref{e.barav}, and thus finish the proof of the lemma.
\end{proof}

We are ready to prove the convergence of the fluxes with an algebraic rate. 
\begin{lemma}
\label{l.convergenceflux}
There exist $\beta(d,\lambda,\Lambda)\in \left(0,\tfrac12\right]$ and $C(d,\lambda,\Lambda)<\infty$ such that, for every $n\in\N$,
\begin{equation}
3^{-2n}  \left\| \a \nabla v(\cdot, \cu_n,p) - \ahom p \right\|_{\underline{H}^{-1}(\cu_n,\mu_{\cu_n})}
\leq C 3^{-n\beta }. 
\end{equation}
\end{lemma}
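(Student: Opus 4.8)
\textbf{Proof proposal for Lemma \ref{l.convergenceflux}.}

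The plan is to combine the $H^{-1}$ smallness of the flux fluctuations with the $L^2$ smallness of the flux mean minus $\ahom p$, using the multiscale Poincar\'e inequality to convert the spatial concentration estimate of Lemma \ref{l.varest} into an $\underline H^{-1}$ bound. Write $\mathbf{g} := \a\nabla v(\cdot,\cu_n,p) - \ahom p$ and split it as $\mathbf{g} = \big(\mathbf{g} - (\mathbf{g})_{\cu_n}\big) + (\mathbf{g})_{\cu_n}$. The constant part $(\mathbf{g})_{\cu_n} = (\a\nabla v(\cdot,\cu_n,p))_{\cu_n} - \ahom p$ is a spatially constant (but random) vector, so its $\underline H^{-1}$ norm is comparable to $3^n$ times its $L^2(\mu_{\cu_n})$ norm; hence the first reduction is to show
\begin{equation*}
\left\| (\a\nabla v(\cdot,\cu_n,p))_{\cu_n} - \ahom p \right\|_{L^2(\mu_{\cu_n})} \leq C 3^{-n\beta}.
\end{equation*}
By the triangle inequality this in turn splits into the fluctuation bound $\var_{\mu_{\cu_n}}[(\a\nabla v(\cdot,\cu_n,p))_{\cu_n}]^{1/2} \leq C3^{-n\beta}$, which is exactly Lemma \ref{l.varest}, and the bound on the deterministic mean $\big|\langle (\a\nabla v(\cdot,\cu_n,p))_{\cu_n}\rangle_{\mu_{\cu_n}} - \ahom p\big| \leq C3^{-n\beta}$. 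The latter follows from the quadratic representation \eqref{e.ahomQ}: testing the first variation with $\ell_{p'}$ identifies $\langle (\a\nabla v(\cdot,\cu_n,p))_{\cu_n}\rangle_{\mu_{\cu_n}}\cdot p'$ (up to the $\L_\mu$ contribution, which integrates against an affine function to zero) with $p'\cdot\ahom(\cu_n) p$, and then Corollary \ref{c.coeff} gives $|\ahom(\cu_n) - \ahom| \leq C3^{-n\beta}$.

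For the mean-zero part $\mathbf{g} - (\mathbf{g})_{\cu_n}$ I would apply the multiscale Poincar\'e inequality (Proposition \ref{p.MP}), which bounds the $\underline H^{-1}(\cu_n)$ norm of a mean-zero field by $C$ times $3^n$ times a sum over dyadic (triadic) scales $3^m$, $0\le m\le n$, of $3^{-n}$ times the $\ell^2$-average over the subcubes $z + \cu_m$ of the $L^2(\mu)$-norm of the local spatial average of $\mathbf{g}$ on $z+\cu_m$, plus a $\nabla\mathbf{g}$ term (which here is controlled by the Lipschitz/uniform bounds on $\a$ together with the Caccioppoli-type energy bound on $\nabla v$). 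The key input at scale $3^m$ is that the spatial average $(\a\nabla v(\cdot,\cu_n,p))_{z+\cu_m}$ is close, in $L^2(\mu)$, to $\ahom p$: one replaces $v(\cdot,\cu_n,p)$ by the localized solution $v(\cdot, z+\cu_m, p)$ at an $L^2$ cost controlled by the energy difference $\nu_{3^n}(\cu_m,p) - \nu_{3^n}(\cu_n,p) \leq C3^{-m\beta}$ via the second variation \eqref{e.quadresp.nu}, then invokes Lemma \ref{l.varest} at scale $m$ (with $\cu_m$ in place of $\cu_n$) for the fluctuations and the $\ahom(\cu_m)$ computation above plus Corollary \ref{c.coeff} for the mean. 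Summing $C3^{-m\beta}$ against the geometric weights in the multiscale Poincar\'e inequality produces the claimed $C3^{-n\beta}$ after relabeling $\beta$.

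The main obstacle is the bookkeeping in the multiscale step: one must verify that the localization error (solution on $\cu_n$ versus solution on the subcube $z+\cu_m$, restricted to that subcube) is genuinely controlled in $\underline L^2$ by the energy gap uniformly over all subcubes $z\in\mathcal Z_m$ and all scales $m\le n$ — this is where Lemma \ref{l.convergencesol} and the second variation \eqref{e.quadresp.nu} are used in tandem — and that the $\nabla\mathbf g$ contribution to the multiscale Poincar\'e inequality does not destroy the rate. The flux $\mathbf g$ is not itself a gradient, so the multiscale Poincar\'e inequality must be applied in the form valid for general (not necessarily gradient) vector fields, and one should double-check that the exponent $\beta$ coming out of Lemma \ref{l.varest} (which there was $2/3$ before relabeling in $d=2$) is positive and can be carried through; since we only claim existence of some $\beta(d,\lambda,\Lambda)>0$, taking the minimum of all exponents encountered suffices.
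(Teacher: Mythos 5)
Your proposal follows essentially the same route as the paper: apply the multiscale Poincar\'e inequality to $\a\nabla v(\cdot,\cu_n,p)-\ahom p$, and at each scale $3^m$ bound the $L^2(\mu)$ norm of the local spatial average by (i) the deterministic mean minus $\ahom p$, controlled by the identity $\langle (\a\nabla v(\cdot,y+\cu_m,p))_{y+\cu_m}\rangle_{\mu_{y+\cu_m}}=\ahom(\cu_m)p$ from \eqref{e.ahomQ} together with Corollary \ref{c.coeff}, (ii) the variance term, controlled by Lemma \ref{l.varest}, and (iii) the localization error between the global solution on $\cu_n$ and the local solution on $y+\cu_m$, controlled by the second variation \eqref{e.quadresp.nu} and Proposition \ref{p.convergence.muL}. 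These are exactly the three ingredients in the paper's triangle inequality split. Two small inaccuracies: Proposition \ref{p.MP} as stated does not carry a $\nabla\mathbf g$ term and does not require the input to be mean-zero, so the preliminary peeling off of $(\mathbf g)_{\cu_n}$ is harmless but unnecessary — the paper applies MP directly to $\mathbf g$ and handles the bottom scale via the $\underline L^2$ term. Otherwise the proposal matches the paper's proof.
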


\begin{proof}
We first notice, that by the representation of $\ahom (Q)$ in \eqref{e.ahomQ},  and the definition of $\mathsf B$ norm in \eqref{e.defBU},  for all $m\ge 1$ we may write the spatial average of the flux as 
\begin{equation}
\label{e.fluxave}
\langle \left( \a \nabla v(\cdot, \cu_m, p) \right)_{\cu_m} \rangle_{\mu_{\cu_m}}
= \ahom (\cu_m) p.
\end{equation}

 Recall that for $m\le n$,  $\mathcal{Z}_{m}:= 3^m\Zd \cap \cu_n$.   We apply the multiscale Poincar\'e inequality ( Proposition \ref{p.MP}) to obtain

\begin{align}
\label{e.fluxMP}
3^{-2n} \left\| \a \nabla v(\cdot, \cu_n, p) - \ahom p \right\|_{\underline{H}^{-1}(\cu_{n},\mu_{\cu_n})}
 \leq
C3^{-n} \left\|  \a \nabla v(\cdot, \cu_n, p) - \ahom p\right\|_{\underline{L}^{2}(\cu_{n},\mu_{\cu_n})} \\
+C\sum_{m=0}^{n-1}3^{m-n} \left\langle\left( 
\frac1{\left|\mathcal{Z}_{m}\right|}
\sum_{y\in \mathcal{Z}_{m}}
 \left| \left( \a \nabla v(\cdot, \cu_n, p) - \ahom p\right) _{y+\cu _{m}}\right|^{2}  \right)^{1/2}\right\rangle_{\mu_{\cu_n}}. \notag
 \end{align}
The first term on the right side above is bounded by 
 \begin{equation*}
 C3^{-n} \left(\left\|\nabla v(\cdot, \cu_n, p)\right\|_{\underline{L}^{2}(\cu_{n},\mu_{\cu_n})} +|p |^2\right)
 \leq C'3^{-n}
 \end{equation*}
 For the second term,  triangle inequality implies that 
  \begin{multline*}
\frac1{\left|\mathcal{Z}_{m}\right|}
\sum_{y\in \mathcal{Z}_{m}}
 \left| \left(  \a \nabla v(\cdot, \cu_n, p) - \ahom p\right) _{y+\cu _{m}}\right|^{2}  \\
 \leq 
\frac1{\left|\mathcal{Z}_{m}\right|}
\sum_{y\in \mathcal{Z}_{m}}
 \left| \left(\langle  \a \nabla v(\cdot, y+\cu_m, p) \rangle _{\mu_{y+\cu_m}}- \ahom p\right) _{y+\cu _{m}}\right|^{2}\\
 +
\frac1{\left|\mathcal{Z}_{m}\right|}
\sum_{y\in \mathcal{Z}_{m}}
 \left| \left(\langle  \a \nabla v(\cdot, y+\cu_m, p) \rangle _{\mu_{y+\cu_m}}-  \a \nabla v(\cdot, y+\cu_m, p) \right) _{y+\cu _{m}}\right|^{2}\\
 +
\frac1{\left|\mathcal{Z}_{m}\right|}
\sum_{y\in \mathcal{Z}_{m}}
 \left| \left( \a \nabla v(\cdot, \cu_n, p) -\a \nabla v(\cdot, y+\cu_m, p)\right) _{y+\cu _{m}}\right|^{2}  
  \end{multline*}
 By \eqref{e.fluxave} and Proposition \ref{p.convergence.muL}, we have
 \begin{equation}
 \label{e.a}
 \left| \left(\langle  \a \nabla v(\cdot, y+\cu_m, p) \rangle _{\mu_{y+\cu_m}}- \ahom p\right) _{y+\cu _{m}}\right|^{2}
 \leq \left| \ahom(\cu_m) - \ahom \right|^2 |p|^2
 \leq C\left( |p|+|q|+\mathsf{K}_0\right)^2 3^{-2\beta m}
 \end{equation}
 By the second variation,  we have 
 \begin{equation}
 \label{e.b}
 \left\langle \frac1{\left|\mathcal{Z}_{m}\right|}
\sum_{y\in \mathcal{Z}_{m}}
 \left| \left( \a \nabla v(\cdot, \cu_n, p) -\a \nabla v(\cdot, y+\cu_m, p)\right) _{y+\cu _{m}}\right|^{2} \right\rangle_{\mu_{\cu_n}}
 \leq 
 C(\nu_{3^n}(\cu_m,p) - \nu_{3^n}(\cu_n,p))
 \leq
 C3^{-m\beta}
 \end{equation}
 where the last inequality follows from the quantitative convergence of $\nu_{3^n}$ (Proposition \ref{p.convergence.muL}) and triangle inequality. 
 
 We also apply the variance estimate Lemma \ref{l.varest} to conclude that for every $y\in \mathcal Z_m$, 
 \begin{equation}
 \label{e.c}
  \left| \left(\langle  \a \nabla v(\cdot, y+\cu_m, p) \rangle _{\mu_{y+\cu_m}}-  \a \nabla v(\cdot, y+\cu_m, p) \right) _{y+\cu _{m}}\right|^{2}
  \leq
  C3^{-m\beta}
 \end{equation}
 
 Combining \eqref{e.a}, \eqref{e.b} and \eqref{e.c}, we conclude there exists $\beta=\beta(d,\lambda, \Lambda) >0$ and $C=C(d,\lambda, \Lambda) <\infty$ , such that 
 \begin{equation}
 \frac1{\left|\mathcal{Z}_{m}\right|}
\sum_{y\in \mathcal{Z}_{m}}
 \left| \left(  \a \nabla v(\cdot, \cu_n, p) - \ahom p\right) _{y+\cu _{m}}\right|^{2} 
 \leq 
  C3^{-m\beta}
 \end{equation}
 
 Substitute the above estimates into \eqref{e.fluxMP},  and summing over $m$,  we conclude the Lemma. 
\end{proof}

\subsection {Proof of Theorem \ref{t.HS}}


%
%

In the previous subsection,  we established the convergence of the solution to a Dirichlet problem with an affine boundary condition,  with an algebraic rate of convergence.  The equation \eqref{e.HS} we would like to homogenize is more general, but if we localize it on a mesoscale, the boundary condition becomes approximately affine. 
 In this section we prove Theorem \ref{t.HS} by estimating the homogenization
error in terms of the error in the convergence of the correctors and fluxes defined in
the previous subsection. The proof goes through a standard, deterministic argument known as the two-scale expansions,  that follows closely along the argument of \cite{AKM, AKMBook}.  

Given $R\ge 1$,  let $m =\inf\{k\in \N: 3^k \ge R\}$.  We may view the solution $u_R$ (and respectively, $u$) to the the equation \eqref{e.HS} (resp. \eqref{e.homog}) as elements in a slightly larger domain $\cu_m \times \Omega_0(\cu_m)$ (resp.  in $U:= \frac 1R \cu_m$).  Specifically,  we set the value of $u_R$ to be $0$ and $v(\cdot, \cu_n,  p)$ (defined in \eqref{e.BVP.v}) to be $\ell_p$ outside $Q_R$,  and set $u =0$ outside $[0,1]^2$.   

We now rescale the problem and study the Dirichlet problem in a fixed domain with mesh size goes to zero.  Denote by $\eps = R^{-1}$ and  for $i=1,\cdots, d$ define the finite volume corrector 
\begin{equation*}
 \chi_{e_i}^{\eps}(\cdot):= v(\cdot, \cu_m,  e_i) - \ell_{e_i}.
  \end{equation*}
For $u$ that solves (\ref{e.homog}), we construct the modified two scale
expansion 
\begin{equation*}
w^{\varepsilon }\left( x\right) =u\left( x\right) +\varepsilon \nabla u\left( x\right) \cdot \chi^\eps(\frac x\eps), \quad \forall x\in U
\end{equation*}%

\textbf{Step 1. } Substitute $w^{\varepsilon }$ into the
Eq. (\ref{e.HS}) to obtain there exist $\beta(d,\lambda,\Lambda)\in \left(0,\tfrac12\right]$ and $C(d,\lambda,\Lambda)<\infty$ such that, 
\begin{equation*}
\left\Vert \nabla^* \cdot \left( \a\nabla w^{\varepsilon }\right) -\mathcal L_\mu w^{\varepsilon }-\nabla \cdot f^{\varepsilon }\right\Vert
_{\underline H^{-1}\left(U, \mu_R\right) }\leq C\|f\|_{L^\infty(U)}\varepsilon ^{\beta }.
\end{equation*}

We begin by computing 
\begin{multline}
\nabla w^{\varepsilon } = \nabla u+\nabla
u\left( \cdot \right)  \cdot \nabla \chi^\eps +
\eps \sum_{j=1}^d \chi^\eps_{e_j} (\frac{\cdot}{\eps}) \nabla \nabla_j u \\
=  \sum_{j=1}^d  \nabla_j u \nabla v(\frac \cdot \eps,  \cu_m, e_j) + 
\eps \sum_{j=1}^d \chi^\eps_{e_j} (\frac{\cdot}{\eps}) \nabla \nabla_j u 
\label{w'}
\end{multline}
where we used the definition of $\chi^\eps_{e_j}$ in the last line. 
This yields
\begin{multline}
\nabla^* \cdot \a \nabla w^\eps
= 
 \sum_{j=1}^d  \nabla_j u \nabla^* \cdot \a  \nabla v(\frac \cdot \eps,  \cu_m, e_j)
 +  \sum_{j=1}^d  \nabla^* \cdot \nabla_j u \left( \a \nabla v(\frac \cdot \eps,  \cu_m, e_j) \right)
 + \eps \nabla^* \cdot \a \left( \sum_{j=1}^d \chi^\eps_{e_j} (\frac{\cdot}{\eps}) \nabla \nabla_j u  \right)
\end{multline}

We also obtain,  using that $u(x) $ has no $\phi$-dependence,  
\begin{equation*}
\mathcal L_\mu w^{\varepsilon }
= 
 \sum_{j=1}^d  \nabla_j u \cdot \mathcal L_\mu  v(\frac \cdot \eps,  \cu_m, e_j)
 \end{equation*}
 
 Therefore 
 \begin{multline}
 \label{e.4term}
 \nabla^* \cdot \a \nabla w^\eps - \mathcal L_\mu w^{\varepsilon }
 = 
 \sum_{j=1}^d  \nabla_j u  \left( \nabla^* \cdot \a  \nabla v(\frac \cdot \eps,  \cu_m, e_j) - \mathcal L_\mu v(\frac \cdot \eps,  \cu_m, e_j)\right)\\
 + 
  \sum_{j=1}^d  \nabla^* \cdot \nabla_j u \left( \a \nabla v(\frac \cdot \eps,  \cu_m, e_j) -\ahom e_j \right) 
  +
   \sum_{j=1}^d  \nabla^* \cdot \nabla_j u \cdot \ahom e_j
   + 
  \eps \nabla^* \cdot \a  \left( \sum_{j=1}^d \chi^\eps_{e_j} (\frac{\cdot}{\eps}) \nabla \nabla_j u  \right)
  \end{multline}
  
  The first term on the right side above vanishes since $v$ solves the equation \eqref{e.BVP.v}.  The third term is 
   \begin{equation*}
    \sum_{j=1}^d  \nabla^* \cdot \nabla_j u \cdot \ahom e_j
    =  \nabla^* \cdot \ahom \nabla u
    = f
    \end{equation*}
    
  To estimate the rest two terms,  notice that $u$ solves the constant coefficient equation \eqref{e.homog},  and therefore  
   \begin{equation*}
   \left\| \nabla^k \nabla_j u \right\|_{L^\infty(U)}
   \leq
    \left\| \int_{[0,1]^2} \nabla^{k+1} \nabla_j G_{\ahom}(x,y) f(y) \, dy \right\|_{L^\infty([0,1]^2)}
    \leq
    C\| f\|_{L^\infty([0,1]^2)},
      \end{equation*}
      where $G_{\ahom}(x,y)$ is the Green's function for the operator $\nabla^* \cdot \ahom \nabla$ in $[0,1]^2 $ with zero boundary condition.

      We apply the quantitative convergence of the corrector (Lemma \ref{l.convergencesol})  and the fluxes (Lemma \ref{l.convergenceflux}) in the rescaled setting,  which implies there there exist $\beta(d,\lambda,\Lambda)\in \left(0,\tfrac12\right]$ and $C(d,\lambda,\Lambda)<\infty$ such that, 
       \begin{multline*}
        \sum_{j=1}^d  \left\| \nabla^* \cdot \nabla_j u \left( \a \nabla v(\cdot,  e_j) -\ahom e_j \right)\right\|_{\underline H^{-1}(U, \mu_R)} \\
        \leq
        C \left\|  \nabla_j u \right\|_{L^\infty} \sum_{j=1}^d  \left\|  \a \nabla v(\cdot,  e_j) -\ahom e_j  \right\|_{\underline H^{-1}(U, \mu_R)}
        \leq
        C\| f\|_{L^\infty([0,1]^2)} \eps^\beta
 \end{multline*}
 and 
   \begin{equation*}
 \left\|  \eps \nabla^* \cdot \a  \left( \sum_{j=1}^d \chi^\eps_{e_j} (\frac{\cdot}{\eps}) \nabla \nabla_j u  \right)\right\|_{L^2(U, \mu_R)}
   \leq
    C\left\| \nabla \nabla_j u \right\|_{L^\infty}  \eps \sum_{j=1}^d \| \chi^\eps_{e_j} \|_{\underline L^2( Q_R, \mu_R)}
      \leq
        C\| f\|_{L^\infty([0,1]^2)} \eps^\beta
 \end{equation*}
 Thus we conclude 
 \begin{equation*}
\left\Vert \nabla^* \cdot \left( \a\nabla w^{\varepsilon }\right) +\mathcal L_\mu w^{\varepsilon }-f\right\Vert
_{\underline H^{-1}(U, \mu_R)}\leq C\varepsilon ^{\beta }.
\end{equation*}
And  Step 1 follows from the fact that $\|\nabla^*\cdot f^\eps(\frac \cdot \eps) - f(\cdot)\|_{L^\infty([0,1]^2)} \le \eps^{\alpha}$. 

\textbf{Step 2} \ \ We deduce that there exist $\beta(d,\lambda,\Lambda)\in \left(0,\tfrac12\right]$ and $C(d,\lambda,\Lambda)<\infty$ such that, 
\begin{equation*}
\left\Vert u^\eps-w^{\varepsilon }\right\Vert
_{\underline H^{-1}(U, \mu_R)}\leq C \|f\|_{L^\infty (U)}\varepsilon ^{\beta }.
\end{equation*}

Since $u$ solves \eqref{e.HS},  we use the coercivity of the $\mathsf B_{\mu,Q}$ norm defined in \eqref{e.defBU} to obtain 
\begin{equation*}
\left\Vert u^\eps-w^{\varepsilon }\right\Vert
_{\underline H^{-1}(U, \mu_R)}^2\leq
C \mathsf{B}_{\mu_R, Q_R}\left[u^\eps-w^\eps,u^\eps - w^\eps\right]
\end{equation*}
Testing \eqref{e.HS} by $u^\eps-w^\eps$ then implies 
\begin{equation*}
\mathsf{B}_{\mu_R, Q_R} \left[ u^\eps-w^\eps ,u^\eps \right]
=
\frac1{|Q_R|}  \sum_{e\in \mathcal{E}(\eps Q_R)} 
\left\langle 
(\nabla u^\eps (\frac e {\eps},\cdot) -\nabla w^\eps(e,\cdot)  f^\eps(\frac e {\eps}))
\right\rangle_{\mu_R},
\end{equation*}

Therefore by the definition of $\mathsf B_{\mu,Q}$ and Poincar\'e inequality, 
\begin{multline*}
\left\Vert \nabla u^\eps- \nabla w^{\varepsilon }\right\Vert
_{L^2\left( U, \mu_R\right) }^2
\leq 
\frac1{|Q_R|}  \sum_{e\in \mathcal{E}(\eps Q_R)} 
\left\langle 
(\nabla u^\eps(\frac e {\eps},\cdot) -\nabla w^\eps(e,\cdot)  f^\eps(\frac e {\eps}))
\right\rangle_{\mu_R}
-  C\mathsf{B}_{\mu_R, Q_R}\left[w^\eps,u ^\eps- w^\eps\right] \\
\leq 
C \left\Vert u^\eps-w^{\varepsilon }\right\Vert
_{H^{1}\left( \eps Q_R, \mu_R \right) }
\left\Vert \nabla^* \cdot \left( \a\nabla w^{\varepsilon }\right) -\mathcal L_\mu w^{\varepsilon }-\nabla^* \cdot f^{\varepsilon }\right\Vert
_{H^{-1}\left(\eps Q_R, \mu_R \right) } \\
\leq
C \left\Vert \nabla u^\eps- \nabla w^{\varepsilon }\right\Vert
_{L^2\left( \eps Q_R, \mu_R \right) } 
\left\Vert \nabla^* \cdot \left( \a\nabla w^{\varepsilon }\right) -\mathcal L_\mu w^{\varepsilon }-\nabla^* \cdot f^{\varepsilon }\right\Vert
_{\underline H^{-1}(U, \mu_R)}
\end{multline*} 
Absorbing $\left\Vert \nabla u^\eps- \nabla w^{\varepsilon }\right\Vert
_{L^2\left( \eps Q_R, \mu_R \right) } $ to the left side and combining with Step 1 we conclude Step 2.

\textbf{Step 3} We show that there exist $\beta(d,\lambda,\Lambda)\in \left(0,\tfrac12\right]$ and $C(d,\lambda,\Lambda)<\infty$ such that, 
\begin{equation*}
\left\Vert w^{\varepsilon }-u\right\Vert _{L^2\left( \eps Q_R, \mu_R \right) } +\left\Vert
\nabla w^{\varepsilon }-\nabla u\right\Vert _{\underline H^{-1}(U, \mu_R)}
\leq C\varepsilon ^{\beta },
\end{equation*}%

When $f^\eps$ is a constant in $Q_R$,  this just follows from the quantitative convergence of the corrector (Lemma \ref{l.convergencesol}).  Here we simply compute the derivative of the second term in the two-scale expansion.  

Indeed,  we have 
\begin{equation*}
\left\Vert
\nabla w^{\varepsilon }-\nabla u\right\Vert _{\underline H^{-1}(U, \mu_R)}
\leq
C \left\| \eps \sum_{j=1}^d   \chi^\eps_{e_j} (\frac{\cdot}{\eps})  \nabla_j u \right\|_{L^2(U, \mu_R)}
\leq
C\| \nabla_j u \|_{L^\infty([0,1]^2)} \sum_{j=1}^d  \eps \|\chi^\eps_{e_j} (\frac{\cdot}{\eps}) \|_{L^2(\eps Q_R, \mu_R)}
\end{equation*}

Applying Lemma \ref{l.convergencesol} we conclude 
\begin{equation*}
\left\Vert
\nabla w^{\varepsilon }-\nabla u\right\Vert _{\underline H^{-1}(U, \mu_R)}
\leq C\varepsilon ^{\beta },
\end{equation*}
Since $w^\eps - u \in H_0^1(U, \mu_R)$,  we apply Lemma \ref{l. H-1} to conclude 
\begin{equation*}
\left\Vert w^{\varepsilon }-u\right\Vert _{L^2\left( \eps Q_R, \mu_R \right) } 
\leq 
C
\left\Vert
\nabla w^{\varepsilon }-\nabla u\right\Vert _{\underline H^{-1}(U, \mu_R)}
\leq C\varepsilon ^{\beta }.
\end{equation*}

\textbf{Step 4} Finally,  we conclude by combining Steps 2 and 3 which yields

\begin{equation*}
\left\Vert \nabla u^{\varepsilon }-\nabla u \right\Vert_{\underline H^{-1}(U, \mu_R)}
\leq
\left\Vert \nabla u^{\varepsilon }-\nabla w^\eps \right\Vert _{\underline H^{-1}(U, \mu_R)}
+ \left\Vert \nabla w^{\varepsilon }-\nabla u \right\Vert _{\underline H^{-1}(U, \mu_R)}
\leq 
C(1+ \|f\|_{L^\infty(U)})\eps^\beta. 
\end{equation*}
Finally,  the bound for $\| u^\eps - u\|_{L^2 (\eps Q_R, \mu_R)}$ follows from the above $\underline H^{-1}$ bound for the gradients and the fact that since $u^\eps - u \in H_0^1(U, \mu_R)$,  we have by Lemma \ref{l. H-1}
\begin{equation*}
\| u^\eps - u\|_{\underline L^2 (\eps Q_R, \mu_R)}
\leq
C \left\Vert \nabla u^{\varepsilon }-\nabla u \right\Vert_{\underline H^{-1}(U, \mu_R)}.
\end{equation*}

\section{Decoupling of the $\nabla \phi$-field}
\label{s.decouple}

\subsection{Approximate harmonic coupling}

For the discrete Gaussian Free Field(GFF), there is a nice
orthogonal decomposition. More precisely, the conditioned field inside the
domain is the discrete harmonic extension of the boundary value to the whole
domain plus an \textit{independent }copy of a \textit{zero boundary}
discrete GFF.

While this exact decomposition does not carry over to general
gradient Gibbs measures, the next result due to Jason Miller, see \cite{Mi},
provides an approximate version.

\begin{theorem}[\protect\cite{Mi}]
\label{t.decouple} Let $D\subset \mathbb{Z}^{2}$ be a simply connected domain
of diameter $R$, and denote $D^{r}=\left\{ x\in D:\text{dist}%
(x,\partial D)>r\right\} $.  Suppose that $f:\partial
D\rightarrow \mathbb{R}$ satisfies $\max_{x\in \partial D}\left\vert f\left(
x\right) \right\vert \leq 2 \left\vert \log R\right\vert ^{2}$.
Let $\phi $ be sampled from the Gibbs measure (\ref{e.GLD}) on $D$
with zero boundary condition, and let $\phi ^{f}$ be sampled from
Gibbs measure on $D$ with boundary condition $f$. Then there exist
constants $c,\gamma ,\delta ^{\prime }\in \left( 0,1\right) $, that only
depend on $V$, so that if $r>cR^{\gamma }$ then the following holds. There
exists a coupling $\left( \phi ,\phi ^{f}\right) $, such that if $\hat{\phi}%
:D^{r}\rightarrow \mathbb{R}$ is discrete harmonic with $\hat{\phi}%
|_{\partial D^{r}}=\phi ^{f}-\phi |_{\partial D^{r}}$, then 
\begin{equation*}
\mathbb{P}\left( \phi ^{f}=\phi +\hat{\phi}\text{ in }D^{r}\right) \geq
1-c R^{-\delta ^{\prime }}.
\end{equation*}
\end{theorem}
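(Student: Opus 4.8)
The plan is a dynamic coupling through the Langevin dynamics, made exact by one interior resampling whose cost is controlled by the quantitative homogenization of \sref{CLT}; write $\psi:=\phi^f-\phi$. First I would pass to a good event: by the Brascamp--Lieb inequality (\pref{BL}) and a union bound, $\max_{\partial D^r}(|\phi|+|\phi^f|)\lesssim(\log R)^2$ off an event of probability $\le cR^{-\delta'}$ (using $\max_{\partial D}|f|\le 2|\log R|^2$ and the Gaussian maximum principle), so it suffices to construct the coupling on this event.

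Next I would couple dynamically. Run a stationary copy $t\mapsto\phi_t$ of the Glauber--Langevin dynamics reversible for $\mu_D^0$, and run $\phi^f_t$ from time $-T$, with $T\asymp R^2(\log R)^2$, started at $\phi^f_{-T}=\phi_{-T}+\bar H$ where $\bar H$ is the discrete harmonic extension of $f$ to $D$, \emph{driven by the same noise}; at time $0$ apply a maximal coupling over the exponentially small residual total variation so that $\phi^f_0\sim\mu_D^f$ exactly. Because the noise is shared, $\psi_t$ is noise-free and solves the uniformly elliptic discrete parabolic equation $\partial_t\psi_t=\nabla^*\!\cdot(\a_t\nabla\psi_t)$ in $D^\circ$ with $\psi_t|_{\partial D}=f$ and $\a_t(e):=\int_0^1\mathsf V''(\nabla\phi_t(e)+s\nabla\psi_t(e))\,ds\in[\lambda,\Lambda]$, started from the harmonic datum $\psi_{-T}=\bar H$. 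The gain is that the small-scale roughness of $\phi$ and $\phi^f$ cancels in $\psi$: by the discrete parabolic De Giorgi--Nash--Moser estimate and $|\psi_t|\le\|f\|_\infty$ (maximum principle), $\psi_0$ is H\"older at scale $r$ on $D^r$, so there is $\alpha=\alpha(\lambda,\Lambda)>0$ with $\max_{e\in\mathcal E(D^r)}|\nabla\psi_0(e)|\lesssim r^{-\alpha}(\log R)^2$; in particular the trace $a^f-a:=\psi_0|_{\partial D^r}$ is slowly varying.

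Now I would make the coupling exact by a single interior resampling. Conditionally on the configurations outside $(D^r)^\circ$ --- which fixes $a:=\phi_0|_{\partial D^r}$ and $a^f:=\phi^f_0|_{\partial D^r}$ --- put $\hat\phi:=\mathrm{HarmExt}_{D^r}(a^f-a)$; by the domain Markov property $\phi_0|_{(D^r)^\circ}\sim\mu^a_{D^r}$ and $\phi^f_0|_{(D^r)^\circ}\sim\mu^{a^f}_{D^r}$, and the two configurations $\phi^f_0|_{(D^r)^\circ}$ and $(\phi_0+\hat\phi)|_{(D^r)^\circ}$ share the trace $a^f$ on $\partial D^r$. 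Replacing $\phi^f_0|_{(D^r)^\circ}$ by the optimal coupling of $\mu^{a^f}_{D^r}$ with the pushforward $\mathrm{Shift}_{\hat\phi}\mu^a_{D^r}$ yields $\phi^f_0=\phi_0+\hat\phi$ in $D^r$ except with probability $\mathrm{TV}(\mu^{a^f}_{D^r},\mathrm{Shift}_{\hat\phi}\mu^a_{D^r})$; relabelling $(\phi,\phi^f):=(\phi_0,\phi^f_0)$ then finishes the proof once this total variation is $\le cR^{-\delta'}$ (whereupon all failure probabilities add up for $\gamma$ chosen large in terms of $\alpha$ and $\beta$). The density of $\mathrm{Shift}_{\hat\phi}\mu^a_{D^r}$ relative to $\mu^{a^f}_{D^r}$ is $e^{\tilde G}/\langle e^{\tilde G}\rangle$ with $\tilde G=\sum_{e\in\mathcal E(D^r)}[\mathsf V(\nabla\psi(e))-\mathsf V(\nabla\psi(e)-\nabla\hat\phi(e))]$, so via $\mathrm{TV}\lesssim\var_{\mu^{a^f}_{D^r}}(\tilde G)^{1/2}$ (the exponential moment input again coming from \pref{BL}) it suffices to bound $\var(\tilde G)$. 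Taylor-expanding in $\nabla\hat\phi$, the $\mathsf V''$- and $\mathsf V'''$-contributions are $O(r^{-\alpha}(\log R)^{O(1)})$ because $\|\nabla\hat\phi\|_\infty\lesssim r^{-\alpha}(\log R)^2$, whereas the linear term $\sum_e\mathsf V'(\nabla\psi(e))\nabla\hat\phi(e)$, summed by parts over $D^r$, equals $-\sum_{x\in(D^r)^\circ}\hat\phi(x)(\nabla^*\!\cdot\mathsf V'(\nabla\psi))(x)$ plus a boundary term that is small since $\hat\phi|_{\partial D^r}$ is slowly varying; and since $\hat\phi$ is \emph{discrete harmonic}, i.e. $\nabla^*\!\cdot(\ahom\nabla\hat\phi)=\bar\lambda\Delta\hat\phi=0$ --- here $\ahom=\bar\lambda\,\mathrm{Id}$ is a scalar because of the $D_4$-symmetry of the nearest-neighbour model on $\mathbb Z^2$ --- the covariance $\langle(\nabla^*\!\cdot\mathsf V'(\nabla\psi))(x)\,(\nabla^*\!\cdot\mathsf V'(\nabla\psi))(x')\rangle$, paired against $\hat\phi(x)\hat\phi(x')$, produces $\bar\kappa\langle\hat\phi,\Delta\hat\phi\rangle=0$ for its \emph{homogenized Gaussian surrogate}, and differs from that surrogate only by a relative error $R^{-\beta}$ --- an instance of the quantitative homogenization of the Helffer--Sj\"ostrand equation established in \sref{CLT} (\tref{qclt}, \tref{HS}). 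Hence $\var(\tilde G)\lesssim R^{-\beta}(\log R)^{O(1)}$.

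The hard part is this last estimate, specifically the $\mathsf V'$-linear bulk term: one must convert the \emph{discrete harmonicity} of $\hat\phi$ into a genuine cancellation, which is possible only because the homogenized matrix is isotropic, and then quantify the error of replacing the true covariance of $\nabla^*\!\cdot\mathsf V'(\nabla\psi)$ by its homogenized surrogate --- exactly what the quantitative theory of \sref{CLT} supplies. A secondary, purely technical difficulty is the bookkeeping of the exponents $\alpha$ (De Giorgi), $\beta$ (homogenization) and the various boundary-layer widths, which fixes the admissible range of $\gamma$; it may be cleanest to run the resampling on $D^r$ while invoking the homogenization bounds at an intermediate scale such as $\sqrt{rR}$ so that all powers of $R$ close up.
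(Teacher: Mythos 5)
The paper does not prove \tref{decouple}; it is imported verbatim from Miller~\cite{Mi} and treated as a black box input. So the comparison here is against Miller's argument, not a proof in this paper, and your sketch follows a genuinely different route. Miller couples the two fields through the synchronous Langevin dynamics, controls the difference process via Nash--Aronson heat-kernel and parabolic H\"older estimates, and then exploits the near-harmonicity of the (already coupled) difference directly; at no point does he invoke a quantitative Helffer--Sj\"ostrand homogenization of the kind developed in \sref{CLT} (which postdates his work). Your proposal replaces the second half of Miller's argument by an interior total-variation resampling whose cost is estimated through \tref{qclt}.

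There is a concrete gap in the resampling step. You want $\var(\tilde G)$ small, where the dominant term of $\tilde G$ is the $\mathsf V'$-linear functional $\sum_e \mathsf V'(\nabla\phi(e))\nabla\hat\phi(e)$, and you argue for this by paring against the covariance of $\nabla^*\!\cdot\mathsf V'(\nabla\phi)$ and invoking the quantitative CLT. But \tref{qclt} (and \tref{HS}) controls the variance of the \emph{gradient-weighted} linear statistic $\sum_e \nabla\phi(e) f_R(e)$; via Helffer--Sj\"ostrand the relevant source there is $\nabla^*\!\cdot f_R$, which for $f_R=\nabla\hat\phi$ indeed vanishes by harmonicity. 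The observable you actually need, however, is \emph{flux}-weighted: its Helffer--Sj\"ostrand source is $\partial_x\bigl(\sum_e\mathsf V'(\nabla\phi(e))\nabla\hat\phi(e)\bigr)=-\nabla^*\!\cdot\bigl(\a\,\nabla\hat\phi\bigr)(x)$, where $\a(e)=\mathsf V''(\nabla\phi(e))$. This is $\nabla^*\!\cdot\bigl((\a-\ahom)\nabla\hat\phi\bigr)$ after subtracting the vanishing $\ahom$-part, and the random fluctuation $\a-\ahom$ is $O(1)$; nothing in \tref{qclt} or in Lemmas~\ref{l.convergencesol}--\ref{l.convergenceflux} gives a variance bound of order $R^{-\beta}$ for such a source (those lemmas concern the corrector/flux of the Dirichlet problem with an affine boundary condition, not arbitrary oscillating right-hand sides). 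Establishing that estimate is essentially as hard as proving a quantitative two-scale expansion for the flux observable, which you cannot simply cite.

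A secondary issue is the exponent bookkeeping you flag as ``purely technical'': it is not. The quadratic remainder in the Taylor expansion of $\tilde G$ contributes $\sum_e(\nabla\hat\phi(e))^4$ to $\var(\tilde G)$ from the boundary layer of $D^r$, which is of order $r^{1-4\alpha}(\log R)^{O(1)}$ where $\alpha$ is the interior parabolic De Giorgi--Nash--Moser exponent (you only get $C^\alpha$ for $\psi_0$, not $C^{1,\alpha}$, so the gradient bound is the $C^\alpha$ seminorm at unit scale). De Giorgi does not guarantee $\alpha>1/4$. This is precisely the kind of obstruction that Miller's Nash--Aronson argument is engineered to avoid, and it needs to be resolved in your approach before the range of $\gamma$ can be fixed. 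As it stands the proposal records a plausible strategy but does not constitute a proof.
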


Here and in the sequel of the paper, for a set $A\subset \mathbb{Z}^2$ and a
point $x\in \mathbb{Z}^2$, we use $\text{dist}(x,A)$ to denote
the (lattice) distance from $x$ to $A$.  Since
the above theorem  requires that the boundary condition $f$ is not too large, we
introduce the \textquotedblleft good\textquotedblright\ event 
\begin{equation*}
\mathcal{M} =\left\{ \phi :\max_{v\in D}\left\vert \phi
\left( v\right) \right\vert <\left( \log R\right) ^{2}\right\} ,
\end{equation*}%
which is typical since the Brascamp-Lieb inequality implies that $%
\max_{v\in D}|\phi (v)|\leq O\left( \log R\right) $ with high probability.
Indeed,  by applying the exponential Brascamp-Lieb inequality \eqref{e.BL.linexp} and 
a union bound we immediately obtain 

\begin{lemma}
\label{l.bad}There is some $c_{1}>0$, such that $\mathbb{%
P}^{D,0}\left( \mathcal{M} ^{c}\right) \leq \exp \left(
-c_{1}\left( \log R\right) ^{3}\right) $.
\end{lemma}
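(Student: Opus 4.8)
The statement to prove is the tail bound $\mathbb{P}^{D,0}(\mathcal{M}^c)\le\exp(-c_1(\log R)^3)$, where $\mathcal{M}^c=\{\max_{v\in D}|\phi(v)|\ge(\log R)^2\}$ under the zero-boundary Gibbs measure on the simply connected domain $D$ of diameter $R$. The plan is to combine the exponential Brascamp--Lieb inequality \eqref{e.BL.linexp} with a union bound over the at most $|D|\le CR^2$ vertices. First I would fix $v\in D$ and apply \eqref{e.BL.linexp} with $f=\omega_v$ (the unit mass at $v$) so that $\sum_{y\in D}\phi(y)f(y)=\phi(v)$; this gives, for every $t\in\R$,
\begin{equation*}
\log\langle e^{t\phi(v)}\rangle_{\mu_D^0}\le\frac{t^2}{2\lambda}G_D(v,v).
\end{equation*}

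The key quantitative input is the bound $G_D(v,v)\le C\log R$, which holds because $D$ has diameter $R$ and the two-dimensional lattice Green's function with Dirichlet boundary data on a domain of diameter $R$ grows at most logarithmically. Using this together with the exponential Chebyshev inequality, $\mathbb{P}^{D,0}(\phi(v)\ge a)\le e^{-ta}\langle e^{t\phi(v)}\rangle\le\exp(-ta+\frac{C t^2}{2\lambda}\log R)$, and optimizing over $t$ (namely $t=\lambda a/(C\log R)$) yields the Gaussian-type tail
\begin{equation*}
\mathbb{P}^{D,0}(\phi(v)\ge a)\le\exp\!\Big(-\frac{\lambda a^2}{2C\log R}\Big).
\end{equation*}
By symmetry ($V$ even, hence $\mu_D^0$ invariant under $\phi\mapsto-\phi$) the same bound holds for $\mathbb{P}^{D,0}(\phi(v)\le-a)$. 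Taking $a=(\log R)^2$ gives a single-site bound of order $\exp(-c\lambda(\log R)^3)$.

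Finally I would take a union bound over $v\in D$: since $|D|\le CR^2=\exp(2\log R+\log C)$, and $2\log R$ is negligible compared with $(\log R)^3$ for $R$ large, we get
\begin{equation*}
\mathbb{P}^{D,0}(\mathcal{M}^c)\le 2|D|\exp\!\Big(-\frac{\lambda(\log R)^3}{2C}\Big)\le\exp\!\big(-c_1(\log R)^3\big)
\end{equation*}
for a suitable $c_1>0$ and all $R$ large enough (with the constant possibly adjusted to absorb small $R$). The only genuinely substantive step is the diagonal Green's function estimate $G_D(v,v)\le C\log R$; everything else is the routine exponential-moment/union-bound argument. I expect that estimate to be either cited or deduced quickly from monotonicity of the Green's function in the domain (enclose $D$ in a box of side $\le 2R$ and use the explicit logarithmic bound there, since Dirichlet Green's functions are monotone under domain inclusion).
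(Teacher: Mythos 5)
Your proposal is correct and matches the paper's intended argument exactly: the paper states the lemma follows ``by applying the exponential Brascamp-Lieb inequality \eqref{e.BL.linexp} and a union bound,'' which is precisely the route you take (exponential moment bound at a single site using $G_D(v,v)\le C\log R$, exponential Chebyshev with optimization in $t$, then a union bound over $|D|=O(R^2)$ sites). The details you fill in --- the choice $f=\omega_v$, the optimal tilt $t=\lambda a/(C\log R)$, and the observation that $2\log R$ is dwarfed by $(\log R)^3$ --- are all sound, and your suggestion to obtain the diagonal Green's function estimate via domain monotonicity is the standard way to do it.
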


We will use repeatedly the following consequence of Theorem \ref{t.decouple}%
. It applies to functions $\rho $ such that the integral of $\rho $ against
a harmonic function is always zero.

\begin{lemma}[\cite{BW},  Lemma 2.7]
\label{l.jasonerr}There exists constants $\delta=\delta(V) ,\gamma=\gamma(V) >0$ and $C<\infty$, such that for any
simply connected $D\subset \mathbb{Z}^{2}$ of diameter $R$, any $r>R^{\gamma
}$ and any $\rho :D\rightarrow \mathbb{R}$ supported on $D^{r}$ that
satisfies $\sum_{x\in D^{r}}\rho \left( x\right) f\left( x\right) =0$ for
all functions $f$ harmonic in $D^{r}$, and $\sum_{y\in
D}\left\vert \rho \left( y\right) \right\vert <\infty $, we have for $R$
large enough, 
\begin{eqnarray*}
&&\left\vert \mathbb{E}^{D,f}\left[ \exp \left( \sum_{x\in D}\rho
\left( x\right) \phi ^{f}\left( x\right) \right) 1_{\mathcal{M}}\right] -%
\mathbb{E}^{D,0}\left[ \exp \left(\sum_{x\in D}\rho \left( x\right)
\phi \left( x\right) \right) 1_{\mathcal{M}}\right] \right\vert \\
&\leq &2\exp \left( C\var_{G,D}\left( \sum_{x\in D}\rho
\left( x\right) \phi \left( x\right) \right) \right) R^{-\delta }\text{.}
\end{eqnarray*}%
\end{lemma}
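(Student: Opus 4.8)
The plan is to use the coupling provided by Theorem~\ref{t.decouple} to transfer the expectation under $\mathbb{E}^{D,f}$ to an expectation under $\mathbb{E}^{D,0}$, paying an error only on the (low-probability) event that the coupling fails, and then to exploit the hypothesis $\sum_{x\in D^r}\rho(x)h(x)=0$ for harmonic $h$ to kill the contribution of the harmonic extension $\hat\phi$. First I would fix $r>R^{\gamma}$ (with $\gamma$ as in Theorem~\ref{t.decouple}, enlarging it if needed so that $r>cR^{\gamma}$) and work on the good event $\mathcal{M}$, so that the boundary data of $\phi^{f}$ along $\partial D^r$ is bounded by $(\log R)^2 \le 2|\log R|^2$ and Theorem~\ref{t.decouple} applies. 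On the coupling event $\{\phi^{f}=\phi+\hat\phi \text{ in } D^r\}$, since $\rho$ is supported on $D^r$ and $\hat\phi$ is discrete harmonic in $D^r$, we get
\begin{equation*}
\sum_{x\in D}\rho(x)\phi^{f}(x) = \sum_{x\in D^r}\rho(x)\phi(x) + \sum_{x\in D^r}\rho(x)\hat\phi(x) = \sum_{x\in D}\rho(x)\phi(x),
\end{equation*}
so the two exponentials agree exactly.

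Next I would estimate the error from the event where the coupling fails. Writing $X_\rho:=\sum_{x\in D}\rho(x)\phi(x)$ (and similarly $X^f_\rho$), the difference of the two expectations restricted to $\mathcal{M}$ is bounded, via the coupling, by
\begin{equation*}
\mathbb{E}\Big[\big(\exp(X^f_\rho)+\exp(X_\rho)\big)\,\mathbf 1_{\{\text{coupling fails}\}\cup\mathcal{M}^c}\Big],
\end{equation*}
and by Cauchy--Schwarz this is at most $\big(\mathbb{E}[\exp(2X^f_\rho)]^{1/2}+\mathbb{E}[\exp(2X_\rho)]^{1/2}\big)\,\mathbb{P}(\text{coupling fails})^{1/2}$. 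The probability of coupling failure is $\le cR^{-\delta'}$ by Theorem~\ref{t.decouple}, contributing $R^{-\delta'/2}$; I would set $\delta:=\delta'/2$ (shrinking if necessary). For the exponential moments, I would apply the exponential Brascamp--Lieb inequality \eqref{e.BL.linexp} for both $\mu_D^f$ and $\mu_D^0$: since the Gibbs measure with boundary data $f$ is still log-concave with the same lower bound $\lambda$, we obtain $\mathbb{E}^{D,f}[\exp(2X^f_\rho)]\le \exp\big(C\var_{G,D}(X_\rho)\big)$ up to a shift by the (deterministic) harmonic extension of $f$, which by the hypothesis on $\rho$ contributes nothing to $\sum_x\rho(x)\cdot(\text{harmonic})$. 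This gives the factor $\exp\big(C\var_{G,D}(\sum_x\rho(x)\phi(x))\big)$, and collecting the two terms yields the claimed bound $2\exp(C\var_{G,D}(X_\rho))R^{-\delta}$.

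The main obstacle I anticipate is the bookkeeping around the good event $\mathcal{M}$ and the boundary data: Theorem~\ref{t.decouple} requires the boundary condition on $\partial D^r$ (which is $\phi^f-\phi$ there) to be at most $2|\log R|^2$, and this must be arranged by conditioning on $\mathcal{M}$ for \emph{both} fields while keeping the coupling intact; one must be careful that the indicator $\mathbf 1_{\mathcal M}$ appearing in the statement refers to the field being integrated, and that the exponential-moment bounds used in the Cauchy--Schwarz step are uniform. A secondary technical point is verifying that the Brascamp--Lieb variance bound with nonzero boundary condition $f$ reduces cleanly, using $\sum_{x}\rho(x)h(x)=0$ for the harmonic $h$ that matches $f$, to the zero-boundary Gaussian variance $\var_{G,D}(\sum_x\rho(x)\phi(x))$; this is where the structural hypothesis on $\rho$ is essential and should be invoked explicitly.
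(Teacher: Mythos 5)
The paper only cites~\cite{BW} (Lemma~2.7) for this statement and does not reproduce a proof, so the comparison is against what the statement's ingredients naturally support. Your plan is the right one and matches the standard derivation from Theorem~\ref{t.decouple}: couple $\phi^f$ and $\phi$, use that $\hat\phi$ is harmonic in $D^r$ and $\rho$ annihilates harmonics so the two exponentials coincide on the coupling event, then bound the coupling-failure contribution by Cauchy--Schwarz together with the exponential Brascamp--Lieb bound \eqref{e.BL.linexp}.

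Two points need tightening. First, the remark that the shift is by ``the (deterministic) harmonic extension of $f$'' and hence vanishes against $\rho$ is only exact when $V$ is quadratic; for general uniformly convex $V$, the conditional mean $\langle\phi^f\rangle_{\mu_D^f}$ is not discrete harmonic, so $\sum_x\rho(x)\langle\phi^f(x)\rangle$ does not vanish identically. What saves the argument is the coupling itself: on the event $S$ that the coupling succeeds, $X^f_\rho=X_\rho$, and since $\langle X_\rho\rangle_{\mu_D^0}=0$ by symmetry, $\langle X^f_\rho\rangle$ is bounded in magnitude by $C\,\var_{G,D}(X_\rho)^{1/2}\,\P(S^c)^{1/2}=O(R^{-\delta'/2})$ via Cauchy--Schwarz applied to the complementary event; this is enough to absorb the factor $e^{2\langle X^f_\rho\rangle}$ coming from the centered Brascamp--Lieb exponential-moment bound into the constant $C$ in the final estimate. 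You should invoke this explicitly rather than asserting exact harmonicity of the mean. Second, the boundedness requirement for Theorem~\ref{t.decouple} is on $f|_{\partial D}$, not on $\partial D^r$; the reason the indicator $\1_{\mathcal M}$ forces this is that $\phi^f|_{\partial D}=f$, so $\mathcal M$ applied to $\phi^f$ contains (implicitly) the constraint $\max_{\partial D}|f|<(\log R)^2\le 2|\log R|^2$, and when $f$ violates this bound the $f$-expectation is simply zero. With those two clarifications, the argument closes and yields the claimed bound with $\delta=\delta'/2$.
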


We will apply Lemma \ref{l.jasonerr} to the increment of harmonic averages of the field in $D$,  defined in the section below.  The increments have finite variances and thus changing the boundary to zero only gives an error of order $R^{-\delta}$.

\subsection{Harmonic averages\label{harmonic}}

We will apply Theorem %
\ref{t.decouple} to study the harmonic average of the
$\nabla\phi$ field. Given $B\subset \mathbb{Z}^{2}$, $v\in B$ and $y\in
\partial B$, we denote by $a_{B}\left( v,\cdot \right) $ the harmonic
measure on $\partial B$ seen from $v$. In other words, let $S^{x}$ denote
the simple random walk starting at $x$, and $\tau _{\partial B}=\inf \left\{
t>0:S\left[ t\right] \in \partial B\right\} $, we have 
\begin{equation*}
a_{B}\left( x,y\right) =\mathbb{P}\left( S^{x}\left[ \tau _{\partial B}%
\right] =y\right) .
\end{equation*}

Given $v\in \mathbb{Z}^{2}$ and $R>0$, let $B_{R}\left( v\right) =\left\{
y\in \mathbb{Z}^{2}:\left\vert v_{1}-y_{1}\right\vert ^{2}+\left\vert
v_{2}-y_{2}\right\vert ^{2}<R^{2}\right\} $.  When $v=0$ we simply write $B_{R}\left( v\right)$ as $B_R$. Define the circle
average of the field with radius $R$ at $v$ by 
\begin{equation}
C_{R}\left( v,\phi \right) =\sum_{y\in \partial B_{R}\left( v\right)
}a_{B_{R}\left( v\right) }\left( v,y\right) \phi \left( y\right) .
\label{e.CR}
\end{equation}

We introduce the geometric scales in order to carry out the multiscale 
argument to prove Lemma \ref{l.small} and \ref{l.medium}.  Let $ \gamma=\gamma(V) $ be the constant in Theorem \ref{t.decouple}, define the sequence of numbers $\left\{ r_{k}\right\}
_{k=1}^{\infty }$, $\left\{ r_{k,+}\right\} _{k=0}^{\infty }$ and $\left\{
r_{k,-}\right\} _{k=0}^{\infty }$ by%
\begin{eqnarray}
r_{k} &=&e ^{-k}N ,  \label{scale} \\
r_{k,+} &=&\left( 1+r_k^{-\gamma}\right) r_{k},  \notag \\
r_{k,-} &=&\left( 1-r_k^{-\gamma}\right) r_{k}.  \notag
\end{eqnarray}%
 We also define%
\begin{eqnarray*}
X_{r_{k,+}}\left( v\right) &=&\sum_{r=\left( 1-\frac 14 r_k^{-\gamma}\right)
r_{k,+}}^{\left( 1+\frac 14 r_k^{-\gamma}\right) r_{k,+}}\left(\frac 12 r_k^{1-\gamma}\right)^{-1} C_{r}\left( v,\phi \right) , \\
X_{r_{k,-}}\left( v\right) &=&\sum_{r=\left( 1-\frac 14 r_k^{-\gamma}\right)
r_{k,-}}^{\left( 1+\frac 14 r_k^{-\gamma}\right) r_{k,-}}\left(\frac 12 r_k^{1-\gamma}\right)^{-1} C_{r}\left( v,\phi \right) ,\\
X_{r_{k}}\left( v\right) &=&\sum_{r=\left( 1-\frac 14 r_k^{-\gamma}\right)
r_{k}}^{\left( 1+\frac 14 r_k^{-\gamma}\right) r_{k}}\left(\frac 12 r_k^{1-\gamma}\right)^{-1} C_{r}\left( v,\phi \right) , \notag
\end{eqnarray*}%
where $C_{r}$ is defined in (\ref{e.CR}).  

We also set the increment process $A_k: = X_{r_{k+1}} - X_{r_{k-}}$ 
and the boundary layer error $\mathcal E_k := X_{r_k} - X_{r_{k-}}$.

Notice that the harmonic average process $X$ defined above is slightly different from the one in \cite{BW},  Section 3. Namely, the width of the harmonic average is thinner (of the order $r_k^{1-\gamma}$ instead of $r_k$).  The increment of the harmonic
average process, $A_k$,  is crucial to the multiscale argument below.  We may write 

\begin{equation}
\label{e.ak}
A_k\left( v,\phi \right) =\left(
\sum_{r=\left( 1-\frac 14 r_{k+1}^{-\gamma}\right)
r_{k+1}}^{\left( 1+\frac 14 r_{k+1}^{-\gamma}\right) r_{k+1}}\left(\frac 12 r_{k+1}^{1-\gamma}\right)^{-1}
 -\sum_{r=\left( 1-\frac 14 r_k^{-\gamma}\right)
r_{k,-}}^{\left( 1+\frac 14 r_k^{-\gamma}\right) r_{k,-}}\left(\frac 12 r_k^{1-\gamma}\right)^{-1} \right) \sum_{y\in \partial B_{r}\left( v\right)
}a_{B_{r}\left( v\right) }\left( v,y\right) \phi \left( y\right) .
\end{equation}%
This can be written as $\sum_{y\in B_{r_k}(v)}\rho_{r_k} \left( v,y\right) \phi \left(
y\right) $, where we define%
\begin{equation}
\label{e.rho}
\rho_{r_k} \left( v,y\right) =\left[ \left(\frac 12 r_{k+1}^{1-\gamma}\right)^{-1} \indc_{|r-r_{k+1}| \leq \frac14 r_{k+1}^{1-\gamma}} -\left(\frac 12 r_k^{1-\gamma}\right)^{-1} \indc_{|r-r_{k,-}| \leq \frac14 r_{k}^{1-\gamma}} \right] a_{B_{\left\vert v-y\right\vert
}\left( v\right) }\left( v,y\right) .
\end{equation}%
Here we omit the dependence of $r$ in the definition of $\rho_{r_k}$.

We now state two important properties of the increment process $A_k$.  They are consequences of Theorem \ref{t.decouple},  and the proof are essentially the same as Lemma 3.1 and 3.2 of \cite{BW}.  

\begin{lemma}
\label{lem:harmtest}For any $k\in\N$, and any discrete harmonic function $h$ in $B_{r_k}$, we
have $\sum_{y\in B_{r_k}(v)}\rho_{r} \left( v,y\right) h\left( y\right) =0.$
\end{lemma}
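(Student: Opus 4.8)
The plan is to show that $\rho_{r_k}(v,\cdot)$ integrates harmonic functions to zero by exploiting the two defining features of the construction: each term is built from a harmonic measure $a_{B_r(v)}(v,y)$, and the coefficients in front of these harmonic measures sum to zero. First I would recall the mean value property for harmonic measure: if $h$ is discrete harmonic in $B_{r_k}(v)$ and $r \le r_k$, then $\sum_{y\in\partial B_r(v)} a_{B_r(v)}(v,y)\, h(y) = h(v)$. This is the standard statement that the expectation of a harmonic function at the exit point of a simple random walk from $B_r(v)$ equals its value at the starting point $v$; it holds for every radius $r$ appearing in the sums defining $X_{r_{k+1}}$ and $X_{r_{k,-}}$, since all such radii are at most $r_{k+1} = e^{-(k+1)}N < r_k$, so the relevant balls $B_r(v)$ are contained in $B_{r_k}(v)$ where $h$ is harmonic.

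Next I would simply substitute this into the representation \eqref{e.ak}. Summing $\sum_{y\in\partial B_r(v)} a_{B_r(v)}(v,y)\,h(y)$ against the radial coefficients, each circle average $C_r(v,h)$ collapses to the constant $h(v)$, so that
\begin{equation*}
\sum_{y\in B_{r_k}(v)} \rho_{r_k}(v,y)\, h(y)
= \left( \sum_{r=(1-\frac14 r_{k+1}^{-\gamma})r_{k+1}}^{(1+\frac14 r_{k+1}^{-\gamma})r_{k+1}} \left(\tfrac12 r_{k+1}^{1-\gamma}\right)^{-1}
- \sum_{r=(1-\frac14 r_k^{-\gamma})r_{k,-}}^{(1+\frac14 r_k^{-\gamma})r_{k,-}} \left(\tfrac12 r_k^{1-\gamma}\right)^{-1} \right) h(v).
\end{equation*}
It remains to check that the scalar in parentheses vanishes, which is where the normalization in the definitions of $X_{r_{k+1}}$ and $X_{r_{k,-}}$ is chosen precisely: each of the two sums runs over an integer range of length approximately $\frac12 r_k^{1-\gamma}$ (up to the same $\pm O(1)$ rounding in both), and is divided by exactly that normalization, so each sum equals $1$ (or, more carefully, $1 + O(r_k^{\gamma-1})$ with the error the same in both terms, hence cancelling — one should be slightly careful to use the same convention for the integer endpoints in $X_{r_{k+1}}$ and $X_{r_{k,-}}$, or absorb the discrepancy into the definition so that the two sums are literally equal). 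The difference is therefore $0$, giving the claim.

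The only genuine subtlety is bookkeeping about the ranges of summation over $r$: one must confirm that in both $X_{r_{k+1}}$ and $X_{r_{k,-}}$ the number of integer radii summed, divided by the stated normalization $(\frac12 r_k^{1-\gamma})^{-1}$, produces the \emph{same} value, so their difference is exactly zero rather than merely small. This matches the corresponding argument (Lemma~3.1) in \cite{BW}, and requires no real estimation — just the observation that harmonic measure reproduces the value at the center and that the two radial weight vectors have equal total mass. There is no analytic obstacle here; the content of the lemma is entirely in the algebraic identity $\sum_r (\text{weight}) = \sum_r (\text{weight})$ across the two scales, which is built into the definitions.
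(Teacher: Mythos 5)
Your proof is correct and follows essentially the same route as the paper: the key step in both is the mean value property $\sum_{y\in\partial B_r(v)} a_{B_r(v)}(v,y)\,h(y) = h(v)$, which collapses each circle average of $h$ to the constant $h(v)$, after which the statement reduces to the cancellation of the two normalized radial weight vectors. The paper derives the mean value identity via the composition law for harmonic measures rather than by invoking the random-walk exit distribution, but these are the same fact; your additional caveat about the integer-range bookkeeping being arranged so the two total masses are literally equal is a reasonable flag that the paper's own proof also leaves implicit.
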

\begin{proof}
Denote $B_{r_k}$ by $D$.  Suppose $h$ takes the boundary value $h|_{\partial D}=H$.
We conclude the proof by showing for any $r \geq 1$%
\begin{equation*}
 \sum_{y\in \partial B_{r}\left(
v\right) }a_{B_{r}\left( v\right) }\left( v,y\right) h\left( y\right)
=h\left( v\right) .
\end{equation*}%
Indeed, since $h$ is harmonic,%
\begin{equation*}
h\left( y\right) =\sum_{z\in \partial D}a_{D}\left( y,z\right)
H\left( z\right) .
\end{equation*}%
Using the fact that 
\begin{equation*}
\sum_{y\in \partial B_{r}\left( v\right) }a_{B_{r}\left( v\right) }\left(
v,y\right) a_{D}\left( y,z\right) =a_{D}\left( v,z\right) ,
\end{equation*}%
we obtain%
\begin{equation*}
\sum_{y\in \partial B_{r}\left(
v\right) }a_{B_{r}\left( v\right) }\left( v,y\right) h\left( y\right)
= \sum_{z\in \partial
D}a_{D}\left( v,z\right) H\left( z\right) =h\left( v\right) .
\end{equation*}
\end{proof}

The following lemma is a consequence of Theorem \ref{t.decouple} and the
lemma above.

\begin{lemma}
\label{l.average}Suppose the same conditions in Theorem \ref{t.decouple} holds.
Let $%
\delta $ be the constant from Theorem \ref{t.decouple}. Let $\phi ^{f}$ be
sampled from gradient field (\ref{e.GLD}) on $B_{r_k}$ with boundary condition $f$,  and $\phi ^{0}$ be sampled
from the zero boundary gradient field on $B_{r_k}$. Then, on an event
with probability $1-O\left( r_{k}^{-\delta }\right) $, we have 
\begin{equation*}
A_k\left( v,\phi ^{f}\right)
=A_k\left( v,\phi ^{0}\right) .
\end{equation*}
\end{lemma}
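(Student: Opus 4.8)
The plan is to combine the harmonic-test property of $\rho_{r_k}$ (Lemma \ref{lem:harmtest}) with the coupling from Theorem \ref{t.decouple}. First I would recall that Theorem \ref{t.decouple} produces a coupling $(\phi^0, \phi^f)$ on $B_{r_k}$ (playing the role of $D$, of diameter $R \sim r_k$) such that, with probability at least $1 - cR^{-\delta'}$, one has $\phi^f = \phi^0 + \hat\phi$ throughout $D^r$, where $\hat\phi$ is the discrete harmonic extension to $D^r$ of the boundary data $(\phi^f - \phi^0)|_{\partial D^r}$, and $r$ may be taken to be any value exceeding $cR^\gamma$. The key geometric observation is that $A_k(v,\cdot) = \sum_{y \in B_{r_k}(v)} \rho_{r_k}(v,y)\phi(y)$ is supported on circles of radius at most $(1+\tfrac14 r_{k+1}^{-\gamma})r_{k+1} < r_k$ (recall $r_{k+1} = e^{-1} r_k$), so by choosing $r$ appropriately—say $r$ a small constant multiple of $r_k$, which still satisfies $r > cR^\gamma$ for $R$ large since $\gamma < 1$—the support of $\rho_{r_k}(v,\cdot)$ lies inside $D^r = B_{r_k}^r$. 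Hence on the coupling event we may substitute $\phi^f = \phi^0 + \hat\phi$ inside the sum defining $A_k(v,\phi^f)$.

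The second step is to discard the harmonic contribution. Since $\hat\phi$ is discrete harmonic in $D^r = B_{r_k}^r$ and the support of $\rho_{r_k}(v,\cdot)$ is contained in $D^r$, I would apply Lemma \ref{lem:harmtest}: the function $\hat\phi$ restricted to $D^r$ agrees with a discrete harmonic function on $B_{r_k}^r$, and $\sum_{y} \rho_{r_k}(v,y)\hat\phi(y) = 0$ because $\rho_{r_k}$ annihilates harmonic functions. Strictly, Lemma \ref{lem:harmtest} is stated for functions harmonic in all of $B_{r_k}$, so a small adjustment is needed: either re-run the proof of Lemma \ref{lem:harmtest} with $B_{r_k}^r$ in place of $B_{r_k}$ (the identity $\sum_{y\in\partial B_r(v)} a_{B_r(v)}(v,y) h(y) = h(v)$ used there only requires $h$ harmonic on a neighborhood of $\overline{B_r(v)}$, which holds for each radius $r$ appearing in $\rho_{r_k}$ provided $\overline{B_r(v)} \subset B_{r_k}^{r'}$), or observe that all relevant circles $\partial B_r(v)$ lie well inside $D^r$. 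Therefore $A_k(v,\phi^f) = \sum_y \rho_{r_k}(v,y)(\phi^0(y) + \hat\phi(y)) = \sum_y \rho_{r_k}(v,y)\phi^0(y) = A_k(v,\phi^0)$ on the coupling event.

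Finally, the coupling event has probability $1 - cR^{-\delta'} = 1 - O(r_k^{-\delta'})$, and matching $\delta'$ with the constant $\delta$ of Theorem \ref{t.decouple} (as stated in the lemma) gives the claimed bound $1 - O(r_k^{-\delta})$. The main obstacle I anticipate is the bookkeeping around the relationship between the radii $r$ (the decoupling depth, which must exceed $cR^\gamma$) and the support of $\rho_{r_k}$: one must verify that the thin annulus of radii near $r_{k+1}$ and near $r_{k,-}$ on which $\rho_{r_k}$ is supported—together with the harmonic measures $a_{B_r(v)}(v,\cdot)$, which are supported on $\partial B_r(v)$ with $r$ up to roughly $r_k$—all sit inside $B_{r_k}^r$ for an admissible choice of $r$. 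Since $r_{k+1}/r_k = e^{-1}$ is bounded away from $1$, there is a definite gap of order $r_k$ between the outermost support circle and $\partial B_{r_k}$, so taking $r$ to be a suitably small fixed fraction of $r_k$ works; this is exactly the point where the assumption $\gamma < 1$ (hence $cR^\gamma = o(r_k)$) is used. This is essentially the argument of \cite[Lemma 3.2]{BW}, adapted to the thinner averaging window.
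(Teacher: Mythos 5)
Your overall strategy is the right one: combine the harmonic-killing property of $\rho_{r_k}$ (Lemma~\ref{lem:harmtest}) with the coupling from Theorem~\ref{t.decouple}. The observation that Lemma~\ref{lem:harmtest} really only needs $\hat\phi$ harmonic on each ball $B_r(v)$ appearing in the sum (not on all of $B_{r_k}$) is also a correct and necessary refinement.

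However, there is a genuine geometric error that would break the argument as written. You claim that the support of $\rho_{r_k}(v,\cdot)$ is contained in circles of radius at most $(1+\tfrac14 r_{k+1}^{-\gamma})r_{k+1}\approx e^{-1}r_k$, leaving ``a definite gap of order $r_k$'' to $\partial B_{r_k}$. This ignores the second term in the definition of $A_k=X_{r_{k+1}}-X_{r_{k,-}}$ (equivalently, the second indicator in \eqref{e.rho}). The outer annulus of $\rho_{r_k}$ lives at radii near $r_{k,-}=(1-r_k^{-\gamma})r_k$, so the outermost circle is at radius
\begin{equation*}
\bigl(1+\tfrac14 r_k^{-\gamma}\bigr)r_{k,-}\;=\;r_k-\tfrac34 r_k^{1-\gamma}+O\bigl(r_k^{1-2\gamma}\bigr),
\end{equation*}
and the gap to $\partial B_{r_k}$ is only of order $r_k^{1-\gamma}$, a vanishing fraction of $r_k$. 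Consequently, taking the decoupling depth $r$ to be a small fixed fraction of $r_k$ does \emph{not} place $\supp\rho_{r_k}(v,\cdot)$ inside $D^r$; for large $r_k$ the outer annulus would fall outside $D^r$. The correct choice is $r\asymp r_k^\gamma$ (the minimum allowed by Theorem~\ref{t.decouple}), and for the relevant balls $B_r(v)$ to fit inside $D^r$ one needs $r_k^\gamma\ll r_k^{1-\gamma}$, i.e.\ the constraint is $\gamma<\tfrac12$ rather than merely $\gamma<1$. This is precisely why the paper uses the thinner averaging windows of width $\sim r_k^{1-\gamma}$ (as noted after \eqref{e.rho}, in contrast with \cite{BW}): they are calibrated so that the decoupling depth $r_k^\gamma$ and the distance of $\supp\rho_{r_k}$ from $\partial B_{r_k}$ are compatible. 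With this correction, the remainder of your argument (substituting $\phi^f=\phi^0+\hat\phi$ on the coupling event and applying Lemma~\ref{lem:harmtest} to annihilate $\hat\phi$) goes through.
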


\subsection{Proof of Lemma \ref{l.small} and Lemma \ref{l.medium}}
\label{s.smallmedium}
A key ingredient for the characteristic function estimates (Lemma \ref{l.small} and Lemma \ref{l.medium}) is the decoupling estimates Proposition \ref{ind} below.  

For $r>0$ denote by $\mathbb{P}^{r,0}$ the law of the Ginzburg-Landau field
in $B_{r}\left( v\right) $ with zero boundary condition (and denote by $%
\mathbb{E}^{r,0}$ the corresponding expectation). The basic building block
of Lemma \ref{l.small} and \ref{l.medium} is the following.

\begin{proposition}
\label{ind}There exists $\delta= \delta (V) >0$ and $C=C(d,\lambda, \Lambda) <\infty$, such that for all $|s|^2<\min\{\frac{\delta(\log N -k-1)}{Ck}, \frac 12\}$ and $k \le \log N - \sqrt {\log N}$,%
\begin{equation}
\label{e.ind}
\log \left\langle \exp \left( isX_{r_{k}}\right) \right\rangle_{\mu_N}
=\sum_{j=1}^{k}\log \mathbb{E}^{r_{j},0}\left[ \exp(is A_j)\right]   +\log \left\langle\exp \left( isX_{r_{0}}\right)  \right\rangle_{\mu_N} +O\left( \sum_{j=1}^{k} e^{Cs^2k}r_{j}^{-\delta
}\right) . 
\end{equation}
\end{proposition}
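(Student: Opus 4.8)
The plan is to prove \eqref{e.ind} by induction on $k$, peeling off one scale at a time. At each step we decompose the harmonic average at scale $r_k$ as $X_{r_k} = X_{r_{k-}} + \mathcal{E}_k$ and $A_k = X_{r_{k+1}} - X_{r_{k-}}$, so that the passage from scale $r_{k-1}$ to scale $r_k$ is mediated by the increment $A_{k}$, which by \eqref{e.rho} is of the form $\sum_{y} \rho_{r_k}(v,y)\phi(y)$ with $\rho_{r_k}$ supported on $B_{r_k}(v)$ and, by Lemma \ref{lem:harmtest}, orthogonal to all discrete harmonic functions on $B_{r_k}$. The key point is that, conditionally on the field outside $B_{r_k}$, Theorem \ref{t.decouple} (in the form of Lemma \ref{l.average}) says that on an event of probability $1 - O(r_k^{-\delta})$ the increment $A_k$ computed from $\phi$ equals the increment computed from the \emph{zero-boundary} field $\phi^0$ on $B_{r_k}$, since replacing the boundary data by its harmonic extension does not change $A_k$ (again Lemma \ref{lem:harmtest}). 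This is the mechanism that turns the increments into (approximately) independent pieces.

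First I would set up the one-step estimate: writing $\langle e^{isX_{r_k}}\rangle_{\mu_N} = \langle e^{is X_{r_{k-}}} e^{is\mathcal{E}_k} \rangle_{\mu_N}$, and more usefully $\langle e^{isX_{r_{k+1}}} \rangle = \langle e^{is X_{r_{k-}}} e^{is A_k}\rangle$, I would condition on $\mathcal{F}_k$, the $\sigma$-algebra generated by $\phi$ outside $B_{r_k}(v)$. Then $X_{r_{k-}}$ (a harmonic average of $\phi$ over circles of radius $\approx r_{k-} = (1-r_k^{-\gamma})r_k < r_k$) is \emph{not} $\mathcal{F}_k$-measurable, but the conditional law of $\phi$ inside $B_{r_k}$ given $\mathcal{F}_k$ is $\mu^{B_{r_k}}_f$ with $f = \phi|_{\partial B_{r_k}}$, and on the good event $\mathcal{M}$ we have $|f| \le (\log r_k)^2$ so Lemma \ref{l.average} applies. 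Hence $\langle e^{isA_k} \,|\, \mathcal{F}_k\rangle = \mathbb{E}^{r_k,0}[e^{isA_k}] + O(r_k^{-\delta})$ on $\mathcal{M}$, where on the right $A_k$ is the increment of the zero-boundary field. To control the error I would use Lemma \ref{l.jasonerr} with $\rho = \rho_{r_k}$, which bounds the difference of the two exponential moments by $2\exp(C\var_{G,B_{r_k}}(\sum \rho_{r_k}\phi)) r_k^{-\delta}$; the Gaussian variance of the increment $A_k$ is bounded by an absolute constant (it is a thin harmonic average over annuli, and $\var_{G}$ of circle averages of the GFF over comparable radii is $O(1)$, with the thinness of the averaging window only helping), so this error is $O(r_k^{-\delta})$ uniformly. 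Combined with the exponential Brascamp–Lieb bound (Proposition \ref{p.BL}) controlling $|\langle e^{isX_{r_{k-}}}\rangle|$ and $|\langle e^{is(X_{r_{k-}} + A_k)}\rangle|$ by $e^{Cs^2 k}$ (since $\var_{\mu_N}(X_{r_{k-}})$ and $\var_{\mu_N}(X_{r_{k+1}})$ grow like $O(k)$, by the Brascamp–Lieb comparison to the GFF whose circle-average variance is $\sim \log(N/r_k) \asymp k$), multiplying through by $\langle e^{isX_{r_{k-}}}\rangle$ and taking logarithms gives
\begin{equation*}
\log\langle e^{isX_{r_{k+1}}}\rangle_{\mu_N} = \log \mathbb{E}^{r_k,0}[e^{isA_k}] + \log\langle e^{isX_{r_{k-}}}\rangle_{\mu_N} + O(e^{Cs^2k} r_k^{-\delta}),
\end{equation*}
after also absorbing the boundary-layer discrepancy between $X_{r_k}$ and $X_{r_{k-}}$ (which is $\mathcal{E}_k$, again a thin harmonic average orthogonal to harmonic functions, handled by the same Lemma \ref{l.jasonerr} argument). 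Iterating this identity from $j=k$ down to $j=1$ and summing the errors yields \eqref{e.ind}, with the geometric-type sum $\sum_{j=1}^k e^{Cs^2 k} r_j^{-\delta}$ as the cumulative error.

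The hard part will be making the conditioning genuinely rigorous and uniform: (a) ensuring that the good event $\mathcal{M}$ on which Lemma \ref{l.average} and Lemma \ref{l.jasonerr} apply has probability $1 - O(\exp(-c_1(\log r_k)^3))$ (Lemma \ref{l.bad}), so its complement contributes a negligible error even after multiplying by the $e^{Cs^2k}$ factor, which forces the restriction $k \le \log N - \sqrt{\log N}$ and $|s|^2 < \delta(\log N - k - 1)/(Ck)$ so that $e^{Cs^2 k} r_j^{-\delta}$ stays summable and small; (b) verifying that the independence is not merely conditional but actually produces the clean product $\prod_j \mathbb{E}^{r_j,0}[e^{isA_j}]$ — here the point is that each $\mathbb{E}^{r_j,0}[e^{isA_j}]$ is a deterministic number (the zero-boundary law on $B_{r_j}$ does not see $\mathcal{F}_j$), so after the replacement the conditional expectation factors out as a constant and the recursion closes; and (c) controlling the variance growth $\var_{\mu_N}(X_{r_k}) = O(k)$ with the right constant via Brascamp–Lieb, to justify the exponential bound $e^{Cs^2 k}$ and to make sure the constant $C$ in the proposition is indeed $C(d,\lambda,\Lambda)$. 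Once these bookkeeping points are handled, the induction is mechanical.
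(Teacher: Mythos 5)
Your overall architecture matches the paper's proof: peel off one scale at a time by conditioning on the field outside $B_{r_k}$, replace the conditional law by the zero-boundary law on $B_{r_k}$ via Lemma~\ref{l.average}/Lemma~\ref{l.jasonerr}, control the boundary-layer term $\mathcal E_k$ by Brascamp--Lieb, and accumulate the $O(r_j^{-\delta})$ errors over scales. That is exactly the one-step identity the paper establishes as~\eqref{e.factor}, and your decomposition into $X_{r_{k-}}$, $A_k$, $\mathcal E_k$ and the use of Lemma~\ref{lem:harmtest} are the right ingredients.

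There is, however, a genuine gap in how you handle the step from the \emph{additive} one-step identity to the \emph{logarithmic} telescoping~\eqref{e.ind}. Writing
$\left\langle e^{isX_{r_{k}}}\right\rangle_{\mu_N} = \left\langle e^{isX_{r_{k-1}}}\right\rangle_{\mu_N}\,\E^{r_{k-1},0}\!\left[e^{isA_{k-1}}\right] + \mathrm{err}_{k-1}$
and taking logarithms requires a \emph{lower} bound on $\left|\left\langle e^{isX_{r_{k-1}}}\right\rangle_{\mu_N}\right|$ to absorb $\mathrm{err}_{k-1}/\left\langle e^{isX_{r_{k-1}}}\right\rangle_{\mu_N}$ into the $\log(1+\cdot)$, and this is precisely what produces the factor $e^{Cs^2 k}$ in the error in~\eqref{e.ind}. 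You attribute this to ``the exponential Brascamp--Lieb bound controlling $|\langle e^{isX_{r_{k-}}}\rangle|$ by $e^{Cs^2k}$,'' but Proposition~\ref{p.BL} only bounds \emph{variances} (and real exponential moments), and variance control alone does not give a lower bound on the modulus of a characteristic function --- a symmetric log-concave variable of variance $\sigma^2$ need not satisfy $|\hat p(s)|\ge e^{-C s^2\sigma^2}$ (the two-sided exponential gives $|\hat p(s)|=(1+s^2)^{-1}$). Moreover, a direct Taylor lower bound $\langle e^{isX_{r_k}}\rangle \ge 1-\tfrac{s^2}{2}\var X_{r_k}$ is useless here because $\var_{\mu_N} X_{r_k}\asymp k$ and the proposition's range allows $s^2 k$ up to order $\sqrt{\log N}$. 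The paper resolves this by running a \emph{joint} induction with the extra inductive hypothesis~\eqref{e.lower}, $\langle e^{isX_{r_k}}\rangle_{\mu_N}\ge e^{-Cs^2 k}$, which is propagated multiplicatively: each zero-boundary factor $\E^{r_{j},0}[e^{isA_j}]$ has $O(1)$ variance, so Taylor plus Brascamp--Lieb give $\E^{r_{j},0}[e^{isA_j}]\ge e^{-C's^2}$, and the product with the inductive lower bound at scale $k-1$ (plus the hypothesis $s^2<\delta(\log N -k-1)/(Ck)$ ensuring the error $r_{k-1}^{-\delta}$ is subdominant) closes the loop. Without this second inductive hypothesis --- or an equivalent product-form lower bound built from the $O(1)$-variance increments --- the logarithm cannot be taken and the stated error form $O\!\left(\sum_j e^{Cs^2k}r_j^{-\delta}\right)$ is unjustified.
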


\begin{proof}
We gave an inductive proof,  by running an induction jointly with \eqref{e.ind} and
\begin{equation}
\label{e.lower}
\left\langle \exp \left( isX_{r_{k}}\right) \right\rangle_{\mu_N} \geq
e^{-Cs^2 k}
\end{equation}
for some $C=C(d,\lambda, \Lambda) <\infty$. 
 For the base case $k=0$,  \eqref{e.ind} trivially holds.  To see \eqref{e.lower} holds for $k=0$,  we Taylor expand the exponential and apply the Brascamp-Lieb inequality to obtain 
 \begin{equation*}
 \left\langle \exp \left( isX_{r_{0}}\right) \right\rangle_{\mu_N}
 \geq	
 1- \frac{s^2}{2} \var_{\mu_N} X_{r_{0}}
 \geq 
 1- Cs^2\var_{\mu_{G,Q_N}} X_{r_{0}}
 \geq 
 e^{-C's^2 }
 \end{equation*}
 To keep iterating, we denote by 
$\mathcal M_k := \left\{ \max_{x\in B_{r_{k-1}} } |\phi(x)| \le (\log r_{k-1})^2 \right\}$. By Lemma \ref{l.bad2}, $\P^{N,0} [\mathcal M_k^c] \le N^{-c_1 }$. 
On the event $\mathcal M_k$ , we may write
\begin{multline}
\label{e.cond}
\left\langle \exp \left( isX_{r_{k}}\right)\indc_{\mathcal M_k} \right\rangle_{\mu_N}
=\left\langle\exp \left( isX_{r_{k-1}}\right) \E \left[  \exp \left( isA_{k-1}\right)\exp(is \mathcal E_{k-1})| \mathcal F_{k-1}\right]\indc_{\mathcal M_k}  \right\rangle_{\mu_N}\\
= \left\langle \exp \left( isX_{r_{k-1}}\right) \E \left[  \exp \left( isA_{k-1}\right)| \mathcal F_{k-1}\right]\indc_{\mathcal M_k}  \right\rangle_{\mu_N}\\
+ \left\langle \exp \left( isX_{r_{k-1}}\right) \exp \left( isA_{k-1}\right)\left(\exp(is \mathcal E_{k-1})-1\right)\indc_{\mathcal M_k}  \right\rangle_{\mu_N}
\end{multline}

We conclude using Lemma \ref{l.jasonerr} that 
\begin{equation*}
\E \left[  \exp \left( isA_{k-1}\right)| \mathcal F_{k-1}\right] = \E^{r_{k-1},0} \left[  \exp \left( isA_{k-1}\right)\right] + R_{k-1}
\end{equation*}
such that with probability $1-\P^{N,0} [\mathcal M_k^c]$ ,  there is some $C<\infty$ and $\delta_1>0$ such that $|R_{k-1}| \le 2\exp\left(C_1 \var_{G, B_{r_{k-1}}} A_{k-1}  \right) r_{k-1}^{-\delta_1}\leq Cr_{k-1}^{-\delta_1}$,  and on an event with probability $ \P^{N,0} [\mathcal M_k^c]$, $|R_{k-1}| $ is bounded by $1$.  This implies, in particular, $\left\langle|R_{k-1}|^2\right\rangle_{\mu_N} \leq  Cr_{k-1}^{-\delta_1}$.  For the last term in \eqref{e.cond},  applying the Brascamp-Lieb inequality yields
\begin{equation*}
\left\langle \left|  \exp(is \mathcal E_{k-1})-1 \right|\right\rangle_{\mu_N}\leq
|s| \left\langle \left|  \mathcal E_{k-1}\right| \right\rangle_{\mu_N}
\leq \left\langle \left|  \mathcal E_{k-1}^2\right| ^\frac 12\right\rangle_{\mu_N} \leq
C r_{k-1}^{-\gamma/2}. 
\end{equation*}
The two estimates above implies that for every $k\in \N$,  we have 
\begin{equation}
\label{e.factor}
\left\langle\exp \left( isX_{r_{k}}\right)\indc_{\mathcal M_k} \right\rangle_{\mu_N}
= 
\left\langle \exp \left( isX_{r_{k-1}}\right) \right\rangle_{\mu_N}
 \E^{r_{k-1},0} \left[  \exp \left( isA_{k-1}\right)\right] 
 + O(r_{k-1}^{-\delta_1/2} )+ O( r_{k-1}^{-\gamma/2}). 
 \end{equation}
 Denote $ \min\{\delta_1/2, \gamma/2\}$ as $\delta$.  We now apply the induction hypothesis and the fact that 
 \begin{equation*}
 \E^{r_{k-1},0} \left[  \exp \left( isA_{k-1}\right)\right]  
 \geq
 1- \frac{s^2}{2} \var_{\mu_{B_{r_{k-1}}}} A_{k-1}
 \geq 
 1- Cs^2\var_{\mu_{G,B_{r_{k-1}}}} A_{k-1}
 \geq 
 e^{-C's^2 }
 \end{equation*}
 to obtain  for some $C<\infty$,
 \begin{equation*}
 \left\langle\exp \left( isX_{r_{k}}\right)\indc_{\mathcal M_k} \right\rangle_{\mu_N}
 \geq 
 e^{-Cs^2(k+1)} +O(r_{k-1}^{-\delta})
  \end{equation*}
  therefore \eqref{e.lower} follows from the fact that $e^{-Cs^2(k+1)} > r_{k-1}^{-\delta/2}$, which is implied by the condition $|s|^2<\min\{\frac{\delta(\log N -k-1)}{Ck}, \frac 12\}$, and $\P^{N,0} [\mathcal M_k^c] \le N^{-c_1 }$. Finally,  by taking the logarithm of \eqref{e.factor} we obtain \eqref{e.ind} for the case $k+1$. 
\end{proof}

\begin{lemma}
\label{l.bad2}
There exists $c_1 >0$,  such that for all  $k \le \log N - \sqrt {\log N}$,  $\P^{N,0} [\mathcal M_k^c] \le N^{-c_1 }$. 
\end{lemma}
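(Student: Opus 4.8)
The plan is a direct concentration estimate: bound the fluctuations of $\phi(x)$ at a single vertex $x\in B_{r_{k-1}}$ via the Brascamp--Lieb inequality, and then take a union bound over the vertices of $B_{r_{k-1}}$. The only point that requires care is that the union bound survives, and this is exactly what the hypothesis $k\le \log N-\sqrt{\log N}$ buys us.

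\emph{Step 1: a sub-Gaussian tail for $\phi(x)$.} I would fix $x\in B_{r_{k-1}}$; if $x\in\partial Q_N$ then $\phi(x)=0$ deterministically and there is nothing to check, so assume $x\in Q_N^\circ$. Applying the exponential Brascamp--Lieb inequality \eqref{e.BL.linexp} with $f=\pm\omega_x$ gives, for every $t\in\R$,
\[
\log\big\langle e^{t\phi(x)}\big\rangle_{\mu_N}\ \le\ \frac{t^2}{2\lambda}\,G_{Q_N}(x,x)\ \le\ \frac{C_0}{2\lambda}\,t^2\log N ,
\]
where in the last step I use $G_{Q_N}(x,x)\le C_0\log N$, the standard bound for the diagonal of the discrete Green function in a two-dimensional box of side $2N$. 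Optimising a Chernoff bound over $t$ then yields
\[
\P^{N,0}\big(|\phi(x)|>a\big)\ \le\ 2\exp\!\Big(-\tfrac{\lambda a^2}{2C_0\log N}\Big)\qquad(a>0).
\]

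\emph{Step 2: inserting the hypothesis on $k$ and the union bound.} Take $a=(\log r_{k-1})^2$. Since $r_{k-1}=e^{1-k}N$, the assumption $k\le \log N-\sqrt{\log N}$ gives $\log r_{k-1}=\log N-(k-1)\ge \sqrt{\log N}$, hence $(\log r_{k-1})^2\ge\log N$ and therefore $a^2/\log N=(\log r_{k-1})^4/\log N\ge (\log r_{k-1})^2$. Combined with Step 1,
\[
\P^{N,0}\big(|\phi(x)|>(\log r_{k-1})^2\big)\ \le\ 2\exp\!\Big(-\tfrac{\lambda}{2C_0}(\log r_{k-1})^2\Big).
\]
A union bound over the at most $C_1 r_{k-1}^2=C_1 e^{2\log r_{k-1}}$ vertices of $B_{r_{k-1}}$ then gives
\[
\P^{N,0}(\mathcal{M}_k^c)\ \le\ 2C_1\exp\!\Big(2\log r_{k-1}-\tfrac{\lambda}{2C_0}(\log r_{k-1})^2\Big).
\]
Because $\log r_{k-1}\ge\sqrt{\log N}\to\infty$, the quadratic term dominates the linear one for $N$ large, so the exponent is at most $-\tfrac{\lambda}{4C_0}(\log r_{k-1})^2\le -\tfrac{\lambda}{4C_0}\log N$; absorbing the prefactor $2C_1$ into the exponent for $N$ large yields $\P^{N,0}(\mathcal{M}_k^c)\le N^{-c_1}$ with, say, $c_1=\lambda/(8C_0)$, and the estimate is uniform over all $k\le \log N-\sqrt{\log N}$.

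The main (indeed the only) difficulty is the balance appearing in the union bound of Step 2: the field on $Q_N$ has fluctuations of order $\sqrt{\log N}$ \emph{everywhere} in the bulk --- the variance of $\phi(x)$ does not shrink as $x$ ranges over the small ball $B_{r_{k-1}}$ --- so the deviation threshold $(\log r_{k-1})^2$ must be at least of order $\log N$, i.e.\ one needs $\log r_{k-1}\gtrsim\sqrt{\log N}$, for the sub-Gaussian decay to beat the roughly $r_{k-1}^2$ terms in the union bound. This is precisely the content of the restriction $k\le \log N-\sqrt{\log N}$, and is the reason the lemma --- and hence Proposition~\ref{ind} --- is stated only for that range of scales.
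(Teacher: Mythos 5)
Your proof is correct and takes essentially the same route as the paper's: apply the exponential Brascamp--Lieb inequality \eqref{e.BL.linexp} with $f=\omega_x$ to get a sub-Gaussian tail of variance proxy $O(\log N)$, insert the threshold $(\log r_{k-1})^2$, union-bound over the ball $B_{r_{k-1}}$, and use $\log r_{k-1}\ge\sqrt{\log N}$ to make the exponent win over the polynomial prefactor. The only cosmetic difference is that you first write out the Chernoff-optimized sub-Gaussian tail (with the explicit Green-function bound $G_{Q_N}(x,x)\le C_0\log N$) before plugging in the threshold, whereas the paper inserts the threshold first and then optimizes over $t$.
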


\begin{proof}
By making a union bound and applying the exponential Brascamp-Lieb \eqref{e.BL.linexp} and the Chebyshev inequality,  we have for all $t\in\R$, 
\begin{equation*}
\P^{N,0} [\mathcal M_k^c] \le
\sum_{x\in B_{r_k}} \P^{N,0} [\phi(x) > (\log r_k)^2]
\leq
r_k^2 e^{-t  (\log r_k)^2}  e^{C t^2 \log N}.
\end{equation*}
Optimize over $t$,  and use the fact that $\log r_k \geq (\log N)^\frac 12$,  we see that there exists $c_1>0$,  such that for $N$ sufficiently large,
\begin{equation*}
\P^{N,0} [\mathcal M_k^c] \le
r_k^2 e^{-\frac{2c_1  (\log r_k)^4}{\log N}}
\leq
 N^{-c_1 }
 \end{equation*}
\end{proof}

We also need an algebraic rate convergence of the variance of $A_k$ stated below. 

\begin{lemma}
\label{l.homogvar}
There exists $ \g=\g(V) >0$ and $\beta = \beta(d,\lambda, \Lambda) >0$,  such that for all $j= 1, \cdots \log N$, 
\begin{equation*}
\E^{r_{j},0}\left[ A_j^2\right] = \g + O(r_j^{-\beta}).
\end{equation*}
\end{lemma}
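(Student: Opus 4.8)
The plan is to recognize $A_j$ as a linear statistic of $\nabla\phi$ on the ball $B_{r_j}$, to homogenize the associated Helffer--Sj\"ostrand equation using the machinery of Section~\ref{s.CLT}, and then to read off the limiting variance from a constant-coefficient (Gaussian) computation; a scale-invariance observation will force that limit to be the same for every $j$. First I would note that, by the symmetry assumption~(i), the measure $\mu_{B_{r_j}}$ is invariant under $\phi\mapsto-\phi$, so $\E^{r_j,0}[A_j]=0$ and only $\var_{\mu_{B_{r_j}}}[A_j]$ has to be estimated. Writing $A_j=\sum_{y\in B_{r_j}}\rho_{r_j}(0,y)\phi(y)$ and applying Lemma~\ref{lem:harmtest} to the constant function shows $\sum_y\rho_{r_j}(0,y)=0$; hence there is an edge field $f_{r_j}$ on $B_{r_j}$ with $\nabla^*f_{r_j}=\rho_{r_j}(0,\cdot)$ and $A_j=\sum_{e\in\mathcal E(B_{r_j})}f_{r_j}(e)\nabla\phi(e)$, the norms $\|f_{r_j}\|_{L^\infty},\|f_{r_j}\|_{L^2}$ being controlled by the standard harmonic-measure and Green-function estimates for a disk. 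In particular the Helffer--Sj\"ostrand representation \eqref{e.variational} applies and expresses $\var_{\mu_{B_{r_j}}}[A_j]$ via the solution of \eqref{e.HS} on $B_{r_j}$ with source $\rho_{r_j}(0,\cdot)$.

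I would then run the deterministic two-scale expansion of Section~\ref{s.CLT} on this equation, after rescaling by $r_j$ so that $B_{r_j}$ becomes the continuum unit disk $\mathbb D:=\{x\in\R^2:|x|<1\}$ with mesh $\eps:=r_j^{-1}$. Since Steps~1--4 in the proof of Theorem~\ref{t.HS} use only interior Green-function bounds and are insensitive to the shape of the domain, the same argument applies on $B_{r_j}$ and gives, with an $O(r_j^{-\beta})$ error, that $\var_{\mu_{B_{r_j}}}[A_j]$ equals the variance of $A_j$ under the Gaussian free field on $B_{r_j}$ with homogenized coefficient $\ahom$; writing $\bar G_{B_{r_j}}$ for the Green function of $-\nabla^*\cdot\ahom\nabla$ in $B_{r_j}$ with zero boundary data,
\begin{equation*}
\var_{\mu_{B_{r_j}}}[A_j]=\sum_{y,y'\in B_{r_j}}\rho_{r_j}(0,y)\,\rho_{r_j}(0,y')\,\bar G_{B_{r_j}}(y,y')+O(r_j^{-\beta}).
\end{equation*}

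It remains to evaluate the sum. The lattice Green function $\bar G_{B_{r_j}}(y,y')$ equals a positive constant times $\log r_j$, minus a logarithmic term in $y-y'$, plus a smooth regular part evaluated at $y/r_j$ and $y'/r_j$, plus $O(r_j^{-1})$; the $\log r_j$ term is multiplied by $\big(\sum_y\rho_{r_j}(0,y)\big)^2=0$ and therefore disappears, and the remaining terms converge. More precisely, the rescaled signed measures $r_j\rho_{r_j}(0,r_j\cdot)$ converge weakly to $\nu:=\sigma_{1/e}-\sigma_1$, where $\sigma_\rho$ is the uniform probability measure on the circle $\{|x|=\rho\}\subset\mathbb D$ (the $\sigma_1$ term being the limit of the $r_{j,-}=(1-r_j^{-\gamma})r_j$ annulus contribution, which lies at macroscopic distance $r_j^{-\gamma}\to0$ from $\partial\mathbb D$), and, absorbing the thin-annulus averaging and the boundary-layer shift into $O(r_j^{-\gamma})$ errors, one obtains
\begin{equation*}
\var_{\mu_{B_{r_j}}}[A_j]=\g+O\big(r_j^{-\min(\beta,\gamma)}\big),\qquad \g:=\iint_{\mathbb D\times\mathbb D}\bar G_{\mathbb D}(\xi,\eta)\,d\nu(\xi)\,d\nu(\eta),
\end{equation*}
$\bar G_{\mathbb D}$ being the continuum Green function of $-\nabla\cdot\ahom\nabla$ on $\mathbb D$. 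The constant $\g$ is manifestly independent of $j$ (the rescaled statistic is the same annular-average increment at radii $1/e$ and $1$ for all $j$); it equals $\|\sigma_{1/e}\|_{H^{-1}_{\ahom}(\mathbb D)}^2$ because the boundary measure $\sigma_1$ is annihilated by the Dirichlet Green operator, and it is strictly positive since $\sigma_{1/e}\neq0$, $\sigma_{1/e}\in H^{-1}(\mathbb D)$ (in dimension two an arc-length measure has finite $H^{-1}$ norm), and the Dirichlet Green operator of a uniformly elliptic divergence-form operator is strictly positive definite on $H^{-1}(\mathbb D)$. Relabelling $\min(\beta,\gamma)$ as $\beta$ yields the lemma.

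The step I expect to need the most care is the homogenization on this particular source: $r_j\rho_{r_j}(0,r_j\cdot)$ does not converge in $L^\infty$ to an $L^2$ function --- it concentrates on circles, its weak limit being a singular measure --- so Theorems~\ref{t.qclt} and~\ref{t.HS} cannot be quoted verbatim. What rescues the argument is that $\rho_{r_j}$ is, at the mesoscale $r_j^{1-\gamma}$ set by the annulus width, a slowly varying function, so the two-scale expansion --- whose error is local --- can be carried out and summed over triadic cubes of that size with total error $O(r_j^{-\beta'})$; and that $\rho_{r_j}$ has total mass zero, which is precisely what keeps the constant-coefficient sum convergent once the $\log r_j$ part of $\bar G_{B_{r_j}}$ has cancelled. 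The near-boundary ($r_{j,-}$) portion of $A_j$ --- a circle average at distance $r_j^{-\gamma}$ from $\partial B_{r_j}$, analogous to the terms $\mathcal E_k$ in the proof of Proposition~\ref{ind} --- has variance $O(r_j^{-\gamma})$ by the Brascamp--Lieb inequality, and so does not affect the limit.
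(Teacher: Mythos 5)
Your approach is the same as the paper's: express $A_j$ as a linear statistic of $\nabla\phi$ via an edge field $f_{r_j}$ with $\nabla^*\cdot f_{r_j}=\rho_{r_j}$, invoke the Helffer--Sj\"ostrand representation, and homogenize. The paper then simply cites \tref{qclt}, remarking only that $\sup_j\|r_j\rho_{r_j}\|_{L^\infty}<\infty$.

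You have in fact put your finger on a real imprecision in the paper's argument. The hypothesis of \tref{qclt} is that the rescaled source $\nabla^*\cdot f_R(\,\cdot/R)$ converges in $L^\infty$ to a fixed function $f\in L^2([0,1]^2)$; here the rescaled source is supported on two annuli of \emph{rescaled} width $r_j^{-\gamma}$ with height of order $r_j^{-(1-\gamma)}$, so it tends to zero in $L^\infty$ and its weak limit $\sigma_{1/e}-\sigma_1$ is a singular measure, not an $L^2$ function. A literal application of \tref{qclt} would therefore return $\g=0$. (The paper's claim that $r\rho_r$ converges with $\|r\rho_r-f\|_{L^\infty}=O(r^{-1})$ does not hold in the stated sense.) Your diagnosis --- that one must instead run the two-scale expansion at the mesoscale $r_j^{1-\gamma}$ set by the annulus width, exploiting that $\rho_{r_j}$ is slowly varying there and has total mass zero so the $\log r_j$ part of the Green function cancels --- is the right repair; it is only a sketch as written, but in spirit it is more correct than the paper's one-line invocation. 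Two genuine additions you make that the paper omits: an explicit identification of the limit $\g$ as the $H^{-1}_{\ahom}(\mathbb D)$-norm squared of the interior circle measure $\sigma_{1/e}$ (with the near-boundary $\sigma_1$ contribution dropped to $O(r_j^{-\gamma})$ by Brascamp--Lieb), and a clean argument for strict positivity of $\g$ from positive definiteness of the Dirichlet Green operator in dimension two. These are needed for the statement and are not supplied by \tref{qclt} as written.

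Two small points of care. First, the weak limit is better described as $\sigma_{1/e}-\sigma_{1-r_j^{-\gamma}}$ before passing to the limit; the outer circle sits at rescaled distance $r_j^{-\gamma}$ (not $r_j^{1-\gamma}$) from $\partial\mathbb D$, and its self-variance is $O(r_j^{-\gamma})$, which you absorb correctly into the error. Second, the claim that the two-scale expansion ``summed over triadic cubes of size $r_j^{1-\gamma}$'' has total error $O(r_j^{-\beta'})$ would need to be justified by observing that the homogenization error in \pref{convergence.muL} is algebraic in the cube size, hence of order $(r_j^{1-\gamma})^{-\beta}$, and that the number of such cubes meeting each annulus is not so large as to destroy this gain; as stated it is a plausible but incomplete step. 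Overall your proposal exposes and partially repairs a gap in the paper's proof rather than containing one of its own.
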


\begin{proof}
Recall from \eqref{e.ak} that,  we may write $A_j = \sum_{x \in B_{r_j}} \phi(x) \rho_{r_j}(x)$,  where $ \rho_{r_j}$ is defined in \eqref{e.rho}. 
Denote by $G_{r_j}(x,\cdot)$ the Dirichlet Green's function in $B_{r_j}$.  We may use the integration by parts $\phi(x) = \sum_{e \in \mathcal E(B_{r_j})} \nabla \phi(e) \nabla G_{r_j}(x,e)$ to write $A_j$ as  
\begin{equation*}
A_j = \sum_{e \in \mathcal E(B_{r_j})} \nabla \phi(e)  \sum_{x\in B_{r_j}} \nabla G_{r_j}(x,e) \rho_{r_j}(x)
\end{equation*}
In order to apply Theorem \ref{t.qclt},  define 
\begin{equation*}
f_{r_j}(e) := r_j \sum_{x\in B_{r_j}} \nabla G_{r_j}(x,e) \rho_{r_j}(x)
\end{equation*}
Thus $\nabla^*\cdot f_{r_j}(x) = r_j \rho_{r_j}(x)$ and $A_j =r_j^{-1} \sum_{e \in \mathcal E(B_{r_j})} \nabla \phi(e)  f_{r_j}(e)$. 
Notice that,  as $r\rightarrow \infty $, the rescaled
harmonic measure 
\begin{equation*}
ra_{B_{r}\left( 0\right) }\left( 0,\cdot \right) \rightarrow 1/2\pi .
\end{equation*}%
Thus as $r\rightarrow \infty $,  $r\rho_r$ converges to 
\begin{equation*}
f(x):= \frac1{2\pi r_{j+1}}\indc_{|x|= r_{j+1}} -  \frac1{2\pi r_{j,-}}\indc_{|x|= r_{j,-}}
 \end{equation*}
 and that $\|r\rho_r -f \|_{L^\infty(r^{-1} B_r)} = O(r^{-1})$.  

Applying Theorem \ref{t.qclt} using that $sup_{j \geq 1}\| r_j \rho_{r_j}\|_{L^\infty} <\infty$,  there exists  $ \g=\g(V) >0$ and $\beta= \beta(d,\lambda, \Lambda)>0$,  such that
\begin{equation*}
\left|\E^{r_{j},0}\left[ A_j^2\right] - \g\right|  \leq Cr_j^{-\beta}.
\end{equation*}
And we conclude the lemma.

\end{proof}

We are now ready to finish the proof of  Lemma \ref{l.small} and \ref{l.medium}. 

\begin{proof}[Proof of Lemma \ref{l.small} ]
Denote by  $s= \frac t{\sqrt{\log N}}$.  We apply Proposition \ref{ind} and stop at $k_2:= \log N - \sqrt{\log N}$.  It follows that 
\begin{multline}
\left\langle \exp(is \phi(0)) \right\rangle_{\mu_N}
= \left\langle \E\left[ \exp(is \phi(0) -isX_{r_{k_2, -}})| \mathcal F_{k_2}\right] \exp(is X_{r_{k_2}})      \exp(is \mathcal E_{k_2}) \right\rangle_{\mu_N}
\end{multline}
We apply Lemma \ref{l.jasonerr} and Lemma \ref{l.bad2} to conclude 
\begin{multline*}
\E\left[ \exp(is \phi(0) -isX_{r_{k_2, -}})| \mathcal F_{k_2}\right]  
= \E\left[ \exp(is \phi(0) -is X_{r_{k_2, -}})\indc_{\mathcal M_{k_2}} | \mathcal F_{k_2}\right]  \\
+ \E\left[ \exp(is \phi(0) -is X_{r_{k_2, -}})\indc_{\mathcal M_{k_2}^c} | \mathcal F_{k_2}\right]  
= \E^{r_{k_2},0} \left[  \exp(is \phi(0) -isX_{r_{k_2, -}}) \right] +R_{k_2},
\end{multline*}
where,  for some $\delta=\delta(V)\in (0,\frac12]$,  $\left\langle|R_{k_2}|^2\right\rangle_{\mu_N} \leq  r_{k_2}^{-\delta} \le e ^{-\delta \sqrt{\log N}}$.  
Thus
\begin{multline}
\label{e.medest}
\left\langle \exp(is \phi(0)) \right\rangle_{\mu_N}  = 
\E^{r_{k_2},0} \left[  \exp(is \phi(0) -isX_{r_{k_2, -}}) \right]  \left\langle \exp(is X_{r_{k_2}})\right\rangle_{\mu_N}
+ \left\langle R_{k_2}  \exp(is X_{r_{k_2}})\right\rangle_{\mu_N}\\
+  \left\langle (\exp(is \mathcal E_{k_2})  -1) \exp(is \phi(0) -isX_{r_{k_2, -}}) \exp(is X_{r_{k_2}}) \right\rangle_{\mu_N}
\end{multline}
The first term on the right side gives the main contribution.  We claim that there exists $\g=\g(V) >0$ and $\beta = \beta(V)>0$,  such that 
\begin{equation}
\label{e.k2}
\left\langle \exp(is X_{r_{k_2}})\right\rangle_{\mu_N}= 
e^{-\frac{t^2}{2\g}} \left(1+ O\left(\frac{t^2}{(\log N)^{\frac 12}}\right) \right)
 \end{equation}
 together with the Brascamp-Lieb inequality,  which implies 
 \begin{multline*}
 1 \ge \E^{r_{k_2},0} \left[  \exp(is \phi(0) -isX_{r_{k_2, -}}) \right] 
 \ge 
 1- \frac{s^2}{2} \var_{
 \mu_{B_{r_{k_2}}}}[\phi(0) -X_{r_{k_2, -}} ]
 \ge 
 1- \frac{C_1s^2}{2} \var_{\mu_{G,B_{r_{k_2}}}}[\phi(0) -X_{r_{k_2, -}} ] \\
 \ge 
 1- C_2 \frac{t^2 }{(\log N)^\frac 12}
 \end{multline*}
 Therefore 
\begin{equation}
\label{e.1st}
  \E^{r_{k_2},0} \left[  \exp(is \phi(0) -isX_{r_{k_2, -}}) \right]  \left\langle \exp(is X_{r_{k_2}})\right\rangle_{\mu_N}
  = e^{-\frac{t^2}{2\g}} \left(1+ O(\frac{t^2}{(\log N)^\frac 12})\right). 
  \end{equation}

To show \eqref{e.k2},  we apply Proposition \ref{ind} with $s= \frac t{\sqrt{\log N}}$ (thus $s^2 \ll \frac{\delta(\log N -k-1)}{Ck} = O((\log N)^{-1/2})$), which yields
\begin{eqnarray}
&&\log  \left\langle\exp \left( isX_{r_{k_2}}\right) \right\rangle_{\mu_N}
=\sum_{j=1}^{k_2}\log \mathbb{E}^{r_{j},0}\left[ \exp(is A_j)\right]   +\log  \left\langle\exp \left( isX_{r_{0}}\right) \right\rangle_{\mu_N}+O\left( \sum_{j=1}^{k_2} e^{Cs^2k}r_{j}^{-\delta
}\right) .  \notag
\end{eqnarray}
Note that $\sum_{j=1}^{k_2} e^{Cs^2k}r_{j}^{-\delta
} \leq Cr_{k_2}^{-\delta} \leq Ce ^{-\delta \sqrt{\log N}}$.  
We also apply the Brascamp-Lieb inequality to see that 
\begin{equation*}
\left| \mathbb{E}^{r_{j},0}\left[ \exp(is A_j)\right]   - 1 -\frac{s^2}{2} \E^{r_{j},0}\left[ A_j^2\right]\right|
\leq
Cs^4 \E^{r_{j},0}\left[ A_j^4\right]
\leq
\frac{Ct^4}{(\log N)^2}  \E^{G, B_{r_{j}}}\left[ A_j^4\right]
= O(\frac{t^4}{(\log N)^2})
\end{equation*}
Thus
\begin{equation*}
\sum_{j=1}^{k_2}\log \mathbb{E}^{r_{j},0}\left[ \exp(is A_j)\right]  
= \sum_{j=1}^{k_2} -\frac{s^2}{2} \E^{r_{j},0}\left[ A_j^2\right] + O(\frac {t^4} {\log N})
\end{equation*}
We apply Lemma \ref{l.homogvar} to conclude that there exists $ \g=\g(V) >0$ and $\beta= \beta(d,\lambda, \Lambda)>0$,  such that
\begin{equation*}
\E^{r_{j},0}\left[ A_j^2\right] = \g + O(r_j^{-\beta}), 
\end{equation*}
this yields
\begin{equation*}
\sum_{j=1}^{k_2} -\frac{s^2}{2} \E^{r_{j},0}\left[ A_j^2\right] 
 = -\frac{t^2}{2\g} \frac{\log N -(\log N)^\frac 12}{\log N} + O((e ^{-\beta \sqrt{\log N}}), 
\end{equation*}
together with the variance estimate that follows from the Brascamp-Lieb inequality, which gives $\log \left\langle \exp \left( isX_{r_{0}}\right) \right\rangle_{\mu_N} = O(\frac 1 {\log N})$,  we conclude \eqref{e.k2}. 

The other terms on the right side of \eqref{e.medest} can be estimated by 
\begin{equation*}
\left| \left\langle R_{k_2}  \exp(is X_{r_{k_2}})\right\rangle_{\mu_N}\right| \leq 
e ^{-\delta \sqrt{\log N}},
\end{equation*}
and
\begin{multline*}
\left\langle  (\exp(is \mathcal E_{k_2})  -1) \exp(is \phi(0) -isX_{r_{k_2, -}}) \exp(is X_{r_{k_2}}) \right\rangle_{\mu_N}
\leq
\frac{s^2}{2}\left\langle
\mathcal E_{k_2}^2\right\rangle_{\mu_N}\\
\leq
\frac{Ct^2}{2 \log N} \left\langle
\mathcal E_{k_2}^2\right\rangle_{\mu_N}
=  O(\frac 1 {\log N})
\end{multline*}
Substitutes the estimates above into \eqref{e.medest} we conclude Lemma \ref{l.small}.

\end{proof}

\begin{proof}[Proof of Lemma \ref{l.medium}]
The proof is very similar to Lemma \ref{l.small} and thus we give a sketch here.  We apply Proposition \ref{ind} and stop at $k_1:= \frac 12 \log N $.  By conditioning, 
\begin{multline}
\left\langle \exp(is \phi(0)) \right\rangle_{\mu_N}
=\left\langle \E\left[ \exp(is \phi(0) -isX_{r_{k_1, -}})| \mathcal F_{k_1}\right] \exp(is X_{r_{k_1}})      \exp(is \mathcal E_{k_1}) \right\rangle_{\mu_N}
\end{multline}
We then apply Lemma \ref{l.jasonerr} and Lemma \ref{l.bad2} to obtain 
\begin{multline}
\label{e.largeest}
\left\langle \exp(is \phi(0)) \right\rangle_{\mu_N}  = 
\E^{r_{k_1},0} \left[  \exp(is \phi(0) -isX_{r_{k_1, -}}) \right]  \left\langle \exp(is X_{r_{k_1}})\right\rangle_{\mu_N}
+ \left\langle R_{k_1}  \exp(is X_{r_{k_1}})\right\rangle_{\mu_N}\\
+  \left\langle  (\exp(is \mathcal E_{k_1})  -1) \exp(is \phi(0) -isX_{r_{k_1, -}}) \exp(is X_{r_{k_1}})\right\rangle_{\mu_N} ,
\end{multline}
where,  for some $\delta=\delta(V)\in (0,\frac12]$,  $\left\langle|R_{k_1}|^2\right\rangle_{\mu_N} \leq  r_{k_1}^{-\delta} \le N^{-\delta/2}$.  
Thus,  
\begin{equation*}
\left|  \left\langle R_{k_1}  \exp(is X_{r_{k_1}})\right\rangle_{\mu_N} \right| \leq 
N^{-\delta/2},
\end{equation*}
and by the Brascamp-Lieb inequality,
\begin{equation*}
\left\langle  (\exp(is \mathcal E_{k_1})  -1) \exp(is \phi(0) -isX_{r_{k_1, -}}) \exp(is X_{r_{k_1}}) \right\rangle_{\mu_N}
\leq
\frac{s^2}{2}\left\langle  
\mathcal E_{k_1}^2\right\rangle_{\mu_N}\\
=  O(N^{-\gamma})
\end{equation*}
Let $\delta>0$ and $C<\infty$ be the constants from Proposition \ref{ind},  and we take $\eps $ sufficiently small so that $\eps^2 \le \frac \delta {2C}$. 
For the first term on the right side of \eqref{e.largeest},  which is absolutely bounded by $\left| \left\langle \exp(is X_{r_{k_1}})\right\rangle_{\mu_N}\right|$,   we apply Proposition \ref{ind} (with $|s|< \eps$) to obtain 
\begin{eqnarray}
&&\log \left\langle \exp \left( isX_{r_{k_1}}\right)\right\rangle_{\mu_N}
=\sum_{j=1}^{k_1}\log \mathbb{E}^{r_{j},0}\left[ \exp(is A_j)\right]   +\log \left\langle \exp \left( isX_{r_{0}}\right)\right\rangle_{\mu_N} +O\left( \sum_{j=1}^{k_1} e^{Cs^2 k_1}r_{j}^{-\delta
}\right) .  \notag
\end{eqnarray}
Since $s^2 \le \frac \delta {2C}$,  we have
\begin{equation*}
 \sum_{j=1}^{k_1} e^{Cs^2 k_1}r_{j}^{-\delta
} \le
r_{k_1}^{-\delta/4}
\le 
N^{-\delta/8}.
\end{equation*}
Notice that, since the distribution of $A_j$ is symmetric for the zero boundary field,
\begin{equation*}
 \mathbb{E}^{r_{j},0}\left[ \exp(is A_j)\right]   \leq
  1 -\frac{s^2}{2} \E^{r_{j},0}\left[ A_j^2\right]
+
\frac{s^4}{24} \E^{r_{j},0}\left[ A_j^4\right]
\end{equation*}
We apply the Brascamp-Lieb inequality to conclude,  there exists an absolute constant $M<\infty$,  such that 
\begin{equation*}
\max_{j=1,\cdots, k_1} \E^{r_{j},0}\left[ A_j^4\right]
\leq
C\max_{j=1,\cdots, k_1} \E^{G, B_{r_{j}}}\left[ A_j^4\right]
\leq M
\end{equation*}
On the other hand,  applying Lemma \ref{l.homogvar} to conclude that there exists $ \g=\g(V) >0$ and $\beta=\beta(d,\lambda, \Lambda)>0$,  such that for each $j=1,\cdots, k_1$,
\begin{equation*}
\E^{r_{j},0}\left[ A_j^2\right] = \g + O(N^{-\beta}), 
\end{equation*}
Thus,  by choosing $\eps>0$ such that $\eps^2  = \min\{ \frac \g {2M}, \frac \delta {2C},  \frac{3\gamma}{2\g}, \frac{3\delta}{4\g} \}$,  we see that for all $|s|<\eps$,  
\begin{equation*}
 \mathbb{E}^{r_{j},0}\left[ \exp(is A_j)\right]   \leq
 1-  \frac{s^2}{3}\g 
 \end{equation*}
 Thus for $N$ sufficiently large, 
 \begin{equation*}
\left\langle \exp \left( isX_{r_{k_1}}\right) \right\rangle_{\mu_N}
 \leq
 (1+O(N^{-\frac \delta 8}) )\prod_{j=1}^{k_1} \left(1-  \frac{s^2}{4}\g \right) +O(N^{-\frac \delta 2}+ N^{-\gamma})
 \leq
 2  e^{-  \frac{s^2}{3}\g \log N}
  \end{equation*}
 
Substitutes these estimates into \eqref{e.largeest} we conclude the Lemma.

\end{proof}

\begin{remark}
Notice that if we aims for a weaker bound \eqref{e.mediumweak}, then instead of applying \ref{l.homogvar} in the proof above,  we only need a uniform lower bound, namely,  $\E^{r_{j},0}\left[ A_j^2\right]   \geq c_1$ for some $c_1>0$.  This can be proved,  for example,  by using the Mermin-Wagner argument as we did in the next section. 
\end{remark}

\section{ A Mermin-Wagner bound}
\label{s.MW}
 
 In this section we prove Lemma \ref{l.MW}. The upper bound \eqref{e. MW} is obtained by using the following Lemma. 
 
 \begin{lemma}
 \label{l.density}
  Let $X$ be a random variable taking values on a unit circle,  and $f:[0, 2\pi) \to [0,1]$ be its density function.  Suppose that $\forall a \in [0,2\pi)$,  we have
  \begin{equation}
  \label{e.den}
  f(a+\frac 12) f (a- \frac 12) \ge c f(a)^2
  \end{equation}
  Then the random variable has a bounded density on the circle,  and $ \int_0^{2\pi} e^{i\theta} f(\theta) \,d\theta \le 1-\eps$. Moreover, if there exist $t>1$ and $C<\infty$,  such that $\forall a,b \in [0,2\pi)$, 
  \begin{equation}
  \label{e.dent}
  f(a+b) f (a- b) \ge e^{-\frac{Cb^2}{t^2}} f(a)^2
  \end{equation}
  then the characteristic function bound can be improved to  $\int_0^{2\pi} e^{i\theta} f(\theta) \,d\theta \le  \frac{C}{t^2}$.
  \end{lemma}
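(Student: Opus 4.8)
Both \eqref{e.den} and \eqref{e.dent} are invariant under rotating $X$ (replacing $f(\cdot)$ by $f(\cdot-\theta_0)$), whereas $\int_0^{2\pi}e^{i\theta}f(\theta)\,d\theta$ gets multiplied by $e^{i\theta_0}$. So I may assume this integral is real and nonnegative, in which case $\int_0^{2\pi}e^{i\theta}f(\theta)\,d\theta=1-\int_0^{2\pi}(1-\cos\theta)f(\theta)\,d\theta$, and the task becomes: produce a lower bound for $\int_0^{2\pi}(1-\cos\theta)f(\theta)\,d\theta$ — of a fixed size $\eps>0$ under \eqref{e.den}, and of size $1-O(t^{-2})$ under \eqref{e.dent}. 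In both cases this will follow once one controls how concentrated $f$ can be, so the real content is an $L^\infty$-type bound on $f$.

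\textbf{The engine: $\log f$ plus a quadratic is convex.} Write $g:=\log f$ (finite a.e.\ since $f\ge0$, $\int f=1$). Condition \eqref{e.dent} reads $g(a+b)+g(a-b)-2g(a)\ge -Cb^2/t^2$, so setting $h(\theta):=g(\theta)+\tfrac{C}{2t^2}\theta^2$ (a $2\pi$‑periodic function plus a quadratic, viewed on $\R$) one gets $h(a+b)+h(a-b)-2h(a)\ge 0$ for all $a,b$; hence $h$ is midpoint‑convex, and being measurable it is convex on $\R$ (in particular finite everywhere, so $f>0$ everywhere). Since $g$ is $2\pi$‑periodic, $h(-2\pi)=h(2\pi)=h(0)+\tfrac{2\pi^2C}{t^2}$, and convexity of $h$ on $[-2\pi,2\pi]$ (a convex function lies below the chord through its endpoints and above the linear extension of a one‑sided chord) gives $|h(\theta)-h(0)|\le\tfrac{2\pi^2C}{t^2}$ on $[0,2\pi]$, hence $\osc_{[0,2\pi]}g\le\tfrac{6\pi^2C}{t^2}$. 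Therefore, for $t$ large, $f=e^{g}$ lies within a factor $1+O(t^{-2})$ of the uniform density $\tfrac1{2\pi}$ (using $2\pi\inf f\le 1\le 2\pi\sup f$), so $\|f-\tfrac1{2\pi}\|_{L^\infty}=O(t^{-2})$ and $\bigl|\int_0^{2\pi}e^{i\theta}f(\theta)\,d\theta\bigr|=\bigl|\int_0^{2\pi}e^{i\theta}(f(\theta)-\tfrac1{2\pi})\,d\theta\bigr|\le 2\pi\|f-\tfrac1{2\pi}\|_{L^\infty}=O(t^{-2})$ (for small $t$ the bound $C/t^2\ge1$ is trivial).

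\textbf{The qualitative bound from \eqref{e.den}.} Now only a single‑scale inequality $g(a+\tfrac12)+g(a-\tfrac12)-2g(a)\ge\log c$ is available, so put $h(\theta):=g(\theta)+2|\log c|\,\theta^2$; this $h$ is midpoint‑convex at scale $\tfrac12$. Iterating \eqref{e.den} in the form $f(a+\beta)f(a-\beta)\ge\gamma f(a)^2\Rightarrow f(a+2\beta)f(a-2\beta)\ge\gamma^4 f(a)^2$, and noting that the accumulated defect at scale $\beta$ grows like $\beta^2$, one checks that the \emph{same} $h$ is midpoint‑convex at every scale $2^{j}/2$, $j\ge0$. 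Using the irrationality of $\tfrac12/(2\pi)$, which makes these scales reach a dense subset of the circle, together with continuity (measurability) of $g$, one promotes $h$ to a genuinely convex function and, exactly as above, obtains $\osc_{[0,2\pi]}g\le C(c)$. Hence $c'\le f\le C'$ with $c',C'$ depending only on $c$, so $f$ is bounded; and a density bounded by $C'$ cannot concentrate, $\int_{\{\cos\theta>1-\eta\}}f\le C'|\{\cos\theta>1-\eta\}|=O(C'\sqrt\eta)$, so taking $\eta$ a small constant gives $\int_0^{2\pi}(1-\cos\theta)f\ge\tfrac\eta2=:\eps$ and thus $\int_0^{2\pi}e^{i\theta}f(\theta)\,d\theta\le 1-\eps$.

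\textbf{Main obstacle.} The clean part is the quantitative step: once the change of variables $h=g+\tfrac{C}{2t^2}\theta^2$ is made, everything is a one‑line convexity computation. The delicate point is the passage from the single‑scale comparison \eqref{e.den} to a uniform bound on $\|f\|_{L^\infty}$: since $\tfrac12/(2\pi)$ is irrational one cannot simply march around the circle in steps of $\tfrac12$, and the bootstrap to a dense family of scales (plus the regularity needed to close it) has to be handled carefully. In the intended application this is sidestepped, because the relevant Mermin--Wagner shift is antipodal, so \eqref{e.den} reads $f(a+\pi)\asymp f(a)$ and the oscillation bound is elementary; and if one reads the hypothesis literally, with $f$ a density on the circle of circumference $2\pi$ normalized so that $f\le 1$, the boundedness assertion is vacuous and only the spreading estimate of the third paragraph is needed.
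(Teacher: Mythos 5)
Your argument for the quantitative bound $\int_0^{2\pi}e^{i\theta}f(\theta)\,d\theta\le C/t^2$ is correct and takes a genuinely different (and cleaner) route than the paper. You observe that \eqref{e.dent} is precisely midpoint-convexity of $h:=\log f+\frac{C}{2t^2}\theta^2$, upgrade this to convexity on $\R$ by measurability, and then play periodicity of $\log f$ against the chord bounds for $h$ on $[-2\pi,2\pi]$ to get $\osc_{[0,2\pi)}\log f=O(t^{-2})$, hence $\|f-\frac1{2\pi}\|_{L^\infty}=O(t^{-2})$, from which the characteristic-function bound is a one-line consequence. The paper's proof of the same statement proceeds differently: it begins by asserting that \eqref{e.dent} gives a uniform density-ratio bound $\sup f/\inf f\le e^{C/t^2}$, then slices the circle into $2m$ arcs, compares the masses of shifted arcs, pairs each arc with its antipode to exploit cancellation in $e^{i\theta}$, and sums over $k$ with an optimized $m\sim t^2$. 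Your $L^\infty$ estimate subsumes all of that, and in fact it is the missing justification for the paper's opening density-ratio claim, which is not an immediate consequence of \eqref{e.dent} without the log-convexity observation (or some equivalent).

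Two caveats. First, the qualitative bound $\le 1-\eps$ under \eqref{e.den}: your bootstrap has a genuine gap, which you yourself flag. From the single-scale hypothesis $g(a+\tfrac12)+g(a-\tfrac12)-2g(a)\ge\log c$ one does obtain, by telescoping the discrete second difference, the bound $g(a+\tfrac k2)+g(a-\tfrac k2)-2g(a)\ge -k^2|\log c|$ for every integer $k$, so $h:=g+2|\log c|\,\theta^2$ is midpoint-convex along the arithmetic progression $\tfrac12\Z$; but midpoint-convexity restricted to a single lattice of increments gives no control of $g$ \emph{inside} intervals of length $<\tfrac12$, and measurability alone does not promote this to genuine convexity. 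The paper never proves this part either — it cites \cite{MW,Pf,PS} — so you are not at a disadvantage there, but the third paragraph as written is not a proof. Second, the remark that ``in the intended application the Mermin--Wagner shift is antipodal'' is incorrect: in Section~\ref{s.MW} the deformation $\tau$ shifts the increment $\phi(x_k)-\phi(x_{k-1})$ by $\pm b$ with $b=\tfrac12$, so the application does invoke \eqref{e.den} with the incommensurate step $\tfrac12$ (and then \eqref{e.dent} after rescaling), not with $\pi$. The difficulty you identify is therefore real in the application, not merely an artifact of reading the hypothesis literally.
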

  
  We first prove Lemma \ref{l.MW} based on Lemma \ref{l.density}.  The argument presented below follows closely a Mermin-Wagner type estimate which has been done in e.g., \cite{MW, Pf,  PS}. Denote by $x_k = (2^k,0)$ for $k = 0,\cdots , \log N$. And 
  \begin{equation}
  \mathcal F_k := \sigma(\phi(x) -\phi(y): |x_1|+|x_2|= 2^k \text{ and } |y_1|+|y_2|= 2^k ).
   \end{equation}
   In other words,  $\mathcal F_k$ specifies all the gradients of the field on the boundary of a diamond of radius $2^k$.  A key observation is that conditioned on $\mathcal F_k$, the gradients of the field inside the region $|x_1|+|x_2|< 2^k$ and the gradients in the region  $|x_1|+|x_2|> 2^k$ are independent.   By progressively conditioning the gradients on the layers $\mathcal F_k$, we have 
    \begin{equation}
    \label{e.condMW}
    \left\langle \exp\left( is \phi(0) \right) \right\rangle_{\mu_N} 
    = \left\langle \prod_{k=0}^{\log N} \E\left[\exp\left( is (\phi(x_k) - \phi(x_{k-1}) \right)
    |   \left( \mathcal F_k \right)_{0\le k\le \log N} \right] \right\rangle_{\mu_N} 
     \end{equation}
     
     Thus \eqref{e. MW} follows if we can show there exist $\eps>0$ and $C<\infty$,  such that  $\forall k = 1, \cdots, \log N$, and $\forall t>1$,  
       \begin{equation}
       \label{e.condfk}
       E\left[\exp\left( it (\phi(x_k) - \phi(x_{k-1}) \right)
    |    \left( \mathcal F_k \right)_{0\le k\le \log N}  \right] \leq 
    \min\{1- \eps, \frac{C}{t^2} \}.
     \end{equation}

We show \eqref{e.condfk} using a Mermin-Wagner type argument. Define the deformation $\forall b \in (0,2\pi]$,
  \begin{equation}
  \tau(x) := \left\{
\begin{aligned}
& b, &&  |x_1|+|x_2| \le 2^{k-1}, 
\\ & 
b\left(1- \frac{ |x_1|+|x_2| }{2^k}\right), && 2^{k-1} \le |x_1|+|x_2| \le 2^k,\\
&0, && |x_1|+|x_2| \ge 2^k
\end{aligned} 
\right.  
   \end{equation}
    which interpolates between $b$ and $0$, and does not change the gradients for all  $  \left( \mathcal F_k \right)_{0\le k\le \log N}  $. We also notice that a straightforward calculation yields $\sum_{e \in \mathcal E(Q_N) }(\nabla \tau(e))^2 \le Cb^2$ for some $C<\infty$. 
    
    Let $\phi^+:= \phi+\tau$ and $\phi^-:= \phi-\tau$. We see the densities of $\phi^+$ and $\phi^-$  (conditioning on all the gradients $\left( \mathcal F_k \right)_{0\le k\le \log N} $) satisfies 
    \begin{multline}
    \label{e.denperturb}
    g(\phi^+) g(\phi^-) = \frac{1}{Z_N^2} \exp\left( -\sum_{(x,y) \in \mathcal E(Q_N)} V(\phi(x) - \phi(y) + \tau(x) - \tau(y)) \right) \\ \cdot\exp\left( - \sum_{(x,y) \in \mathcal E(Q_N)} V(\phi(x) - \phi(y) - \tau(x) +\tau(y))\right) \\
    \geq
    \frac{1}{Z_N^2} \exp\left( -2\sum_{(x,y) \in \mathcal E(Q_N)} V(\phi(x) - \phi(y)) - \frac 12 \sup_{x\in\R} V''(x) \sum_{e\in \mathcal E(Q_N)}  (\nabla \tau(e))^2 \right)
    \geq 
    e^{-Cb^2} g(\phi)^2
    \end{multline}
    for some $c>0$.  
    
    For any set $E\subset \Omega_0(Q_N)$ of field configurations,  integrate the above inequality of densities over $E$ and applying the Cauchy-Schwarz inequality,  we obtain 
    \begin{equation}
    \label{e.prob}
    \P\left( \phi^+ \in E |  \left( \mathcal F_k \right)_{0\le k\le \log N}  \right) 
    \cdot \P\left( \phi^- \in E |  \left( \mathcal F_k \right)_{0\le k\le \log N}  \right) 
    \ge
    e^{-Cb^2} \P\left( \phi \in E |  \left( \mathcal F_k \right)_{0\le k\le \log N}  \right)^2.
     \end{equation}
    Set $t=1$ and $b= \frac 12$.  If we denote by $f$ the conditional density of $\exp\left( i (\phi(x_k) - \phi(x_{k-1}) \right)$ on the unit circle (conditioning on all the gradients $\left( \mathcal F_k \right)_{0\le k\le \log N} $) ,  then it follows that \eqref{e.den} is satisfied (for $t=1$). 
    
    To see that \eqref{e.dent} holds for $t>1$,  notice the fact that for $t $ large, we may rescale $\phi \to t\phi$, so that the corresponding potential is given by $V(\frac{\cdot}{t})$, which has second derivative of order $t^{-2}$. Thus following the same argument as above,  one obtains an  improved bound for the conditional densities (for large $t$),
     \begin{equation}
     g(\phi^+) g(\phi^-) \geq e^{-\frac{Cb^2}{t^2}}g(\phi)^2
       \end{equation}
Therefore by applying Lemma \ref{l.density} we conclude that 
  \begin{equation*}
     \E\left[\exp\left( it (\phi(x_k) - \phi(x_{k-1}) \right)
    |    \left( \mathcal F_k \right)_{0\le k\le \log N}   \right] \leq 
    \min\{1- \eps, \frac C{t^2}\}
     \end{equation*}
     And by the conditioning \eqref{e.condMW} we conclude Lemma \ref{l.MW}. 

Finally we give a proof of Lemma \ref{l.density} .

\begin{proof}[Proof of Lemma \ref{l.density} ]
We focus on the proof of $\int_0^{2\pi} e^{i\theta} f(\theta) \,d\theta \le  \frac{C}{t^2}$ as the $1-\eps$ bound is a classical result and its proof can be found in \cite{MW, Pf, PS}.  Using \eqref{e.dent}, we see that the probability density has a ratio bounded by $e^{C/t^2}$ uniformly over the circle .  

Thus for all $x \in [0,2\pi)$ and any interval $I \subset [0,2\pi)$ of length at most $\pi$, 
\begin{multline*}
\left(  \fint_{I} f(\theta)\, d\theta \right)^2 e^{-\frac{C}{t^2}}
\leq 
\fint_{I} f(\theta+x) f(\theta-x) \, d\theta
\leq
\fint_{I} f(\theta+x) e^{\frac{C}{t^2}}\fint_I f(\phi-x) \, d\phi \, d\theta\\
\leq e^{\frac{C}{t^2}} \left( \fint_{I+x} f(\theta)\, d\theta \right)
  \left( \fint_{I-x} f(\theta)\, d\theta \right),
 \end{multline*}
  thus
\begin{equation*}
\left(  \int_I f(\theta)\, d\theta \right)^2 e^{-\frac{2C}{t^2}}
\leq  \left( \int_{I+x } f(\theta)\, d\theta \right)
  \left( \int_{I-x } f(\theta)\, d\theta \right)
 \end{equation*}
 In particular,  fix $t\in\R$,  by taking $I=[(k-1)\pi/m, k\pi/m)$, for some $m\in \N$, and $k=1, \cdots 2m$  we have 
 \begin{equation*}
 \left(\max_{k=1, \cdots 2m}  \int_{(k-1)\pi/m}^{k\pi/m} f(\theta)\, d\theta \right)^2 
 \leq
 e^{\frac{2C}{t^2}}
 \left(\min_{k=1, \cdots 2m}  \int_{(k-1)\pi/m}^{k\pi/m} f(\theta)\, d\theta \right)
 \left(\max_{k=1, \cdots 2m}  \int_{(k-1)\pi/m}^{k\pi/m} f(\theta)\, d\theta \right)
 \end{equation*}
 Thus
 \begin{equation*}
 \left(\max_{k=1, \cdots 2m}  \int_{(k-1)\pi/m}^{k\pi/m} f(\theta)\, d\theta \right)
 \leq
 e^{\frac{2C}{t^2}}
 \left(\min_{k=1, \cdots 2m}  \int_{(k-1)\pi/m}^{k\pi/m} f(\theta)\, d\theta \right)
 \end{equation*}
 Therefore for each $k$,  we have 
  \begin{multline*}
  \left|  \int_{\frac{(k-1)\pi}m}^{\frac{k\pi}m} e^{i\theta} f(\theta)\, d\theta  + 
   \int_{\pi+ \frac{(k-1)\pi}m}^{\pi+ \frac{k\pi}m} e^{i\theta} f(\theta)\, d\theta \right|
   \leq
    \left| e^{i\frac{k\pi}m} \int_{\frac{(k-1)\pi}m}^{\frac{k\pi}m}  f(\theta)\, d\theta 
    - e^{i\frac{k\pi}m} \int_{\pi+ \frac{(k-1)\pi}m}^{\pi+\frac{k\pi}m}  f(\theta)\, d\theta \right|\\
    + \max_{\theta\in [\frac{(k-1)\pi}m, \frac{k\pi}m]} |e^{i\frac{(k-1)\pi}m}-e^{i\frac{k\pi}m}|
     \left| \int_{(\frac{(k-1)\pi}m}^{\frac{k\pi}m}  f(\theta)\, d\theta  + \int_{\pi+ \frac{(k-1)\pi}m}^{\pi+ \frac{k\pi}m}  f(\theta)\, d\theta \right|
     \\
     \leq 
    \left(1- e^{-\frac{2C}{t^2}} +O(\frac 1m) \right) 
     \left| \int_{\frac{(k-1)\pi}m}^{\frac{k\pi}m}  f(\theta)\, d\theta  + \int_{\pi+ \frac{(k-1)\pi}m}^{\pi+ \frac{k\pi}m}  f(\theta)\, d\theta \right| +O(\frac 1 {m^4t^2})
  \end{multline*}
  
  Summing over $k$ yields
   \begin{equation*}
   \left| \int_0^{2\pi} e^{i\theta} f(\theta) \,d\theta \right|
    \le 
    1- e^{-\frac{2C}{t^2}} +O(\frac 1m)
    \end{equation*}
    by taking $m > \frac 1C t^2$ we conclude the lemma. 
\end{proof}

\appendix 
\section{Multiscale Poincare inequality,}

We state a discrete version of the multiscale Poincare inequality, which provides an estimate of the $H^{-1}\left( \cu_n \right) $ norm of a function in terms of its spatial averages in triadic subcubes.

\begin{proposition}[\cite{AKM, AW}]
\label{p.MP}({Multiscale Poincare inequality}) Let $\mathcal{Z}_{n}=3^{n}\mathbb{Z}%
^{2}\cap \cu _{m}$. Then%
\begin{equation*}
\left\| f\right\|_{H^{-1}( \cu_{m}) }
\leq
C \left\| f\right\|_{\underline{L}^{2}(\cu_{m})}
+C\sum_{n=0}^{m-1}3^{n}\left( 
\frac1{\left|\mathcal{Z}_{n}\right|}
\sum_{y\in \mathcal{Z}_{n}}
 \left| \left( f\right) _{y+\cu _{n}}\right|^{2}  \right)^{1/2}.
\end{equation*}
\end{proposition}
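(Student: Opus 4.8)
The plan is to prove Proposition~\ref{p.MP} by the triadic telescoping argument of \cite{AKM, AKMBook} (all norms below understood in the volume-normalized sense). By duality it suffices to bound $\frac1{|\cu_m|}\sum_{x\in\cu_m}f(x)g(x)$ for an arbitrary test function $g$ with $\|g\|_{\underline H^1(\cu_m)}\le 1$. For each scale $0\le n\le m$ I would introduce the scale-$3^n$ piecewise-constant approximation $[g]_n$ of $g$: on each triadic cube $y+\cu_n$ with $y\in\mathcal Z_n$, set $[g]_n\equiv (g)_{y+\cu_n}$. Then $[g]_m=(g)_{\cu_m}$, so one has the telescoping identity
\begin{equation*}
g=(g)_{\cu_m}+\sum_{n=0}^{m-1}\bigl([g]_n-[g]_{n+1}\bigr)+\bigl(g-[g]_0\bigr),
\end{equation*}
and it remains to pair each term on the right with $f$.

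For the constant term, Cauchy--Schwarz gives $|(f)_{\cu_m}|\le\|f\|_{\underline L^2(\cu_m)}$ and $|(g)_{\cu_m}|\le\|g\|_{\underline L^2(\cu_m)}\le1$, producing exactly the first term on the right-hand side of the claimed inequality; the unit-scale residual $g-[g]_0$ is likewise absorbed into $C\|f\|_{\underline L^2(\cu_m)}$ (on each unit-scale cube it has mean zero, so one pairs $f-(f)_{y+\cu_0}$ against $g-(g)_{y+\cu_0}$ and uses the $O(1)$-scale Poincar\'e inequality plus bounded overlap of the cubes). For a scale-$n$ increment with $0\le n<m$, I would fix a parent cube $z+\cu_{n+1}$ ($z\in\mathcal Z_{n+1}$), on which $[g]_{n+1}\equiv (g)_{z+\cu_{n+1}}$ while $[g]_n\equiv (g)_{y+\cu_n}$ on each of its $3^d$ child cubes $y+\cu_n$. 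Pairing with $f$ and summing over children, the contribution of $z+\cu_{n+1}$ becomes $\frac{|\cu_n|}{|\cu_m|}\sum_y\bigl((g)_{y+\cu_n}-(g)_{z+\cu_{n+1}}\bigr)(f)_{y+\cu_n}$; the difference of nested averages is controlled by the discrete Poincar\'e inequality on $z+\cu_{n+1}$, namely $|(g)_{y+\cu_n}-(g)_{z+\cu_{n+1}}|\le C3^{n}\|\nabla g\|_{\underline L^2(z+\cu_{n+1})}$. Summing over $z\in\mathcal Z_{n+1}$, applying Cauchy--Schwarz in $y\in\mathcal Z_n$, and using that each parent has $3^d$ children so that $\sum_{y\in\mathcal Z_n}\|\nabla g\|_{\underline L^2(z(y)+\cu_{n+1})}^2\le C\tfrac{|\cu_m|}{|\cu_{n+1}|}\|\nabla g\|_{\underline L^2(\cu_m)}^2\le C\tfrac{|\cu_m|}{|\cu_{n+1}|}$ (here $z(y)$ is the parent of $y$), together with the volume counts $|\cu_n|\sim3^{dn}$, $|\cu_m|\sim3^{dm}$, $|\mathcal Z_n|\sim3^{d(m-n)}$, one is left with
\begin{equation*}
\Bigl|\frac1{|\cu_m|}\sum_{x\in\cu_m}f(x)\bigl([g]_n-[g]_{n+1}\bigr)(x)\Bigr|\le C\,3^{n}\Bigl(\frac1{|\mathcal Z_n|}\sum_{y\in\mathcal Z_n}\bigl|(f)_{y+\cu_n}\bigr|^2\Bigr)^{1/2}.
\end{equation*}
Summing over $n=0,\dots,m-1$ and combining with the constant-term and residual bounds yields the proposition.

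I expect the main obstacle to be bookkeeping rather than conceptual: getting the exponent of $3$ in the scale-$n$ estimate to land \emph{exactly} on $3^n$ requires carefully balancing the gain $3^n$ from Poincar\'e and the $3^d$-fold child/parent overcounting against the volume normalizations $|\cu_n|\sim3^{dn}$ and $|\mathcal Z_n|\sim3^{d(m-n)}$. The two remaining routine points are the discrete Poincar\'e inequality on triadic cubes (taken as standard), and the treatment of cubes $y+\cu_n$ with $y\in\mathcal Z_n$ near $\partial\cu_m$ that protrude from $\cu_m$, which is handled by truncating them to $\cu_m$ exactly as in \cite{AKM, AKMBook}; neither affects the final form of the inequality.
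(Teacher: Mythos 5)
Your proof is correct and follows the standard triadic telescoping argument of \cite{AKM, AKMBook}, which the paper cites for this proposition without giving its own proof. The only cosmetic point is that your duality argument controls the volume-normalized $\underline H^{-1}(\cu_m)$ norm (which is also how the paper actually invokes the proposition in the proof of Lemma~\ref{l.convergenceflux}), so the un-underlined $H^{-1}$ in the statement should be read in that normalization.
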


We also record a lemma here that the $\underline H^{-1}$ norm of $\nabla u$ can bound the $L^2$ oscillation of $u$.

\begin{lemma}
\label{l. H-1}
There exists $C(d) <\infty$ such that for every $m\in\N$ and $u\in H^1(\cu_m, \mu_N)$,
\begin{equation*}
\left\| u - (u)_{\cu_m}\right\|_{\underline L^2(\cu_m, \mu_N)} 
\leq
C \left\| \nabla u\right\|_{\underline H^{-1}(\cu_m, \mu_N)} 
\end{equation*}
And, for every $u \in \in H_0^1(\cu_m,\mu_N)$,
\begin{equation*}
\left\| u \right\|_{\underline L^2(\cu_m, \mu_N)} 
\leq
C \left\| \nabla u\right\|_{\underline H^{-1}(\cu_m, \mu_N)} 
\end{equation*}
\end{lemma}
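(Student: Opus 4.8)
The plan is to prove this by the standard duality argument for negative-order Poincaré inequalities, in the spirit of \cite{AKMBook}, adapted to the present $\phi$-dependent (Witten-Laplacian) setting. The key structural remark is that the discrete Laplacian acts only on the spatial variable and commutes with the derivations $\partial_y$ from \eqref{e.partial}; hence in all the spatial PDEs below the configuration $\phi$ enters merely as a frozen parameter, and one first runs the deterministic spatial estimate with $\phi$ fixed and then integrates against $\mu_N$.

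For the second inequality, take $u\in H^1_0(\cu_m,\mu_N)$ and, for each $\phi$, let $v(\cdot,\phi)$ solve the discrete Dirichlet problem $-\Delta v(\cdot,\phi)=u(\cdot,\phi)$ in $\cu_m^\circ$ with $v(\cdot,\phi)=0$ on $\partial\cu_m$, that is $v(x,\phi)=\sum_{z\in\cu_m^\circ}G_{\cu_m}(x,z)\,u(z,\phi)$. Since $v$ is a fixed (i.e.\ $\phi$-independent) linear combination of the $u(z,\cdot)$, we get $v\in H^1_0(\cu_m,\mu_N)$ and $\partial_y v(\cdot,\phi)$ solves the same equation with source $\partial_y u(\cdot,\phi)$. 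Discrete integration by parts and the duality definition of the $\underline{H}^{-1}$ norm then give
\begin{equation*}
\frac1{|\cu_m|}\sum_{x\in\cu_m^\circ}\big\langle u(x,\cdot)^2\big\rangle_{\mu_N}
=\frac1{|\cu_m|}\sum_{e\in\mathcal E(\cu_m)}\big\langle\nabla u(e,\cdot)\,\nabla v(e,\cdot)\big\rangle_{\mu_N}
\leq \big\|\nabla u\big\|_{\underline H^{-1}(\cu_m,\mu_N)}\,\big\|v\big\|_{\underline H^1(\cu_m,\mu_N)},
\end{equation*}
so it remains to bound $\|v\|_{\underline H^1(\cu_m,\mu_N)}$ by $C\|u\|_{\underline L^2(\cu_m,\mu_N)}$: testing the equation by $v$ and applying the classical discrete Poincaré inequality on the cube $\cu_m$ controls the gradient term, the derivative term by applying the same to $\partial_y v$, with the powers of the diameter lost in Poincaré being exactly those carried by the volume-normalized norms. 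Dividing by $\|u\|_{\underline L^2(\cu_m,\mu_N)}$ gives the claim. For the first inequality one runs the identical argument with $u$ replaced by $\bar u:=u-(u)_{\cu_m}$, which has zero spatial mean for each $\phi$ and satisfies $\nabla\bar u=\nabla u$; here the relevant Poisson problem is taken with Neumann data (so the integration by parts generates no boundary term), or equivalently with Dirichlet data on a cube slightly larger than $\cu_m$, the resulting boundary layer being of relative size $O(3^{-m})$ and hence absorbable.

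The argument is otherwise routine; the one point genuinely requiring care is the bookkeeping of the volume normalizations — the $|\cu_m|^{-1}$ prefactors and the diameter weights implicit in $\underline L^2$, $\underline H^1$ and $\underline H^{-1}$ — so that the final constant depends on $d$ alone and not on $m$. This is precisely the step where one invokes the scaling of the discrete Green's function $G_{\cu_m}$ and of the Poincaré constant on $\cu_m$ with the diameter $3^m$; combined with the immediate observation that the spatial Poisson solution is a fixed linear combination of the $u(z,\cdot)$ and therefore both lies in the correct space and commutes with the $\partial_y$, this completes the proof.
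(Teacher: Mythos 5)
The duality step is where the gap lies. After writing $\|u\|_{\underline{L}^2}^2=\frac1{|\cu_m|}\sum_{e}\langle\nabla u\,\nabla v\rangle_{\mu_N}$, you bound this by $\|\nabla u\|_{\underline{H}^{-1}}\|v\|_{\underline{H}^1}$ and claim $\|v\|_{\underline{H}^1}\leq C\|u\|_{\underline{L}^2}$ with $C$ independent of $m$; this is false. From $v=G_{\cu_m}u$, testing the Poisson equation and applying Poincar\'e on $\cu_m$ gives only $\|\nabla v\|_{\underline{L}^2}\leq C\,3^m\|u\|_{\underline{L}^2}$ and $\|v\|_{\underline{L}^2}\leq C\,3^{2m}\|u\|_{\underline{L}^2}$. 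The powers of $3^m$ do \emph{not} cancel with the volume normalizations: the $|\cu_m|^{-1/2}$ factor sits identically on both sides of every estimate and is inert, whereas the Poincar\'e constant on $\cu_m$ is genuinely of order $3^m$. As written, the argument therefore produces a constant that grows like $3^{2m}$, defeating the statement.

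What is actually needed --- and what the proof of the analogous continuum statement in \cite{AKMBook} does --- is to pair $\nabla u$ against the edge-valued test function $\nabla v$ rather than against $v$, so that the right-hand side becomes $\|\nabla v\|_{\underline{H}^1}$, together with the convention of \cite{AKMBook} in which the $\underline{H}^1$ norm carries a $|U|^{-1/d}$ weight on its zeroth-order term. With that weighting $\|\nabla v\|_{\underline{H}^1}$ is comparable to $3^{-m}\|\nabla v\|_{\underline{L}^2}+\|\nabla\nabla v\|_{\underline{L}^2}$; the first term is controlled by $\|u\|_{\underline{L}^2}$ via the Poincar\'e estimate above, while the second requires the discrete $L^2$ Calder\'on--Zygmund bound $\|\nabla\nabla v\|_{\underline{L}^2}\leq C\|\Delta v\|_{\underline{L}^2}=C\|u\|_{\underline{L}^2}$ for the cube (immediate from the discrete sine expansion), which your proof does not invoke. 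Note that without the $|U|^{-1/d}$ weight the inequality itself fails: take $u(x)=\sin(\pi x_1/3^m)$, then $\|u\|_{\underline{L}^2}\simeq 1$ but $\|\nabla u\|_{\underline{H}^{-1}}\to 0$ as $m\to\infty$ if the dual test functions are only constrained to have unit $\underline{L}^2$ size. Your handling of the $\partial_y$-derivative part of the norms by linearity is fine, and the Neumann variant for the mean-free statement is also acceptable; the gap is specifically in the spatial scaling of the duality step and the absent second-order elliptic estimate.
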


        \subsection*{Acknowledgments}
We thank Scott Armstrong,  Ron Peled,  Tom Spencer and Ofer Zeitouni for helpful discussions, and Ofer Zeitouni for helpful comments on a previous draft of this manuscript. The research was partially supported by the National Key R\&D Program of China and a grant from Shanghai Ministry Education Commission.

        \small
\bibliographystyle{abbrv}
\bibliography{gradphi}

\end{document}